\documentclass[a4paper,10pt,fleqn]{amsart}

\usepackage[utf8]{inputenc}
\usepackage[T1]{fontenc}
\usepackage{amsmath}
\usepackage{amsfonts}
\usepackage{amsthm}
\usepackage{amssymb}
\usepackage{bbm}
\usepackage[colorlinks,breaklinks=true]{hyperref}
\usepackage{appendix}
\usepackage{aliascnt}
\usepackage{marginnote}

\usepackage{tikz,pgfplots,enumerate}
\pgfplotsset{compat=1.11}
\usetikzlibrary{calc,angles,quotes}
\tikzset{help lines/.style=very thin}
\tikzset{Grid/.style={help lines,color=gray!20}}

\makeatletter

\AtBeginDocument{%
  \@ifpackageloaded{refcheck}{%
    \@ifundefined{hyperref}{}{%
      \let\T@ref@orig\T@ref
      \def\T@ref#1{\T@ref@orig{#1}\wrtusdrf{#1}}%
      \let\@refstar@orig\@refstar
      \def\@refstar#1{\@refstar@orig{#1}\wrtusdrf{#1}}%
      \DeclareRobustCommand\ref{\@ifstar\@refstar\T@ref}%
    }%
  }{}%
}

\makeatother

\title[On fully nonlinear mean field games]%
{On fully nonlinear parabolic mean field games with 
nonlocal and local diffusions}

\author{Indranil Chowdhury}
\address{\parbox{.8\linewidth}{{\normalfont\textbf{I.~Chowdhury}}\medskip\\
Indian Institute of Technology - Kanpur,\\
Department of Mathematics and Statistics, \\
Kalyanpur, Kanpur - 208016, India\medskip}}
\email{indranil@iitk.ac.in}

\author{Espen R.~Jakobsen}
\address{\parbox{.8\linewidth}{{\normalfont\textbf{E.~R.~Jakobsen}}\medskip\\
Institutt for matematiske fag, NTNU,\\
7491 Trondheim, Norway\medskip}}
\email{espen.jakobsen@ntnu.no}

\author{Mi\l{}osz Krupski}
\address{\parbox{.8\linewidth}{{\normalfont\textbf{M.~Krupski}}\medskip\\
Instytut Matematyczny, Uniwersytet Wroc\l{}awski,\\
pl.~Grunwaldzki 2/4, 50-384 Wroc\l{}aw, Poland\medskip\\
Prirodoslovno--matemati\v{c}ki fakultet, Sveu\v{c}ili\v{s}te u Zagrebu,\\
Horvatovac 102a, 10000 Zagreb, Croatia\medskip}}
\email{milosz.krupski@uwr.edu.pl}

\makeatletter
\@namedef{subjclassname@2020}{%
  \textup{2020} Mathematics Subject Classification}
\makeatother

\date{\today}
\subjclass[2020]{35A01, 35A02, 35D30, 35D40, 35K55, 35K65, 35Q84, 35Q89, 35R09, 47D07, 49L, 49N80, 60G51}	
\keywords{Mean field games, Fokker--Planck--Kolmogorov equation, Hamilton--Jacobi--Bell\-man equation,
fully-nonlinear PDEs, 
nonlocal PDEs, Brownian motion, L\'evy processes,
controlled diffusion, existence, uniqueness}


%
\let\OLDenum\enumerate
\renewcommand\enumerate{\vspace{0.25\baselineskip}\OLDenum\setlength{\itemsep}{0.5\baselineskip}}
\newenvironment{description*}%
  {\vspace{0.25\baselineskip}\begin{description}
    \setlength{\itemsep}{0.5\baselineskip}
    \setlength{\mathindent}{1.5\leftmargin}
  }
  {\end{description}}
   
\makeatletter
\let\orgdescriptionlabel\descriptionlabel
\renewcommand*{\descriptionlabel}[1]{%
 \let\orglabel\label
 \let\label\@gobble
 \phantomsection
 \edef\@currentlabel{#1}%
 \let\label\orglabel
 \orgdescriptionlabel{#1}%
}
\makeatother

\newtheorem{theorem}{Theorem}[section]
\newaliascnt{lemma}{theorem}\newtheorem{lemma}[lemma]{Lemma}\aliascntresetthe{lemma}
\newaliascnt{corollary}{theorem}\newtheorem{corollary}[corollary]{Corollary}\aliascntresetthe{corollary}
\newaliascnt{proposition}{theorem}\newtheorem{proposition}[proposition]{Proposition}\aliascntresetthe{proposition}
\newaliascnt{conjecture}{theorem}\aliascntresetthe{conjecture}
\theoremstyle{remark}
\newaliascnt{remark}{theorem}\newtheorem{remark}[remark]{Remark}\aliascntresetthe{remark}
\theoremstyle{definition}
\newaliascnt{definition}{theorem}\newtheorem{definition}[definition]{Definition}\aliascntresetthe{definition}

\def\equationautorefname~#1\null{problem~\upshape{(#1)}\null}
\def\partnautorefname~#1\null{Part\,(#1)\null}
\def\itemautorefname~#1\null{\,(#1)\null}

\newcounter{step}[theorem]
\newcounter{partn}[theorem]
\renewcommand{\thepartn}{\textit{\roman{partn}}}

\newcommand{\bulletdiam}{\raisebox{\dimexpr.5\fontcharht\font`S-.5\height}{$\diamond$}\ }
\newenvironment{proof*}{\begin{proof}\setcounter{step}{0}}{\end{proof}}
\newenvironment{step}{\refstepcounter{step}\bulletdiam
{\textit{Step \thestep}.}}
{\smallskip\par}
\newenvironment{step*}[1]{\refstepcounter{step}\bulletdiam {\textit{Step \thestep. #1}.}}
{\smallskip\par}
\newenvironment{part*}
{\refstepcounter{partn}\bulletdiam{\textit{Part {\rm (}\thepartn\,{\rm )}}.}}
{\smallskip\par}

\newcounter{parn}[remark]
\renewcommand{\theparn}{\textit{\alph{parn}}}
\newcommand*{\npar}{\refstepcounter{parn}\ifnum\value{parn}=1{\par({\theparn})\ }\else{\smallskip\par ({\theparn})\ }\fi}
\def\parnautorefname~#1\null{\,({#1})\null}

\DeclareMathOperator{\supp}{supp}
\DeclareMathOperator{\sgn}{sgn}
\DeclareMathOperator{\tr}{tr}
\DeclareMathOperator{\loc}{loc\,}
\DeclareMathOperator{\nloc}{nloc\,}

\DeclareMathOperator*\limp{\vphantom{p}lim}
\DeclareMathOperator{\Dom}{Dom}
\newcommand{\dt} {\partial_t}
\newcommand{\KHJ}{K_{\text{\textit{HJB}}}}
\newcommand{\FPm}{\mathcal{M}}
\newcommand{\R} {\mathbb{R}}
\newcommand{\N} {\mathbb{N}}
\newcommand{\X} {{\R^d}}
\newcommand{\fL}{\mathcal{L}}

\newcommand{\fLs}{\fL^*}
\newcommand{\PX}{\mathcal{P}(\X)}
\newcommand{\Mb}{\mathcal{M}_b(\X)}
\newcommand{\T}{\mathcal{T}}
\newcommand{\Tb}{\overline{\T}}
\newcommand{\rtc}{\theta}

\newcommand{\hd}{\alpha}

\newcommand{\hb}{\beta} 
\newcommand{\Holder}[1]{\mathcal{C}^{#1}}
\newcommand{\Hb}[1]{\Holder{#1}_b(\X)}
\newcommand{\HH}[3]{\mathcal{C}^{#1,#2}([0,t]\times B_{#3}(x))}
\newcommand{\HHb}[2]{\mathcal{C}^{#1,#2}_b(\T\times\X)}
\newcommand{\CPX}{C(\Tb,\PX)}
\newcommand{\CaPX}[1]{\Holder{#1}(\Tb,\PX)}
\newcommand{\CLX}{C(\Tb,L^1(\X))}
\newcommand{\LC}[1]{B(\T, \Hb{#1})}
\newcommand{\CC}[1]{C_b(\T, \Hb{#1})}

\newcommand{\Cb}{C_b(\T\times\X)}
\newcommand{\Cbb}{C_b(\Tb\times\X)}
\newcommand{\Cu}{C(\T\times\X)}
\newcommand{\D}{C_c^\infty(\Tb\times\X)}
\newcommand{\BUC}{\text{\textit{BUC}}(\X)}
\newcommand{\UC}{\text{\textit{UC}}}
\newcommand{\cF}{\mathfrak{f}}
\newcommand{\cG}{\mathfrak{g}}
\newcommand{\cD}{\mathcal{R}}
\newcommand{\cB}{\mathcal{B}}
\newcommand{\HJcD}{\mathcal{S}_{\text{\textit{HJB}}}}
\newcommand{\uT}{g}
\newcommand{\unT}[1]{g_{#1}}
\renewcommand{\epsilon}{\varepsilon}
\newcommand{\fLlow}{\fL_r}
\newcommand{\fLhigh}{\fL^r}

\setcounter{tocdepth}{2}

\makeatletter
\def\@tocline#1#2#3#4#5#6#7{\relax
  \ifnum #1>\c@tocdepth 
  \else
    \par \addpenalty\@secpenalty\addvspace{#2}%
    \begingroup \hyphenpenalty\@M
    \@ifempty{#4}{%
      \@tempdima\csname r@tocindent\number#1\endcsname\relax
    }{%
      \@tempdima#4\relax
    }%
    \parindent\z@ \leftskip#3\relax \advance\leftskip\@tempdima\relax
    \rightskip\@pnumwidth plus4em \parfillskip-\@pnumwidth
    #5\leavevmode\hskip-\@tempdima
      \ifcase #1
       \or\or \hskip 1em \or \hskip 2em \else \hskip 3em \fi%
      #6\nobreak\relax
    \hfill\hbox to\@pnumwidth{\@tocpagenum{#7}}\par
    \nobreak
    \endgroup
  \fi}
\makeatother

\begin{document}
\begin{abstract}
 We introduce a class of fully nonlinear mean field games posed in $[0,T]\times\X$.
 We justify that they are related to controlled local or nonlocal diffusions,
 and more generally in our setting, to a new control interpretation involving
 time change rates of stochastic (L\'evy) processes.
 The main results are existence and uniqueness of solutions under general assumptions.
 These results are applied to
 non-degenerate equations --- including both local second order and nonlocal with fractional Laplacians. 
 Uniqueness holds under  monotonicity of couplings and convexity of the Hamiltonian, but neither monotonicity nor convexity need to be strict.
 We consider a rich class of nonlocal operators and
 processes and develop tools to work in the whole space without explicit moment assumptions.
\end{abstract}

\maketitle

 \begin{section}{Introduction}
 In this paper we introduce a new model of \emph{mean field games} and analyse it using PDE methods.
  Mean field games are limits of $N$-player stochastic games as $N\to\infty$, under certain assumptions allowing for the mean field limit to exist. 
 The Nash equilibria are characterized by a coupled system of PDEs called the \emph{mean field game system}, where the value function of the generic player is given by a backward  Hamilton--Jacobi--Bellman equation and the distribution of players by a forward Fokker--Planck equation. 
 The mathematical theory of such problems was introduced by Lasry--Lions~\cite{MR2269875,MR2271747,MR2295621}
 and Huang--Caines--Malham\'e~\cite{MR2346927,MR2344101} in 2006,
 and today this is a large and rapidly expanding field.
 This research is mostly focused on either PDE or stochastic approaches.
 Extensive background and recent developments can be found in e.g.~\cite{MR4214773,MR3134900,MR3752669,MR3753660,MR3559742,MR3967062,
 MR2762362}
 and the references therein.

 In contrast to the more classical setting,
 we allow not only the drift of a stochastic process to be controlled but also the diffusion.
 To be more precise, the players control the time change rate of a L\'evy process. 
 If the (diffusion) process is self-similar like a Brownian motion or an $\alpha$-stable process,  this is equivalent to a classical controlled
 diffusion \cite{MR2179357,MR2322248} (see \autoref{section:stochastics} and \cite{IJK03b} for more details). 
In our setup the backward equation is fully nonlinear, and the system may be strongly degenerate and local or nonlocal. 
  Problems sharing some of these features have been addressed before.
 In~\cite{MR3399179} the authors allow for a degenerate diffusion,
 but it is not controlled and there are restrictions on its regularity, cf.~\cite{MR2607035,MR3391701}.
 There are recent results on mean field games with nonlocal (uncontrolled) diffusion
 involving L\'evy operators~\cite{MR3912635,MR3934106,MR4309434,MR4223351}.
 See also~\cite{MR3992040} for a problem involving fractional time derivatives.

  Control problems/games have many applications throughout the sciences, engineering, and economics.
Controlled diffusions \cite{MR2179357} appear e.g.~in portfolio optimization in finance, cf.~\cite{MR2178045, MR2380957, MR2533355}. Despite the many applications in economics \cite{MR2762362,MR3363751}, see also \cite{MR0172689}, control of the diffusion is a rare and novel subject in the context of mean field games. So far it has been addressed mostly by stochastic methods:  \cite{MR3332857} introduces an approach based on relaxed controls and martingale problems to show existence (without uniqueness) of probabilistic solutions to very general local mean field games, see further developments in e.g.~\cite{BT22}. Mean field games of controls are considered in \cite{djete2023mean}, and~\cite{MR4158808,MR3980873} consider extensions to problems perturbed by bounded nonlocal operators.
Some results by PDE methods can be found in \cite{Ricciardi}, as well as \cite{MR4361908, MR4702626}
for uniformly elliptic (stationary second order) problems. Except for \cite{Ricciardi}, there
seem to be no prior uniqueness results for fully nonlinear problems by any methods.

We focus on the case where the generic player has a single control, which addresses most of the novelties. We also explain how to include a separately controlled drift and hence the corresponding first-order terms in the PDEs.
To be precise, we mainly study derivation, existence, and uniqueness questions for the mean field game system 
 \begin{align}\label{eq:mfg}
  \left\{ 
  \begin{aligned} 
   -\dt u &= F(\fL u) +\cF(m)\quad&\text{on $\T\times\X$},\\
   &u(T)=\cG(m(T))\quad&\text{on $\X$},\\[0.2cm]
   \dt m & = \fLs(F'(\fL u)\,m)\quad&\text{on $\T\times\X$},\\
   &m(0)=m_0\quad&\text{on $\X$},
  \end{aligned}
  \right.
 \end{align}
 where $\T=(0,T)$ for a fixed $T>0$.
 We assume $\fL$ to be a L\'evy operator with triplet $(c,a,\nu)$,
 an infinitesimal generator of a L\'evy process (see \cite[\S2.1]{MR3156646}).
 The (formal) adjoint $\fLs$  of~$\fL$ is also a L\'evy operator. Typical examples are the Laplacian $\Delta$, $(c,a,\nu)=(0,I,0)$, the fractional Laplacian $-(-\Delta)^\sigma$, $(c,a,\nu)=(0,0,\bar c\,{|z|^{-d-2\sigma}}\,dz)$ for $\bar c>0$ and $\sigma\in(0,1)$, and tempered, nonsymmetric, and even degenerate elliptic operators. 
 We discuss more in \autoref{sec:enadu}.

 A semi-rigorous derivation of \autoref{eq:mfg} is given in \autoref{section:stochastics}, starting with a precise interpretation of the control problem for the generic player in terms of the time change rate of the L\'evy process.
 Our derivation leads to a Hamiltonian $F$ which is convex and non-decreasing, 
 an optimal feedback control $\theta^*=F'(\fL u)$, and 
 ultimately to the mean field game system \eqref{eq:mfg} which is then parabolic.
 It is coupled through the running and terminal costs $\cF$ and $\cG$ and the optimal feedback control. In this paper $\cF$ and $\cG$ are smoothing (nonlocal) couplings. 
 
 Our first objective is to study the well-posedness of \autoref{eq:mfg} with a (nearly) minimal 
 set of assumptions  \ref{a:F1}--\ref{a:fg2}, naturally arising from the analysis in \autoref{section:stochastics}. This is matched with one of the weakest solution concepts where the feedback control is well-defined: classical solutions $u$ and measure-valued distributional solutions $m$.
Reworking the mean field games arguments to fit our setting,  
we reduce the question of well-posedness to a set of general conditions \ref{R1}--\ref{U2} describing the properties of solutions of the uncoupled equations making up \autoref{eq:mfg}: solvability, stability, and regularity of the Hamilton--Jacobi--Bellman equation \eqref{eq:hjb}, and uniqueness of the Fokker--Planck equation \eqref{eq:fp} --- see below. 

To prove uniqueness, we impose monotonicity assumptions on the couplings. Improving on previous results, we need neither strict convexity of the Hamiltonian nor strict monotonicity of the couplings. 
Existence for \autoref{eq:mfg} holds under much weaker assumptions than uniqueness, in part because we need no uniqueness for 
the Fokker--Planck equation. 
We exploit the Kakutani--Glicksberg--Fan fixed point theorem (a generalization of the Schauder theorem), relying on a stability result for sets of solutions of the Fokker--Planck equation. It is based on new tightness arguments which require no moment assumptions in $\X$ on $m$ or $\nu$.\footnote{In the mean field game literature, $m$ is usually continuous in the Wasserstein $d_1$ distance and has two bounded moments, whereas here we work with the Rubinstein--Kantorovich (or bounded--Lipschitz) distance $d_0$ and no explicit moment bounds. The challenge is to preserve compactness.}
This approach is of independent interest and has already been exploited in  \cite{MR4309434,chowdhury2021numerical,jakobsen2023master}.

The second objective is to verify  the abstract conditions \ref{R1}--\ref{U2} and hence obtain well-posedness
in concrete cases. We consider
two non-degenerate problems --- local (\autoref{sec:ndeg-loc}) and nonlocal (\autoref{sec:ndeg-nloc}). In an upcoming paper \cite{IJK03b}, we also show that our findings can be applied to certain strongly degenerate problems. In \autoref{sec:extension} we formulate the results for a combination of controlled drift and diffusion where the mean field game system also includes the first-order terms 
  \begin{align}\label{eq:mfg-ext}
  \left\{ 
  \begin{aligned} 
   &-\dt u - H (\nabla u) - F(\fL u)=  \cF(m)&&\text{on $\T\times\X$},\\[0.1cm]
   & \quad u(T)=\cG(m(T))&&\text{on $\X$},\\[0.2cm]
   & \dt m + \, \text{div}\, (\nabla H(\nabla u)m) - \fLs(F'(\fL u)\,m)  = 0&&\text{on $\T\times\X$},\\[0.1cm]
   &\quad m(0)=m_0&&\text{on $\X$}.
  \end{aligned}
  \right.
 \end{align}

The main challenge is to get classical solvability and strong enough a priori regularity estimates for solutions of Hamilton--Jacobi--Bellman equations. 
In the local case, the key Schauder regularity \emph{and solvability} results are proved in~\cite{MR1139064}. 
In the nonlocal case, the Schauder estimates are proved in \cite{MR3803717}, but we could find no existence result in the literature. To show existence, we adapt the continuity method described in \cite{MR3837125}, using the a priori estimates of \cite{MR3803717}, and solvability for \emph{linear} nonlocal equations of \cite{MR3201992}.
In both non-degenerate cases, uniqueness for the Fokker--Planck equation can be deduced from existing results by adapting the Holmgren method.

To summarise, the main novelties of this paper are:
\setlength{\leftmargini}{2em}
\begin{enumerate}
\smallskip
 \item  The new model and well-posedness  results in \autoref{sec:enadu}.
\item The stochastic control interpretation of the Hamilton--Jacobi--Bellman equation (in terms of time-rate change);
a heuristic derivation of \autoref{eq:mfg} in \autoref{section:stochastics}. 
 \item A theory of mean field games in $\X$ without moment assumptions (see \autoref{subsec:other}); 
the technical results in Lemmas \ref{prop:lyapunov}, \ref{lemma:V-equiv}, and \ref{lemma:approx}.
\item Existence and stability for the Fokker--Planck equation with an arbitrary L\'evy operator and a non-negative continuous coefficient in  \autoref{sec:fpe}, and their use to prove \autoref{thm:mfg-existence}, existence for \autoref{eq:mfg}.
\item Uniqueness for \autoref{eq:mfg} without strong convexity of $F$
 or strict monotonicity of $\cF$, $\cG$; 
 the 
 second half of the proof (from \eqref{eq:supp}) of \autoref{thm:mfg-uniqueness}.
 \end{enumerate}\smallskip

The paper is organized as follows:
In \autoref{sec:enadu} we introduce assumptions, solution concepts, and 
the (concrete and general) well-posedness results for the fully nonlinear mean field game system \eqref{eq:mfg} 
along an extension to system \eqref{eq:mfg-ext} with controlled drift.
The derivation of PDEs from a stochastic model is given in \autoref{section:stochastics}. 
 \autoref{sec:prelim} discusses both the background material and new results that are needed in the proofs, including tightness and approximations of L\'evy operators.
 \autoref{sec:HJB} 
 contains results on Hamilton--Jacobi--Bellman equations, 
 and in \autoref{sec:fpe_main} 
 we discuss well-posedness for Fokker--Planck equations, including existence and stability of solutions under general assumptions.
 \autoref{sec:mfg} contains the proofs of the general existence and uniqueness results for \autoref{eq:mfg}
 and can be read independently.
 Some technical proofs and auxiliary results are given in the appendices.
\end{section}

\begin{section}{Main results}\label{sec:enadu}
We present our setting and the main results regarding well-posed\-ness for \autoref{eq:mfg}, and their extension to \autoref{eq:mfg-ext}. We also discuss the lack of moment assumptions for the initial data and the L\'evy process.
\subsection{Assumptions and solution concepts}\label{subsec:ass}
 A L\'evy measure $\nu$ is defined by
 \begin{align}\label{def:levy-symmetric} \nu \ \text{is a Radon measure on $\X\setminus\{0\}$,} \quad \nu\geq0, \quad
 \int_\X \big(1\wedge |z|^2\big)\,\nu(dz)<\infty.
 \end{align}
The representation formula for the L\'evy operator $\fL$ can the be given as:
\renewcommand{\thefootnote}{\fnsymbol{footnote}}
 \begin{description*}\vspace{0.33\baselineskip}
 \item[(L)\label{L:levy}]\hspace{-0.5em}\footnotemark[2] $\fL:C^2_b(\X)\to C_b(\X)$ is a~linear operator 
with a triplet $(c,a,\nu)$, where $c\in\X$, $a\in\R^{d\times d}$,
$\nu$ is a L\'evy measure \eqref{def:levy-symmetric}, and 
\begin{multline*}
 \fL \phi(x) = c\cdot\nabla\phi(x) + \tr\big(aa^T D^2\phi(x)\big) \\
 + \int_\X \Big(\phi(x+z)-\phi(x)-\mathbbm{1}_{B_1}(z)\,z\cdot \nabla\phi(x)\Big)\,\nu(dz).
\end{multline*}
\end{description*}

 Let $\PX$ be the set of probability measures on $\X$ equipped with the topology of weak convergence of measures.
 This topology can be metrised by the Rubinstein--Kantorovich norm $\|\cdot\|_0$  defined in \autoref{def:rubinstein}.

In \autoref{eq:mfg}, we then use the following assumptions:
 \begin{description*}\vspace{0.33\baselineskip}
  \item[(A1)\label{a:F1}]\hspace{-0.5em}\footnotemark[2]
   $F\in C^1(\R)$, $F'\in\Holder{\gamma}(\R)$ for $\gamma \in (0,1]$ (see \autoref{def:holder}), and $F'\geq 0$;
  \item[(A2)\label{a:F2}] $F$ is convex;     
  \item[(A3)\label{a:m}] 
     $m_0$ is a probability measure on $\X$;
  \item[(A4)\label{a:fg1}]\hspace{-0.5em}\footnotemark[2] 
   $\cF:\CPX\to\Cb$ and $\cG:\PX\to C_b(\X)$
   are continuous, i.e.~$\lim\limits_{n\to\infty}\sup_{t\in\T}\|m_n(t) - m(t)\|_0=0$
   implies
   \begin{align*}
    \lim_{n\to\infty}\|\cF(m_n)-\cF(m)\|_\infty =0\ \ \text{and}\ \  
    \lim_{n\to\infty}\|\cG(m_n(T))-\cG(m(T))\|_\infty = 0;
   \end{align*}
  \item[(A5)\label{a:fg2}]
   $\cF$ and $\cG$ are monotone operators, namely
   \begin{align*}
    \begin{split}
     &\int_\X\big(\cG(m_1)-\cG(m_2)\big)(x)\big(m_1-m_2\big)(dx) \leq 0,\\
     &\int_0^T\int_\X\big(\cF(m_1)-\cF(m_2)\big)(t,x)(m_1-m_2)(t,dx)\,dt\leq 0,
    \end{split}
   \end{align*}
   for every pair $m_1,m_2$ in $\PX$ or $\CPX$.
  \end{description*}
    \footnotetext[2]{%
    These three conditions need to be strengthened for our results 
    to hold in the concrete cases we present,
  compare the statements of \autoref{thm:ndeg-loc}, \autoref{thm:ndeg-nloc}, and  \autoref{thm:main}.}
  \renewcommand{\thefootnote}{\arabic{footnote}}
  \begin{remark}
  \npar
  $F'\in\Holder{\gamma}(\R)$ is needed for uniqueness. For our existence results,
  $F\in C^1(\R)$  and $\gamma=0$ in \ref{a:F1} is sufficient.
  \npar $F'\geq0$ in \ref{a:F1}, but $F'\geq\kappa>0$ in 
  the concrete cases we discuss below in Sections \ref{sec:ndeg-loc} and \ref{sec:ndeg-nloc}. However, the general theory of \autoref{sec:enadu2} 
  holds under the weaker assumption $F'\geq0$. The latter results allow us to handle a class of degenerate mean field games, and are needed for the upcoming paper~\cite{IJK03b}.
  \npar By the 
  Legendre--Fenchel transform,
  for $F$ satisfying \ref{a:F1} and \ref{a:F2},
  \begin{align*}
  F(z) = \sup_{\zeta\in[0,\infty)}\big(z \zeta- F^*(\zeta) \big)
  \qquad\text{for}\qquad
  F^*(\zeta) = \sup_{z\in \R}\big(\zeta z - F(z) \big).
 \end{align*}
  Accordingly,
   \emph{every} such $F$ is the nonlinearity 
  of a  Hamilton--Jacobi--Bellman equation
  from the stochastic control theory
  \cite{MR2179357,MR2533355}. Hence for a fixed $m$, the first equation in \eqref{eq:mfg} is a  Hamilton--Jacobi--Bellman equation.\footnote{$F$ has the form of (3.2) in \cite[Chapter IV]{MR2179357} with control $v=\zeta\in U=[0,\infty)$, (drift)  $f=0$,  $a=v$, $L=F^*(v)+\cF(m)$. For $\fL=\Delta$, the first equation in \eqref{eq:mfg} is then the HJB equation (3.3) in  \cite[Chapter IV]{MR2179357}.}
  Note that $F^*(\zeta)=\infty$ for $\zeta<0$. See \autoref{appendix:legendre} and \eqref{eq:HJB}--\eqref{eq:HJB2} for more details. 
 
  \npar The operators in~\ref{a:fg1} are so-called smoothing couplings.
  Typically they are nonlocal and defined by a convolution with a fixed kernel
  (see e.g.~\cite{MR4214773}).
  \npar Assumption \ref{a:fg2} is the \emph{standard Lasry--Lions} monotonicity conditions required for unique\-ness.
  The equivalent and more familiar formulation with~$\cF$~and~$\cG$ 
  non-decreasing \cite{MR2295621,MR4214773} is obtained by
  taking $\widetilde{\cG}=-\cG$, $\widetilde{\cF}=-\cF$, and $\widetilde{u}=-u$,
  which leads to
  $-\dt \widetilde{u} = -F(-\fL \widetilde{u}) +\widetilde{\cF}(m)$
  and $\widetilde{u}(T) =\widetilde{\cG}(m)$ in 
  \autoref{eq:mfg}.
  Our choice simplifies the notation when nonlinear diffusion is involved.
  \npar We assume neither strict convexity in \ref{a:F2}
  nor strict monotonicity in \ref{a:fg2}
  and still obtain uniqueness for \autoref{eq:mfg}.
  \end{remark}
 
  With $(f,\uT) = \big(\cF(m),\cG(m(T))\big)$,
  the first pair of equations in \autoref{eq:mfg}
 form a terminal value problem for a fully nonlinear Hamilton--Jacobi--Bellman equation,
 \begin{align}\label{eq:hjb}
 \left\{
  \begin{aligned}
   -\dt u &= F(\fL u) + f\quad&\text{on $\T\times\X$},\\
   u(T)&= \uT\quad&\text{on $\X$}.
  \end{aligned}
  \right.
 \end{align}
 In this case the viscosity solution framework applies, but we consider (bounded) classical solutions, where $\dt u$ and $\fL u$ are continuous functions. Then $\fL u$  
 and the \emph{second pair} of equations in \autoref{eq:mfg} are well-defined.
  With $b=F'(\fL u)$ this pair forms
 an initial value problem for a Fokker--Planck equation,
 \begin{align}\label{eq:fp}
 \left\{
  \begin{aligned}
   &\dt m = \fLs(bm)\quad&\text{on $\T\times\X$},\\ 
   &m(0)=m_0\quad&\text{on $\X$}.
  \end{aligned}
  \right.
 \end{align}
  Since $b=F'(\fL u)$ need not be very regular and may even degenerate,\footnote{An example of a degenerate model is given in~\cite{IJK03b}.} 
  we consider very weak (measure-valued) solutions of \autoref{eq:fp}. Classical solutions $m$ would require even more regularity on $u$ and the data.
  \begin{definition}\label{def:fp-weak}
 Suppose $b\in\Cb$.
 A function $m\in\CPX$ is a \emph{very weak solution} of \autoref{eq:fp}
 if for every $\phi \in\D$ and $t\in\Tb$,
 \begin{align}\label{eq:fp-weaksolution}
 \begin{split}
 \int_\X &\phi(t,x)\,m(t,dx) - \int_\X \phi(0,x)\, m_0(dx) \\&= \int_0^t\int_\X \big( \dt \phi(\tau,x) + b(\tau,x) (\fL \phi)(\tau,x)\big)\,m(\tau,dx)\,d\tau.
 \end{split}
 \end{align}
 \end{definition}
 Now we may define the concept of solutions of \autoref{eq:mfg}.
 \begin{definition}\label{def:mfg}
  A pair $(u,m)$ is a \emph{classical--very weak solution} of \autoref{eq:mfg}
  if $u$ is a bounded classical solution of \autoref{eq:hjb} 
  with data $\big(\cF(m),\cG(m(T))\big)$,
  such that \mbox{$F'(\fL u)\in\Cb$}, and $m$ is a very weak solution of \autoref{eq:fp}
  with initial data $m_0$ and coefficient $b=F'(\fL u)$.
 \end{definition}
We now give the main results of the paper.

\subsection{Well-posedness for local second-order mean field games}\label{sec:ndeg-loc}
Here we assume $2\sigma = 2$ and:
 \begin{description*}\vspace{0.33\baselineskip}
\item[(L$^\prime$)\label{L:ndeg-loc}] 
    $\fL \phi(x) = \tr\big(aa^T D^2\phi(x)\big)\quad\text{where}\quad \det aa^T>0$.
\item[(R)\label{D:ndeg}] There are $\alpha \in (0, 1]$ and $M\in[0,\infty)$ such that the range\footnote{The space $\CaPX{\frac12}$ is defined as in \autoref{def:holder}, only with respect to the norm $\|\cdot\|_0$.}
 \begin{align*}
  \cD = \big\{\big(\cF(m),\cG(m(T))\big) :  m \in \CaPX{\frac12} \big\}
 \end{align*}
 satisfies $\cD\subset \cD_0(\alpha,M)$, where\footnote{See \autoref{def:holder_time-space}
 of spaces $\HHb{\hb}{\hd}$; \textit{BUC} = bounded uniformly continuous.}
 \begin{align*}
    \cD_0(\alpha,M)=\Big\{(f,g) : \ \ (i)\ &\  f\in \HHb{1}{\hd}, \\
 (ii)\ & \ g\in\BUC\quad\text{and}\quad\fL\uT\in L^\infty(\X),\\ (iii)\ & \ \|f\|_{1,\hd}+\|\fL \uT\|_\infty +\|\uT\|_\infty\leq M \Big\}.
\end{align*}
  \item[(A1$^{\prime}$)\label{F:ndeg}] \ref{a:F1} holds and $F'\geq\kappa$ for some $\kappa>0$ (i.e.~$F$ is strictly increasing).
 \end{description*}\smallskip
 
  Under \ref{L:ndeg-loc}, the operator $\fL$ is non-degenerate.
  For \autoref{eq:mfg} to be non-degenerate, we also need to assume \ref{F:ndeg}. Solutions $m$ always belong to $\CaPX{\frac12}$ by \autoref{lemma:fp-tightness}\autoref{item:equicontinuity}. 
    In this setting, we expect interior regularity estimates to hold.
  \begin{definition}[Interior estimates]\label{def:interior-reg}
  Assume \ref{L:levy}.
  \emph{Interior $(\hb,\hd)$-regularity estimates} hold for \autoref{eq:hjb} if for every 
   $f\in \HHb{\hb}{\hd}$, and $(t,x)\in \T\times\X$, and a viscosity solution $u$ of \autoref{eq:hjb},\footnote{
   See \autoref{def:holder_time-space}; see \autoref{def:viscosity} of viscosity solutions for $a=0$ (analogous for $a\neq0$).}
   we have
   \begin{align*}
     [\dt u]_{\HH{\hb}{\hd}{1}} + [\fL u]_{\HH{\hb}{\hd}{1}}
     \leq C(t)\big(\|f\|_{\hb,\hd} + \|u\|_\infty\big).
   \end{align*}
 \end{definition}
 
 In view of the comparison principle
 (\autoref{thm:hjb-viscosity}),
 the right-hand side 
 can be expressed in terms of $\|f\|_{\hb,\hd}$ and $\|\uT\|_\infty$.
 When $F$ is affine, interior regularity
 is given by classical Schauder theory (see e.g.~\cite{MR1406091,MR0241822,MR1465184}).
 In the fully nonlinear case, such estimates have been proved in~\cite{MR1139064}. Related results can be found in e.g.~\cite{MR661144,MR901759,MR1465184}.
  \begin{lemma}\label{lemma:hjb-local-wang}
  Assume \ref{L:ndeg-loc}, $(f,\uT)\in \cD_0(\hd,M)$ (as in \ref{D:ndeg}), \ref{F:ndeg}, \ref{a:F2}.  Then interior $(\hd/2,\hd)$-regularity estimates hold for \autoref{eq:hjb}. 
  \end{lemma}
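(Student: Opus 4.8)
The plan is to recast \eqref{eq:hjb} as a scalar uniformly parabolic fully nonlinear problem and then invoke the interior estimates of Wang~\cite{MR1139064}. Write $A:=aa^T$, which is symmetric and positive definite by \ref{L:ndeg-loc}, and put $\tilde F(X):=F(\tr(AX))$ for symmetric matrices $X$; then $\fL u=\tr(AD^2u)$ and \eqref{eq:hjb} reads $-\dt u=\tilde F(D^2u)+f$ with $u(T)=g$. Since $X\mapsto\tr(AX)$ is linear and $F$ is convex by \ref{a:F2}, the operator $\tilde F$ is convex; and for $Y\geq0$ the mean value theorem gives $\tilde F(X+Y)-\tilde F(X)=F'(\xi)\tr(AY)$ for some $\xi$ between $\tr(AX)$ and $\tr(A(X+Y))$, whence, using $F'\geq\kappa$ from \ref{F:ndeg} and the spectral bounds of $A$,
\[
  \kappa\,\lambda_{\min}(A)\,\tr(Y)\ \leq\ \tilde F(X+Y)-\tilde F(X)\ \leq\ F'(\xi)\,\lambda_{\max}(A)\,\tr(Y).
\]
So $\tilde F$ is uniformly elliptic on any set of matrices where $\tr(AX)$ stays bounded, and after reversing time ($s=T-t$) the problem becomes a forward uniformly parabolic initial value problem with convex nonlinearity and datum $g$.

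It remains to make this ellipticity quantitative by confining the range of $\fL u$. Let $u$ be a viscosity solution; we may assume $\|u\|_\infty<\infty$, since otherwise the estimate is vacuous. By \ref{D:ndeg} the function $\fL g$ is bounded, so $g\pm(T-t)\big(\|F(\fL g)\|_\infty+\sup_{t}\|f(t)\|_\infty\big)$ are a super- and a subsolution of \eqref{eq:hjb}; comparison (\autoref{thm:hjb-viscosity}) then bounds $\|u\|_\infty$, and a standard time-translation/comparison argument — using that $f$ is Lipschitz in time because $f\in\HHb{1}{\hd}$ — shows that $u$ is Lipschitz in time, so $\dt u\in L^\infty$, with both bounds controlled by $\kappa$, $A$, $M$, $T$, and $F$. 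The equation now forces $|F(\fL u)|\leq\|\dt u\|_{L^\infty}+\sup_t\|f(t)\|_\infty=:R$ in the viscosity sense, and since $F$ is an increasing bijection of $\R$ by \ref{F:ndeg}, $\fL u$ is confined to $[F^{-1}(-R),F^{-1}(R)]$. Replacing $F$ outside this compact interval by its tangent lines yields a $C^1$ convex function with derivative in $[\kappa,\sup_{|s|\leq R}F'(s)]$ that coincides with $F$ where $\fL u$ lives; this changes neither $u$ nor the conclusion, and makes $\tilde F$ \emph{globally} uniformly parabolic with constants depending only on the data. (If $F'$ is assumed bounded, this step is superfluous.)

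Finally, $f\in\HHb{1}{\hd}\subset\HHb{\hd/2}{\hd}$, since Lipschitz-in-time embeds into $\mathcal{C}^{\hd/2}$-in-time on $[0,T]$ with $\|f\|_{\hd/2,\hd}\leq C(T)\|f\|_{1,\hd}$. With $\tilde F$ convex and uniformly parabolic, the regularity theory of~\cite{MR1139064} applies: Krylov--Safonov and Caffarelli-type arguments give interior $\mathcal{C}^{1,\bar\alpha}$ bounds for some $\bar\alpha\in(0,1)$, which the parabolic Evans--Krylov estimate — available precisely because $\tilde F$ is convex — together with the inhomogeneous Schauder estimate upgrades to $u\in\mathcal{C}^{1+\hd/2,\,2+\hd}_{\loc}$ and
\[
  [\dt u]_{\HH{\hd/2}{\hd}{1}}+[D^2u]_{\HH{\hd/2}{\hd}{1}}\ \leq\ C(t)\big(\|f\|_{\hd/2,\hd}+\|u\|_\infty\big),
\]
the constant $C(t)$ reflecting the distance $T-t$ to the terminal time. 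Since $\fL u=\tr(AD^2u)$, one has $[\fL u]_{\HH{\hd/2}{\hd}{1}}\leq\|A\|\,[D^2u]_{\HH{\hd/2}{\hd}{1}}$, and bounding $\|f\|_{\hd/2,\hd}$ by $\|f\|_{1,\hd}$ gives the claim. The main obstacle is the middle step — the a priori Lipschitz-in-time estimate and the resulting confinement of $\fL u$, needed to render the nonlinearity quantitatively uniformly parabolic when $F'$ is not assumed bounded; once that is settled, the statement is essentially a citation of Wang's theory, the sole structural ingredient being the convexity of $\tilde F$ inherited from \ref{a:F2}.
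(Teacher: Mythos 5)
Your proposal is correct and follows the same route as the paper, which proves this lemma by reducing to Wang's interior estimates for convex, uniformly parabolic fully nonlinear equations (\cite{MR1139064}, via \cite{MR3951822}). The extra work you supply --- the comparison-based $L^\infty$ and Lipschitz-in-time bounds (cf.\ \autoref{thm:hjb-comparison} and \autoref{lemma:hjb-dt-bound}), the resulting confinement of $\fL u$, and the truncation of $F$ to make $\tilde F$ quantitatively uniformly parabolic despite $F'$ being unbounded above --- is exactly the reduction the paper delegates to the cited references, and it is carried out correctly.
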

   \begin{proof}
   The result is stated in a form which is a corollary to
    \cite[Theorem~5.2]{MR3951822}.
    As in \cite{MR3951822}, 
    it follows from the arguments in \cite{MR1139064}, in particular Theorems~1.1 and 4.13 and their proofs.
    (Our case is slightly simpler since $\fL$ is translation invariant.)
  \end{proof}
  \begin{theorem}\label{thm:ndeg-loc}
 Assume~\ref{L:ndeg-loc}, \ref{D:ndeg}, \ref{F:ndeg}, \ref{a:F2}, \ref{a:m}.
 If in addition\smallskip
  \begin{enumerate}\setlength{\itemsep}{0.33em}
      \item  \ref{a:fg1} holds,
      then there exists a classical--very weak  solution of \autoref{eq:mfg};
      \item \ref{a:fg2} holds,
      then \autoref{eq:mfg} has at most one classical--very weak solution.
\end{enumerate} 
\end{theorem}
This theorem is a corollary of the more general well-posedness result of \autoref{thm:main}.  The result and an outline of the proof is given in \autoref{sec:enadu2}.
\subsection{Well-posedness for nonlocal mean field games}\label{sec:ndeg-nloc}
 Here we assume:
 \begin{description*}\vspace{0.33\baselineskip}
  \item[(L$^{\prime\prime}$)\label{L:ndeg-nloc}] 
    Let $2\sigma\in(0,2)$ and $\fL$ be given by
    \begin{align*}
     \fL \phi(x) = \int_\X \Big(\phi(x+z)-\phi(x)
     -\mathbbm{1}_{[1,2)}(2\sigma) \mathbbm{1}_{B_1}(z)\,z\cdot\nabla\phi(x)\Big)\,\nu(dz),
   \end{align*}
   where  $\nu$ is a L\'evy measure (see \eqref{def:levy-symmetric}), $\nu|_{B_1}$ 
   is absolutely continuous with respect to the Le\-besgue measure, and
   there exists a function $k$ such that
   for $\hd$ as in \ref{D:ndeg} and $K>0$ (see \eqref{eq:holder}), 
   \begin{align*}
    \mathbbm{1}_{B_1}(z)\,\nu(dz) = \frac{k(z)}{|z|^{d+2\sigma}}\,dz,\quad
    K^{-1}\leq k(z)\leq K,\quad [k]_{\Holder{\hd}(B_1)}<\infty.
   \end{align*}   
   If $2\sigma=1$,
   then in addition $\int_{B_1\setminus B_r}\frac{zk(z)}{|z|^{d+1}}\,dz=0$ for every $r\in(0,1)$.\footnote{See \autoref{rem:triplets} for $2\sigma \in(0,1)$.}
 \item[(A1$^{\prime\prime}$)\label{a:F1'}]
   \ref{F:ndeg} holds and $F\in C^2(\R)$ and $F''\in\Holder{1}(\R)$ (see \autoref{def:holder}).
 \end{description*}\smallskip
 
 Again $\fL$ is non-degenerate and we assume \ref{F:ndeg}
 to make \autoref{eq:mfg} non-degenerate as well.
 Condition \ref{L:ndeg-nloc} defines a rich class of nonlocal operators including fractional Laplacians and the nonsymmetric operators in finance.
 There is no restriction on the tail behaviour of $\nu$ other than \eqref{def:levy-symmetric}, so underlying L\'evy processes and solutions of corresponding Fokker--Planck equations may have no moments.
 
  Despite many related results on interior regularity in the literature
 (see e.g.~\cite{MR3148110,MR3115838,MR3803717,MR3667677,MR1432798,MR3951822}),
 we could not find a statement we could cite.
In \autoref{appendix:comparison} we therefore prove the following.
 \begin{lemma}\label{lemma:regularity}
  Assume~\ref{L:ndeg-nloc}, $(f,\uT)\in \cD_0(\hd,M)$ (as in \ref{D:ndeg}),~\ref{a:F1'},~\ref{a:F2}.
   Then interior $(\frac{\hd}{2\sigma},\hd)$-regularity estimates hold for \autoref{eq:hjb}.
 \end{lemma} 
We expect the result to be true under weaker regularity assumptions on $F$.

 \begin{theorem}\label{thm:ndeg-nloc}
 Assume~\ref{L:ndeg-nloc},~\ref{D:ndeg}, \ref{a:F1'}, \ref{a:F2}, \ref{a:m}.
 If in addition \smallskip
  \begin{enumerate}
      \item  \ref{a:fg1} holds,
      then there exists a classical--very weak  solution of \autoref{eq:mfg};
      \item \ref{a:fg2} holds, then \autoref{eq:mfg} has at most one classical--very weak solution.
  \end{enumerate} \end{theorem}

  This theorem is a corollary of the more general well-posedness result of \autoref{thm:main}.  The result and an outline of the proof is given in \autoref{sec:enadu2}.
 
  \subsection{General well-posedness theory}\label{sec:enadu2}
  We describe the properties of solutions to \autoref{eq:hjb} and \autoref{eq:fp}
 that lead to well-posedness of \autoref{eq:mfg}.
  Let 
  \begin{align*}
      \HJcD &= 
      \big\{\text{$u\in\Cbb$ is a bounded classical solution of \autoref{eq:hjb}}\\ &\qquad\text{with data $(f,\uT)=\big(\cF(m),\cG(m(T))\big)$}:  m\in\CaPX{\frac12}\big\},
      \\
  \cB &= \big\{F'\big(\fL u\big): u\in \HJcD\big\}.    
  \end{align*}
  
  \begin{description*}\vspace{0.33\baselineskip}
   \item[(S1)\label{R1}] For every $m\in\CaPX{\frac12}$ there exists a
   bounded classical solution $u$ of \autoref{eq:hjb} with data $(f,\uT)=\big(\cF(m),\cG(m(T))\big)$.\smallskip
   \item[(S2)\label{R2}] If $\{u_n,u\}_{n\in\N}\subset \HJcD$
   are such that $\lim\limits_{n\to\infty}\|u_n- u\|_\infty = 0 $,
   then $\fL u_n(t) \to \fL u(t)$ uniformly on compact sets in $\X$ for every $t\in\T$.\smallskip
   \item[(S3)\label{R3}] There exists $\KHJ\geq0$ such that 
   $\|F'(\fL u)\|_{\infty}\leq \KHJ$ for every $u\in \HJcD$.\smallskip
  \item[(S4)\label{U1}]
    It holds
   $\{\dt u,\,\fL u : u\in \HJcD\}\subset\Cb$.\smallskip
  \item[(S5)\label{U2}] For each $b\in\cB\cap\Cb$  
   and initial data $m_0\in\PX$ there exists at most one very weak solution of \autoref{eq:fp}.
 \end{description*}\smallskip
 
  Condition~\ref{R1} describes existence of solutions of the Hamilton--Jacobi--Bellman equation \eqref{eq:hjb}, 
 which are unique by 
 \autoref{thm:hjb-viscosity},
 and~\ref{U2} describes uniqueness of solutions of the Fokker--Planck equation~\eqref{eq:fp},
 which exist by
 \autoref{thm:fp-existence}.
 Conditions~\ref{R2}, \ref{R3}, \ref{U1} describe various (related) properties of solutions of \autoref{eq:hjb}.
  Under \ref{a:F1}, both \ref{R3}  and  \ref{U1} imply $b=F'(\fL u)\in\Cb$ for $u\in \HJcD$. 
  
 \begin{theorem}\label{thm:main}
  Assume~\ref{L:levy},~\ref{a:F1},~\ref{a:m}.
  If in addition\smallskip
  \begin{enumerate}\setlength{\itemsep}{0.33em}
      \item \ref{a:fg1}, \ref{R1}, \ref{R2}, \ref{R3} hold,
      then there exists a classical--very weak  solution of \autoref{eq:mfg};
      \item \ref{a:F2},~\ref{a:fg2}, \ref{U1}, \ref{U2} hold,
      then \autoref{eq:mfg} has at most one classical--very weak solution.
  \end{enumerate}
 \end{theorem}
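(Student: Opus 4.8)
The plan is to follow the classical decoupling strategy for mean field games: given $m$, solve the Hamilton--Jacobi--Bellman equation \autoref{eq:hjb}, feed $b=F'(\fL u)$ into the Fokker--Planck equation \autoref{eq:fp}, and close the loop by a fixed point argument; uniqueness then rests on the Lasry--Lions duality together with the set $\cB$. All HJB and FP facts are used as black boxes (existence \ref{R1} and comparison \autoref{thm:hjb-comparison} for \autoref{eq:hjb}; existence \autoref{thm:fp-existence} and stability \autoref{lemma:fp-continuity} for \autoref{eq:fp}). For $\mu\in\Mb$ and bounded $\phi$ we abbreviate $\langle\mu,\phi\rangle=\int_\X\phi\,d\mu$.

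\emph{Existence.} For $m\in\CPX$ let $u[m]$ be the bounded classical solution of \autoref{eq:hjb} with data $\big(\cF(m),\cG(m(T))\big)$, which exists by \ref{R1} and is unique by \autoref{thm:hjb-comparison}. Since $\fL u[m]\in\Cu$ (\autoref{def:hjb-classical}), \ref{a:F1} makes $b[m]:=F'(\fL u[m])$ continuous and \ref{R3} bounds it by $\KHJ$, so $b[m]\in\Cb$. Let $\Phi(m)$ denote the set of very weak solutions of \autoref{eq:fp} with coefficient $b[m]$ and initial datum $m_0$; it is nonempty by \autoref{thm:fp-existence}, convex by linearity of \autoref{eq:fp}, and closed in $\CPX$ since \eqref{eq:fp-weaksolution} passes to the limit for a fixed bounded continuous $b$. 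From \eqref{eq:fp-weaksolution}, any solution with $\|b\|_\infty\le\KHJ$ obeys $|\langle m'(t)-m'(s),\phi\rangle|\le|t-s|\big(\|\dt\phi\|_\infty+\KHJ\|\fL\phi\|_\infty\big)$ for $\phi\in\D$; together with the uniform tightness bounds of \autoref{sec:prelim}, this makes the family $\mathcal S$ of all such solutions (with datum $m_0$) relatively compact in $\CPX$, so its closed convex hull $K$ (compact, e.g.\ by Mazur's theorem) is a convex compact subset of $\CPX$ with $\Phi(\CPX)\subset\mathcal S\subset K$. To check $\Phi$ has closed graph on $K$, take $m_n\to m$ in $\CPX$ and $m'_n\in\Phi(m_n)$ with $m'_n\to m'$: by \ref{a:fg1} the data converge uniformly, and since $\fL$ annihilates constants the barrier $u[m]+\delta(T-t)+\eta$ together with \autoref{thm:hjb-comparison} gives $\|u[m_n]-u[m]\|_\infty\le T\|\cF(m_n)-\cF(m)\|_\infty+\|\cG(m_n(T))-\cG(m(T))\|_\infty\to0$; then \ref{R2} gives $\fL u[m_n](t)\to\fL u[m](t)$ locally uniformly for each $t$, \ref{a:F1} gives $b[m_n](t)\to b[m](t)$ locally uniformly (bounded by $\KHJ$), and \autoref{lemma:fp-continuity} yields that $m'$ solves \autoref{eq:fp} with coefficient $b[m]$, i.e.\ $m'\in\Phi(m)$. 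The Kakutani--Fan--Glicksberg theorem supplies $m\in\Phi(m)$, and $(u[m],m)$ is then a classical--very weak solution in the sense of \autoref{def:mfg}.

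\emph{Uniqueness.} Let $(u_1,m_1)$, $(u_2,m_2)$ be classical--very weak solutions and set $\bar u=u_1-u_2$, $\bar m=m_1-m_2$, so $\bar m(0)=0$ and $\bar u(T)=\cG(m_1(T))-\cG(m_2(T))$. By \ref{U1} we have $\dt u_i,\fL u_i\in\Cb$, so $\bar u$ is admissible in an extension of \eqref{eq:fp-weaksolution} --- valid by the approximation arguments of \autoref{sec:fpe}, which replace moment bounds by tightness --- applied to each $m_i$. Differentiating $t\mapsto\langle m_i(t),\bar u(t)\rangle$, subtracting, and integrating over $[0,T]$ gives
\[
 \langle\bar m(T),\bar u(T)\rangle=\int_0^T\!\big(\langle m_1,\Psi(\fL u_1,\fL u_2)\rangle+\langle m_2,\Psi(\fL u_2,\fL u_1)\rangle\big)\,dt-\int_0^T\!\langle\bar m,\cF(m_1)-\cF(m_2)\rangle\,dt,
\]
where $\Psi(a,b)=F(b)-F(a)-F'(a)(b-a)\ge0$ by \ref{a:F2}. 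The left-hand side is $\le0$ by \ref{a:fg2}, and $-\int_0^T\!\langle\bar m,\cF(m_1)-\cF(m_2)\rangle\,dt\ge0$ again by \ref{a:fg2}; since $m_1,m_2\ge0$ and $\Psi\ge0$ the identity forces every term to vanish, so $\Psi(\fL u_1(t,\cdot),\fL u_2(t,\cdot))=0$ for $m_1(t)$-a.e.\ $x$ and a.e.\ $t$, and symmetrically with $m_2$. As $F'\in C(\R)$ is nondecreasing (\ref{a:F1}, \ref{a:F2}), $\Psi(a,b)=0$ forces $F'$ to be constant on the closed interval between $a$ and $b$; hence $b_\theta:=\int_0^1 F'\big(s\fL u_1+(1-s)\fL u_2\big)\,ds$ equals $F'(\fL u_1)$ $\,m_1(t)$-a.e.\ and $F'(\fL u_2)$ $\,m_2(t)$-a.e., so $b_\theta m_i=F'(\fL u_i)m_i$. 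Therefore \emph{both} $m_1$ and $m_2$ are very weak solutions of \autoref{eq:fp} with the \emph{same} coefficient $b_\theta$ and datum $m_0$; moreover $b_\theta\in\cB$ by definition and $b_\theta\in\Cb$ since $\fL u_i\in\Cb$ (\ref{U1}) and $F'$ is continuous (\ref{a:F1}). Now \ref{U2} forces $m_1=m_2$, hence $\cF(m_1)=\cF(m_2)$ and $\cG(m_1(T))=\cG(m_2(T))$, and \autoref{thm:hjb-comparison} applied to \autoref{eq:hjb} gives $u_1=u_2$.

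\emph{Expected main obstacle.} Two points carry the weight. For existence it is the uniform tightness of Fokker--Planck trajectories under only $\|b\|_\infty\le\KHJ$ (with no moment hypotheses), which is what makes $K$ compact. For uniqueness it is the rigorous justification of the duality identity --- extending the very weak formulation \eqref{eq:fp-weaksolution} to the non-compactly-supported test function $\bar u$ in the nonlocal setting --- together with the observation that \emph{mere} convexity of $F$, via the vanishing of $\Psi$ and the continuity of $F'$, already pins both $m_i$ to the common coefficient $b_\theta\in\cB\cap\Cb$; this is precisely what lets one dispense with strict convexity in \ref{a:F2} and strict monotonicity in \ref{a:fg2}.
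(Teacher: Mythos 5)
Your proposal is correct and follows essentially the same route as the paper: a Kakutani--Glicksberg--Fan fixed point on a compact convex set of Fokker--Planck trajectories (compactness from Lyapunov-function tightness plus equicontinuity in the $\|\cdot\|_0$ norm, closed graph from \ref{R2} and \autoref{lemma:fp-continuity}), and for uniqueness the Lasry--Lions duality forcing the Bregman terms to vanish so that both $m_1$ and $m_2$ solve \autoref{eq:fp} with the common coefficient $b_\theta\in\cB\cap\Cb$, whence \ref{U2} applies. The only cosmetic difference is that you extract $b_\theta=F'(v_i)$ on $\supp m_i$ from the monotonicity of $F'$ (constancy on the segment between $v_1$ and $v_2$), where the paper uses the equivalent difference-quotient identity \eqref{eq:expression_b}.
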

\subsubsection*{Proof of \autoref{thm:main}}
The results are proved in \autoref{sec:mfg}.
Existence is addressed in~\autoref{thm:mfg-existence}
by an application of the Kakutani--Glicksberg--Fan fixed point theorem,
which requires a detailed analysis
of \autoref{eq:hjb} and \autoref{eq:fp}.
Of particular interest are the compactness and stability
results \autoref{lemma:fp-tightness}, \autoref{cor:m-compact},
and \autoref{lemma:fp-continuity} for the Fokker--Planck equation.

 Uniqueness follows by \autoref{thm:mfg-uniqueness}.
 Note that in contrast to previous work  (cf.~e.g.~\cite[(1.24), (1.25)]{MR4214773}) we only need (non-strict) convexity of $F$ in~\ref{a:F2} and (non-strict) monotonicity of $\cF$ and $\cG$ in~\ref{a:fg2}, without further restrictions.
\subsubsection*{Proofs of Theorems \ref{thm:ndeg-loc}, \ref{thm:ndeg-nloc}} The well-posedness results for non-degenerate cases (Sections \ref{sec:ndeg-loc}--\ref{sec:ndeg-nloc}) follow by verifying the general conditions \ref{R1}--\ref{U2} and  then applying \autoref{thm:main}.
 We obtain \ref{R1}--\ref{U2}
 from \autoref{thm:hjb-interior}
 and \autoref{thm:fp-uniqueness} ---
 see \autoref{cor:hjb-l2-s} and \autoref{cor:b-uniq}.
 \subsection{Extensions to include controlled drift}\label{sec:extension}
 When the diffusion operator $\fL$ is non-degenerate and of order $2\sigma>1$ (local or nonlocal), the well-posedness results above can easily be extended to include controlled drift. To illustrate this, we consider \autoref{eq:mfg-ext} which comes from a model where the drift and the time-rate changes of the driving L\'evy process are controlled separately (with separate controls). 
  \begin{theorem}\label{thm:ndeg-ext}
 Assume~\ref{L:ndeg-loc} or \ref{L:ndeg-nloc} with $\sigma>\frac12$, and \ref{D:ndeg}, \ref{a:F1'},  \ref{a:F2}, \ref{a:m}. Let $H \in C^{2}(\X)$ be strictly convex and $D^2 H \in \Holder{1}(\X,\X\times\X)$.
 If in addition\smallskip
\begin{enumerate}\setlength{\itemsep}{0.33em}
      \item  \ref{a:fg1} holds,
      then there exists a classical--very weak  solution of \autoref{eq:mfg-ext};
      \item \ref{a:fg2} holds,
      then \autoref{eq:mfg-ext} has at most one classical--very weak solution.
\end{enumerate} 
\end{theorem}

 We omit the details of the proof.
By the assumed regularity and convexity of $H$, it can be adapted from the
arguments we use to prove \autoref{thm:ndeg-loc} or \autoref{thm:ndeg-nloc}. With little additional difficulty, most of the effort would involve tedious rewriting of the results of \autoref{sec:fpe}.
Importantly, the results we used from \cite{MR1139064,MR3803717,MR3201992} to establish the interior regularity estimates for the Hamilton--Jacobi--Bellman equation, as well as uniqueness for the Fokker--Planck equation, still hold in the setting of \autoref{thm:ndeg-ext}.  
\subsection{Mean field games in \texorpdfstring{$\X$}{Rd} without moment assumptions}\label{subsec:other}
 In the mean field game literature (see e.g.~\cite{MR4214773}),
 it is common to use the Wasserstein-1 space $(\mathcal{P}_1,d_1)$  (or Wasserstein-$p$ for $p>1$)
 in the analysis of the Fokker--Planck equations. Here $\mathcal{P}_1$ is the space of probability measures with finite first moments.
 For compactness, finite $1+\epsilon$ moments are typically assumed.

 Moments of solutions of the Fokker--Planck equation depend on both the driving L\'evy process
 and the initial distribution.
 L\'evy processes have the same moments
 as the tails of their L\'evy measures \cite[Theorem 25.3]{MR3185174},
 e.g.~the Brownian motion has moments of any order,
 while a $2\sigma$-stable process with $\nu(dz)\approx\frac{dz}{|z|^{d+2\sigma}}$
 only has moments of order less than $2\sigma\in(0,2)$.
 Condition \ref{L:ndeg-nloc} puts no restriction on $\nu|_{B_1^c}$.
 The mean field games we consider may thus be driven by processes with unbounded first moments,
 like the $2\sigma$-stable processes for $2\sigma\leq1$.
 This means that we cannot work in $(\mathcal{P}_1,d_1)$,
 even when the initial distribution $m_0$ has moments of all orders.

 We work in the space $(\mathcal{P},d_0)$ of probability measures under weak convergence,
 metrised by $d_0$, defined by the Rubinstein--Kantorovich norm $\|\cdot\|_0$
 (see \autoref{subsec:meas}).
 The $d_0$-topology is strictly weaker than the $d_1$-topology,
 as it does not require convergence of first moments.
 The tools we develop can be useful for other problems 
 and have already been used  \cite{MR4309434,chowdhury2021numerical,jakobsen2023master}. In the local case they yield results for a larger class of initial distributions $m_0$ than usually considered.
 Crucial ingredients are more refined tightness arguments
 and their interplay with L\'evy processes. In particular,
 the sequence of Lemmas \ref{prop:lyapunov}, \ref{lemma:V-equiv} and \ref{lemma:approx}, regarding compact sets in $(\mathcal{P},d_0)$, and \emph{a priori} estimates for approximations of L\'evy operators, leading to \autoref{lemma:fp-tightness}.
 
\end{section}
\begin{section}{Derivation of the model}\label{section:stochastics}
 In this section we show heuristically that \autoref{eq:mfg} is related to a mean field game
 where players control the time change rate of a L\'evy process.
 Random time change of SDEs is a well-established
 technique~\cite{MR3363697,MR1011252,MR455113,MR1046331} with applications
 e.g.~in modelling markets or turbulence~\cite{MR2484103,CARR2004113}.
For SDEs driven by self-similar processes, like the Brownian motion or an $\alpha$-stable process, this type of control coincides with the classical (continuous) control \cite{MR2179357,MR2322248}.\footnote{By self-similarity (e.g.~for the Brownian motion $B_{ct} = \sqrt{c} B_t$) controlled time change is equivalent to control of the strength of the diffusion (controlled diffusion).}
 However, for other L\'evy processes, including compound Poisson
 and most jump processes used in finance and insurance, this is not the case.

 This type of a control problem seems to be new and we plan to analyse it in full detail in a future paper.
 
\subsection{Time changed L\'evy process}
 We start by fixing a L\'evy process $X_t$ and the filtration $\{\mathcal{F}_t\}$ it generates.
 The infinitesimal generator $\fL$ of $X$ is given by~\ref{L:levy}.
  \begin{definition}[{\cite[Definition~1.1]{MR3363697}}]
 A \emph{random time change} $\rtc_s$ is an almost surely
 non-negative, non-decreasing stochastic process which is a finite stopping
 time for each fixed $s$.\footnote{$\rtc_s$ is a stopping time if
 $\{\rtc_s\leq \tau\}\subset \mathcal{F}_\tau$ for $\tau\geq 0$.}
 It is \emph{absolutely continuous} if there exists a non-negative
 $\mathcal{F}_s$-adapted process $\rtc'$ such that $\rtc(s) -\rtc(0)= \int_0^s\rtc'(\tau)\,d\tau$.
 \end{definition}
 For $(t,x)\in\T\times\X$ and $s\geq t$, we define an $\mathcal{F}_s$-adapted
 L\'evy process $X^{t,x}_s$ starting from $X^{t,x}_t=x$  by
  $X_s^{t,x}= x+ X_s-X_t$.
 Then, for an absolutely continuous random time change $\rtc_s$ such that $\rtc_t=t$,
 $\rtc_{s}'$~is deterministic at $s=t$,
 and $\rtc_{s+h}-\rtc_{s}$ is independent of $\mathcal{F}_{\rtc_s}$
 for all $s,h\geq0$, we define a time-changed process 
     $Y^{t,x,\rtc}_s = X_{\rtc_s}^{t,x}$.
 It is an inhomogeneous Markov process associated with the families of operators~$P^\rtc$
 and transition probabilities~$p^\rtc$ (see~\cite[\S1.1, \S1.2~(10)]{MR2058260}) given by
 \begin{align}\label{eq:P}
  P_{t,s}^{\rtc}\phi(x) = \int_\X \phi(y)\,p^{\rtc}(t,x,s,dy) = E\phi\big(Y^{t,x,\rtc}_{s}\big)
 \end{align}
 for $\phi\in C_b(\X)$.
 To compute the ``generator'' $\fL_\rtc$ of $Y^{t,x,\rtc}$,
 note that by the Dynkin formula~\cite[(1.55)]{MR3156646}, if $\phi \in \Dom(\fL)$
 \begin{align*}
  E\phi\big(Y^{t,x,\rtc}_{s}\big) -\phi(x)=
  E\bigg(\int_t^{\rtc_s}\fL \phi(X^{t,x}_\tau)\,d\tau\bigg),
  \end{align*}
  and by a change of variables,
  \begin{align*}
   \frac{P_{t+h,t}^{\rtc}\phi(x)-\phi(x)}{h}=\frac{E\phi\big(Y^{t,x,\rtc}_{t+h}\big) - \phi(x)}{h}
   =E\bigg(\frac1h\int_t^{t+h}\fL\phi\big(X^{t,x}_{\rtc_\tau}\big)\rtc'_\tau\,d\tau\bigg).
  \end{align*}
 Under some natural assumptions, we can show that
$X^{t,x}_{\rtc_\tau}\to x$ as $\tau\to t$ and use the dominated convergence theorem etc.~to get that
  \begin{align}\label{eq:afl}
  \fL_\rtc\phi(x)=\lim_{h\to0^+}\frac{P_{t+h,t}^{\rtc}\phi-\phi}{h}(x)
  = \rtc'_t\,\fL\phi(x).
 \end{align}
 A proof of a more general result can be found
 in e.g.~\cite[Theorem~8.4]{MR3363697}.

\subsection{Control problem and Bellman equation}\label{sec:bellman} To control the
process $Y_s^{t,x,\rtc}$, we introduce a running gain (profit, utility) $\ell$, a
terminal gain $\uT$, and an expected total gain functional 
 \begin{align*}
  J(t,x,\rtc) = E\bigg(\int_t^T
  \ell\big(s,Y_s^{t,x,\rtc},\rtc_s'\big)\,ds+\uT\big(Y_T^{t,x,\rtc}\big)\bigg).
 \end{align*}
The goal is to find an
admissible control $\rtc^*$ that maximizes $J$.
If such a control exists, the optimally controlled process is given by $Y^{t,x,\rtc^*}_s\!$.

Under a suitable definition of the set of admissible controls
$\mathcal{A}$ and standard
assumptions on $\ell$ and $\uT$, $J$ is well-defined.
The corresponding value function $u$ (the optimal value of $J$) is given by
 \begin{align}\label{eq:JJ}
  u(t,x) = \sup_{\rtc\in\mathcal{A}} J(t,x,\rtc).
 \end{align}

Let $h>0$ and $t+h<T$.
By the dynamic programming principle,
\begin{align*}u(t,x)  = \sup_{\rtc} E\bigg(\int_t^{t+h}
  \ell\big(s,Y^{t,x,\rtc}_s,\rtc'_s\big)\,ds +
  u\big(t+h,Y^{t,x,\rtc}_{t+h}\big)\bigg),
  \end{align*}
  and hence
\begin{align*}
  &-\frac{u(t+h,x)-u(t,x)}{h}\\
  &\quad=\sup_{\rtc} E\bigg(\frac{u\big(t+h,Y^{t,x,\rtc}_{t+h}\big)-u(t+h,x)}h
  +\frac1h\int_t^{t+h}\ell\big(s,Y^{t,x,\rtc}_s,\rtc'_s\big)\,ds \bigg).
  \end{align*}
Recalling the definition of $\fL_\rtc$ in \eqref{eq:afl}, we can
(heuristically at least) pass to the limit as $h\to 0$ and find the
following dynamic programming --- or Bellman --- equation
\begin{align}\label{eq:HJB}
  -\dt u = \sup_{\zeta\geq0}\Big(\zeta \fL u + \ell(t,x,\zeta) \Big),
  \end{align}
satisfied e.g.~in the viscosity sense (see \autoref{sec:HJB}),
where $\zeta$  denotes  the (deterministic) value of $\rtc'_t$ to simplify the notation.
 We now assume that
  $\ell(t,x,\zeta) = -L(\zeta)  + f(t,x)$,
  where $L:[0,\infty)\to\R\cup\{\infty\}$ is a convex,
  lower-semicontinuous function.
  Then the Bellman equation can be expressed in terms
  of the Legendre--Fenchel transform~$F$ of~$L$,
  i.e.~$F(z)=\sup_{\zeta\geq0}\big(\zeta z-L(\zeta)\big)$, as
  \begin{align}\label{eq:HJB1}
  -\dt u = F\big(\fL u\big) + f(t,x).
  \end{align}
  By the definitions of $u$ and $X_T^{T,x}$ it also follows that
  \begin{align}\label{eq:HJB2}
    u(T,x) = E\uT\big(X_T^{T,x}\big)=\uT(x).
  \end{align}
  
\subsection{Optimal control and Fokker--Planck equation}\label{sec:optctrl}
 By the properties of the Legendre--Fenchel transform,
 when $\lim\limits_{\zeta\to\infty}L(\zeta)/\zeta = \infty$ and $L$ is strictly convex on $\{L\neq\infty\}$,
 the optimal value $\zeta$ in \eqref{eq:HJB} satisfies
 $\zeta=F'(\fL u)$ for every $(t,x)\in\T\times\X$ (see \autoref{prop:G-F}).
 We therefore obtain a function
\begin{align}\label{eq:fb}
  b(t,x) =\zeta = (\rtc^*)'_t = F'\big(\fL u(t,x)\big).
   \end{align}
This is the optimal time change rate in the feedback form.
The optimally controlled process and the optimal control in~\eqref{eq:JJ} are then implicitly given by
\begin{align*}
Y^*_s=X_{\rtc^*_s}^{t,x} \qquad \text{and}\qquad \rtc_s^*=t+\int_t^s
b(\tau,Y^*_\tau)\,d\tau.
\end{align*}
They are well-defined if $b$ is e.g.~bounded and
continuous.

  By  defining $p^{\rtc^*}(t,x,s,A)=\mathsf{P}(Y_s^*\in A)$, 
  if solutions of equations \eqref{eq:HJB1}--\eqref{eq:HJB2} are  unique,
  we obtain a unique family of transition probabilities $p^{\rtc^*}$ (cf.~\eqref{eq:P}),
   satisfying the Chapman--Kolmogorov relations.
  This family, in turn, defines a wide-sense Markov process
  (see~\cite[\S1.1 Definition~1]{MR2058260}).
  Given an initial condition $m(0) = m_0\in\PX$,
  the (input) distribution $m$ of this Markov process
  (see~\cite[\S1.1 Definition~3]{MR2058260})\footnote{Alternatively,
  $m(t)$ is the distribution of the solution $Z(t)$ of 
  SDE $dZ(t) = b(t, Z(t))\, dX(t)$, $Z(0) \sim m_0$.
  Moreover, $Y_s^* = E\big[Z(s)| Z(t) = x\big]$,  see \cite[\S1.2\,(9),\,(10)]{MR2058260}.}
  satisfies
 \begin{align*}
  \int_\X \varphi(x)\,m(t+h,dx) = \int_\X\int_\X \varphi(y)\,p^{\rtc^*}(t,x,t+h,dy)\,m(t,dx),
 \end{align*}
 for every $\varphi\in C_c^\infty(\X)$ and $t,h\geq 0$.
 Then,
 \begin{multline*}
  \int_\X \big(\varphi(t,x)\,m(t,dx)-\varphi(t+h,x)\,m(t+h,dx)\big) \\
  = \int_\X\int_\X \big(\varphi(t,y)\,p^{\rtc^*}(t,x,t,dy)-\varphi(t+h,y)p^{\rtc^*}(t,x,t+h,dy)\big)\,m(t,dx),
 \end{multline*}
 and because of~\eqref{eq:afl},~\eqref{eq:fb} and the fact that $p^{\rtc^*}(t,x,t,dy)=\delta_x(dy)$,
 this leads to 
  \begin{align*}
  \dt \int_\X \varphi(t,x)\,m(t,dx) = \int_\X \Big(b(t,x)\fL\varphi+\dt\varphi(t,x)\Big)\,m(t,dx).
 \end{align*}
 Since $b=F'(\fL u)$, by  duality (see \autoref{def:fp-weak}) $m$ is a very weak solution of
 \begin{align}\label{eq:FPE}
  \dt m = \fLs \big(F'(\fL u)\, m\big),\quad m(0) = m_0,
 \end{align}
 where $\fLs$ is the formal adjoint of $\fL$.

\subsection{Heuristic derivation of the mean field game} A mean field
 game is a limit of games between identical players as the
 number of players tends to infinity.
 In our case, each player controls the time change rate of her own independent copy
 of the L\'evy process $X$, with running and terminal gains depending on
 the anticipated distribution $\widehat m$ of the processes controlled (optimally) by the other
 players (see~\ref{a:fg1})
 \begin{align*}
   f=\cF(\widehat m)\qquad\text{and}\qquad
   \uT=\cG\big(\widehat m(T)\big).
   \end{align*}
 By the results of \autoref{sec:bellman} the corresponding 
Bellman equation for
each player is
\begin{align*}
\left\{\begin{aligned}
  -\dt u &= F(\fL u) + \cF(\widehat m)\quad &\text{on $\T\times\X$},\\
  u(T)&= \cG\big(\widehat m(T)\big)\quad &\text{on $\X$}.
\end{aligned}\right.
\end{align*}
 Note that the solution $u$ depends on $\widehat m$,
 and then so does the optimal feedback control~\eqref{eq:fb}.
 Suppose that the players’ processes start from some known initial distribution $m_0\in\PX$.
 Then, the actual distribution $m$ of their optimally controlled processes
 is given by the solution of the Fokker--Planck equation \eqref{eq:FPE},
 described in \autoref{sec:optctrl}.

At a Nash equilibrium we expect $\widehat m=m$, i.e.~the anticipations of the players to be correct.
The result is a closed model of coupled equations as in \autoref{eq:mfg}.
\end{section}

\begin{section}{Preliminaries}\label{sec:prelim}
  By $K_d= 2\pi^{d/2}\Gamma(d/2)^{-1}$ we denote the surface measure of the $(d-1)$-dimen\-sional unit sphere.
  By $B_r$ and $B_r^c$ we denote the ball of radius $r$ centred at $0$
  and its complement in $\X$.
  Similarly, $B_r(x)$ denotes a ball centred at $x$.

 \begin{definition}\label{def:holder}
  A function $\phi$ is H\"older-continuous at $x\in \X$ with parameter $\hd\in(0,1]$ if
  for some $r>0$
  \begin{align}\label{eq:holder}
   [\phi]_{\Holder{\hd}(B_r(x))} = \sup_{y\in B_r(x)\setminus\{x\}} \frac{|\phi(x)-\phi(y)|}{|x-y|^\hd}<\infty.
  \end{align}
  The space $\Holder{\hd}(\X)$ consists of functions which are H\"older-continuous
  at every point in $\X$ with parameter $\hd$.
  Further, define $\Hb{\hd} = \{\phi:\|\phi\|_{\hd}<\infty\}$, where
  \begin{align*}
      [\phi]_{\hd} = \sup_{x\in \X}\,[\phi]_{\Holder{\hd}(B_1(x))} 
      \quad \text{and}\quad \|\phi\|_{\hd} = \|\phi\|_{L^\infty(\X)} + [\phi]_{\hd}.
  \end{align*}
  \end{definition}
  Note that the definition of $\Hb{\hd}$ is equivalent 
 to the more standard notation, where the supremum in \eqref{eq:holder} is taken over $|x-y|\in\X\setminus\{0\}$.
 The space $\Hb{1}$ consists of bounded, Lipschitz-continuous functions.
 By $C^1(\X)$, $C^2(\X)$ we denote spaces of once or twice continuously differentiable functions.
 \begin{definition}\label{def:holder_time-space}
  For $(t,x)\in\T\times\X$ and $\hd,\hb\in(0,1]$, define 
 \begin{align*}
    [\phi]_{\HH{\hb}{\hd}{r}} =
    \sup_{y\in B_r(x)}\, [\phi(y)]_{\Holder{\hb}([0,t])} +
    \sup_{s\in [0,t]}\, [\phi(s)]_{\Holder{\hd}(B_r(x))}.
 \end{align*}
 \end{definition}
 We also denote
 $\mathcal{C}_b^{\hb,\hd}([0,t]\times\X) = \{\phi:\|\phi\|_{\mathcal{C}^{\hb,\hd}([0,t]\times\X)}<\infty\}$,
 where
 \begin{align*}
     \|\phi\|_{\mathcal{C}^{\hb,\hd}([0,t]\times\X)}
     = \|\phi\|_{L^\infty([0,t]\times\X)}+\sup_{x\in\X}[\phi]_{\HH{\hb}{\hd}{1}}.
 \end{align*}
 
  \begin{definition}\label{def:bounded}
   When $X$ is a normed space, $B(\T,X)$ denotes the space of bound\-ed functions from $\T$ to $X$, i.e.~$B(\T,X) = \big\{u:\T\to X\ :\ \textstyle\sup_{t\in\T}\|u(t)\|_X<\infty\big\}$.
  \end{definition} 
  Note the subtle difference between $B(\T,X)$ and the usual space $L^\infty(\T,X)$.
  
\subsection{Spaces of measures}\label{subsec:meas}

 Let $\PX$ consist of probability measures on $\X$, a~subspace of
 the space of bounded Radon measures $\Mb=C_0(\X)^*$.
 Denote
 \begin{align*}
  m[\phi] = \int_\X \phi(x)\,m(dx)\quad\text{for every $m\in\PX$ and $\phi\in C_b(\X)$.}
 \end{align*}
 The space $\PX$ is equipped with the topology of weak convergence of measures,\!\!~
 \footnote{It is also called \emph{narrow}, \emph{vague} or \emph{weak-$*$} convergence.}
 \begin{align*}
  \lim_{n\to\infty}m_n = m \quad\text{if and only if}\quad \lim_{n\to\infty}m_n[\phi] 
  = m[\phi]\text{ for every $\phi\in C_b(\X)$}.
 \end{align*}
 This topology can be metrised by an embedding into a normed space (see~\cite[\S8.3]{MR2267655}).
 \begin{definition}\label{def:rubinstein}
  The Rubinstein--Kantorovich norm $\|\cdot\|_0$ on $\Mb$ is given by 
  \begin{align*}
   \|m\|_0 = \sup \big\{m[\psi]: \psi \in\Hb{1},\ \|\psi\|_\infty\leq1,\ [\psi]_1\leq 1\big\}.
  \end{align*}
 \end{definition}
 While the space $\big(\Mb,\|\cdot\|_0\big)$ is not completely metrisable,
 thanks to~\cite[Theorems~4.19 and~17.23]{MR1321597}, both $\PX$ and $\CPX$ are complete spaces. Let
  \begin{align*}
  \mathcal{P}_{ac}(\X) = \big\{u\in L^1(\X):\|u\|_{L^1(\X)} = 1,\ u\geq0\ \big\}= L^1(\X)\cap\PX.
 \end{align*}
 We endow $\mathcal{P}_{ac}(\X)$ with the topology inherited from $\PX$.
 \begin{definition}
  A set of measures $\Pi\subset\PX$ is tight
  if for every $\epsilon>0$ there exists a compact set $K_\epsilon\subset\X$
  such that for every $m\in\Pi$ we have $m(K_\epsilon) \geq 1-\epsilon$.
 \end{definition}
 This concept is important because of the Prokhorov theorem, which states that
  a set $\Pi\subset\PX$ is pre-compact if and only if it is tight.
 \begin{definition}\label{def:lyapunov}
  A real function $V\in C^2(\X)$ is a Lyapunov function if $V(x) = V_0\big(\sqrt{1+|x|^2}\big)$
  for some  subadditive, non-decreasing function $V_0:[0,\infty)\to[0,\infty)$
  such that $\|V_0'\|_\infty,\|V_0''\|_\infty\leq 1$, and $\lim\limits_{x\to\infty}V_0(x)= \infty$.
 \end{definition}
 \begin{remark}\label{rem:lyapunov} 
 \npar\label{rem:bd-der} Because $\|V_0'\|_\infty,\|V_0''\|_\infty\leq 1$,
  we have $\|\nabla V\|_\infty,\| D^2V\|_\infty\leq 1$.
  Note that the choice of the constant $1$ in this condition is arbitrary.
 \npar $\big(1+|x|^2\big)^{a/2}$ for $a\in(0,1]$ and $\log\big(\sqrt{1+|x|^2}+1\big)$
  are Lyapunov functions.
 \npar If $m_0\in\PX$ has a finite first moment and~$V$ is any Lyapunov function,
  then $m_0[V]<\infty$.
  Indeed, since $0\leq V_0'\leq 1$, we have $V(x)\leq V(0)+|x|$, thus $m_0[V]\leq V(0)+\int_\X |x|\,dm_0$.
 \end{remark}
  
 \begin{proposition}\label{prop:pre-compactness}
 If $V$ is a Lyapunov function, then for every $r>0$ the set
  \begin{align*}
   \mathcal{P}_{V,r} = \big\{m\in\PX : m[V] \leq r\big\}
  \end{align*}
  is tight and then compact by the Prokhorov theorem.
 \end{proposition}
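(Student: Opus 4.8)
The plan is to show tightness of $\mathcal{P}_{V,r}$ directly from the definition, then invoke Prokhorov. The key observation is that a Lyapunov function $V$ has \emph{unbounded growth}: since $V_0$ is non-decreasing with $V_0(x)\to\infty$ as $x\to\infty$, for every $\epsilon>0$ there is $R_\epsilon>0$ such that $V_0(t)\geq r/\epsilon$ whenever $t\geq R_\epsilon$. Consequently, on the complement of the ball $B_{R'_\epsilon}$ (with $R'_\epsilon$ chosen so that $\sqrt{1+|x|^2}\geq R_\epsilon$ for $|x|\geq R'_\epsilon$, e.g.~$R'_\epsilon=R_\epsilon$ works) we have $V(x)=V_0(\sqrt{1+|x|^2})\geq r/\epsilon$.

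First I would fix $\epsilon>0$ and set $K_\epsilon=\overline{B_{R'_\epsilon}}$, a compact subset of $\X$. Then for any $m\in\mathcal{P}_{V,r}$, using $V\geq 0$ and the Chebyshev/Markov-type estimate,
\begin{equation*}
 m\big(\X\setminus K_\epsilon\big)
 = \int_{\X\setminus K_\epsilon} 1\, m(dx)
 \leq \int_{\X\setminus K_\epsilon} \frac{\epsilon}{r}\,V(x)\, m(dx)
 \leq \frac{\epsilon}{r}\int_\X V(x)\, m(dx)
 = \frac{\epsilon}{r}\,m[V]
 \leq \frac{\epsilon}{r}\cdot r = \epsilon,
\end{equation*}
so $m(K_\epsilon)\geq 1-\epsilon$. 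Since $K_\epsilon$ does not depend on $m$, the family $\mathcal{P}_{V,r}$ is tight by \autoref{def:lyapunov}'s consequences and the definition of tightness. Pre-compactness then follows immediately from the Prokhorov theorem as stated just before the proposition. To get compactness (not merely pre-compactness), I would add that $\mathcal{P}_{V,r}$ is closed in $\PX$ under weak convergence: if $m_n\to m$ weakly with $m_n[V]\leq r$, then since $V\geq 0$ is continuous, Fatou's lemma for weak convergence (or the portmanteau theorem applied to the non-negative lower-semicontinuous function $V$) gives $m[V]\leq\liminf_n m_n[V]\leq r$, hence $m\in\mathcal{P}_{V,r}$. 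A closed subset of a pre-compact set in the (metrisable) space $\PX$ is compact.

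This argument is essentially routine and I do not expect a genuine obstacle; the only point requiring mild care is the passage from $V_0(\sqrt{1+|x|^2})\to\infty$ to an explicit radius $R'_\epsilon$, which is immediate from monotonicity of $V_0$, and the lower-semicontinuity of $V$ needed for closedness, which holds since $V\in C^2(\X)$ is in particular continuous and non-negative. If one prefers, closedness can be skipped and the statement read as pre-compactness, but including it makes ``compact'' literally correct. The whole proof is a one-paragraph Chebyshev estimate plus a citation of Prokhorov.
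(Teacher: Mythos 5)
Your proof is correct and follows essentially the same route as the paper's: a Chebyshev estimate on the complement of a sublevel set of $V$ gives tightness, and Prokhorov plus closedness (which the paper merely asserts and you justify via lower semicontinuity of $V$ under weak convergence) gives compactness. No issues.
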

 \begin{proof}
  Notice that the set $\mathcal{P}_{V,r}$ is closed.
  Let $\epsilon>0$.
  Since $\lim\limits_{|x|\to\infty}V(x) = \infty$, the set $K_\epsilon = \{x:V(x)\leq \frac{r}{\epsilon}\}$ is compact.
  Then it follows from the Chebyshev inequality that for every $m\in\mathcal{P}_{V,r}$,
  \begin{align*}
   m\big(K_\epsilon^c\big)\ \leq\ \frac{\epsilon}{r}\int_{\{V > \frac{r}{\epsilon}\}}V\,dm
   \ \leq\ \frac{\epsilon}{r}m[V]\ \leq\ \epsilon.
  \end{align*}
  Hence the set $\mathcal{P}_{V,r}$ is tight and thus compact by the Prokhorov theorem.
 \end{proof}
 The reverse statement is also true.
 \begin{lemma}\label{prop:lyapunov}
  If the set $\Pi\subset\PX$ is tight,
  then there exists a Lyapunov function~$V$ such that $m[V]\leq 1$ for every $m\in\Pi$.
 \end{lemma}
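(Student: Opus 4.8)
The plan is to build the Lyapunov function explicitly from an exhausting sequence of compact sets provided by tightness, using the classical "function with unbounded growth" characterization of tightness (as in \cite[Example~8.6.5]{MR2267655}), but with the extra care needed to land inside the restrictive class in \autoref{def:lyapunov} — that is, a function of the form $V(x)=V_0(\sqrt{1+|x|^2})$ with $V_0$ subadditive, non-decreasing, with $\|V_0'\|_\infty,\|V_0''\|_\infty\le1$, and $V_0(r)\to\infty$. First I would use tightness to pick, for each $n\in\N$, a radius $R_n$ (increasing, with $R_n\to\infty$) such that $m(B_{R_n}^c)\le 2^{-n}$ for every $m\in\Pi$; this is just the definition of tightness applied with $\epsilon=2^{-n}$, after enlarging each compact $K_\epsilon$ to a ball containing it.

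Next I would let $r=\sqrt{1+|x|^2}$ and construct $V_0$ so that it grows by roughly a controlled amount on each annulus: the natural choice is to prescribe the derivative $V_0'$ to be a small positive constant $c_n$ on $[R_n,R_{n+1}]$, with $c_n\downarrow0$ slowly enough that $\sum_n c_n(R_{n+1}-R_n)=\infty$ (guaranteeing $V_0\to\infty$) but also so that the total mass estimate closes. Concretely, if $V_0(R_{n+1})-V_0(R_n)=c_n(R_{n+1}-R_n)=:a_n$, then for $m\in\Pi$ one gets
\begin{equation*}
 m[V] \;\le\; V_0(R_1) + \sum_{n\ge1} a_{n+1}\, m\big(B_{R_n}^c\big) \;\le\; V_0(R_1) + \sum_{n\ge1} a_{n+1}\,2^{-n},
\end{equation*}
by splitting $\X$ into $B_{R_1}$ and the annuli $B_{R_{n+1}}\setminus B_{R_n}$ and bounding $V$ on each piece by its value at the outer radius. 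So I only need $\sum_n a_n 2^{-n}$ finite (easy, e.g. $a_n$ bounded or mildly growing) while $\sum_n a_n=\infty$ — both are compatible, e.g. $a_n\equiv a$ for a suitable constant $a$. To get $m[V]\le1$ exactly, I would then rescale: replace $V_0$ by $V_0/C$ where $C$ is the bound just obtained (rescaling preserves all the required properties), or simply choose $a$ small from the start. The derivative bound $\|V_0'\|_\infty\le1$ is ensured by taking $c_1\le1$ and $c_n\downarrow$; subadditivity and monotonicity of $V_0$ follow from $V_0(0)\ge0$, $V_0'\ge0$, and $V_0'$ non-increasing (a non-decreasing concave function vanishing appropriately at $0$ is subadditive).

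The one genuinely delicate point — and the main obstacle — is the $C^2$ requirement together with $\|V_0''\|_\infty\le1$: a piecewise-linear $V_0$ with the jumps in slope described above is only Lipschitz, not $C^2$. I would fix this by mollifying the corners of $V_0'$ on small neighbourhoods of the radii $R_n$, i.e. replacing the step-graph of $V_0'$ by a smooth non-increasing interpolation whose second derivative $|V_0''|$ is controlled by $(c_n-c_{n+1})/(\text{width of transition})$; choosing the transition widths (which I am free to shrink) so that this stays $\le1$ and then slightly decreasing the $c_n$ to absorb the tiny extra growth from smoothing. One also checks $V\in C^2(\X)$ since $x\mapsto\sqrt{1+|x|^2}$ is smooth and $V_0$ is now $C^2$ with bounded first two derivatives, so the chain rule gives $V\in C^2$ with the bounds in \autoref{rem:lyapunov}\autoref{rem:bd-der} (which are exactly what the definition demands). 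Finally $\lim_{|x|\to\infty}V(x)=\lim_{r\to\infty}V_0(r)=\infty$ because $\sum_n c_n(R_{n+1}-R_n)$ still diverges after the small corrections. This yields a Lyapunov function $V$ with $m[V]\le1$ for all $m\in\Pi$, as claimed.
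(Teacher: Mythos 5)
Your proposal is correct, and it reaches the same destination by a genuinely different route. The paper first forms the exact tail function $v_0(t)=\sup_{m\in\Pi}m\{x:|x|\ge t\}$ and proves, via a change of variables and integration by parts on the distribution function, that $\int_\X -\log\big(v_0(|x|)\big)\,m(dx)\le 1$ uniformly over $\Pi$; it then \emph{minorizes} $-\log(v_0)$ by a concave piecewise-affine function with slopes $2^{-n}$, smooths it, and rescales. You instead fix radii $R_n\to\infty$ with $\sup_{m\in\Pi}m(B_{R_n}^c)\le 2^{-n}$ and build $V_0$ directly with prescribed increments $a_n$ on the annuli, bounding $m[V]$ by the layer-cake sum $\sum_n a_n 2^{-n}$. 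Your version is the more elementary, discrete (de la Vall\'ee Poussin-type) construction and avoids the measure-theoretic identity and the minorization step; the paper's version identifies a canonical integrable majorant and yields the bound $1$ without a final normalization. Two points you should make explicit to close the argument: (i) for $c_n=a_n/(R_{n+1}-R_n)$ to be non-increasing --- which is what you invoke for concavity and hence subadditivity --- you must first enlarge the $R_n$ so that the gaps $R_{n+1}-R_n$ are non-decreasing (harmless, since enlarging radii preserves the tail bounds); (ii) the total increase of $V_0$ caused by smoothing the corners of $V_0'$ telescopes, being at most $\sum_n(c_n-c_{n+1})w_n\le c_1\sup_n w_n$, so it is a bounded additive perturbation absorbed by your final rescaling. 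With these details supplied, both proofs face and resolve the same $C^2$/concavity constraints in essentially the same way (compare the explicit cubic transition profile used in the paper's third step).
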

 This result is crucial for our paper and is the reason why our findings 
 hold without moment assumptions.
 The proof is given in \autoref{appendix:comparison}.
 
\subsection{L\'evy operators} In this section we collect some basic observations on L\'evy operators. Recall the representation formula given in \ref{L:levy} in \autoref{subsec:ass}.
\begin{remark}\label{rem:triplets}
If $\int_{B_1} |z|\,\nu(dz)<\infty$, then we may equivalently write 
\begin{align*}
 \fL \phi = \bigg(c-\int_{B_1}z\,\nu(dz)\bigg)\cdot\nabla\phi + \tr\big(aa^T D^2\phi\big) 
 + \int_\X \Big(\phi(x+z)-\phi(x)\Big)\,\nu(dz).
\end{align*}
In particular, we may have $\big(\int_{B_1}z\,\nu(dz),0,\nu\big)$ as a triplet in \ref{L:levy}. 
\end{remark}
 \begin{lemma}\label{lemma:V-equiv}
  Assume~\ref{L:levy} and $V$ is a Lyapunov function.
  The following are equivalent\smallskip
  \begin{enumerate}
 \begin{minipage}{0.36\linewidth}
    \item\label{item:V} $\int_{B_1^c}V(z)\,\nu(dz) <\infty$;
    \item\label{item:fL} $\|\fL V\|_\infty <\infty$;
   \end{minipage}
   \begin{minipage}{0.63\linewidth}
   \vspace{4pt}
    \item\label{item:th1} $\vartheta_1(x) = \int_{B_1^c}\big(V(x+z)-V(x)\big)\,\nu(dz)\in L^\infty(\X)$;
    \item\label{item:th2} $\vartheta_2(x) = \int_{B_1^c}\big|V(x+z)-V(x)\big|\,\nu(dz)\in L^\infty(\X)$.
   \end{minipage}
  \end{enumerate}
 \end{lemma}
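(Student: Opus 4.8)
The strategy is to prove the cycle of implications (\textit{i})~$\Rightarrow$~(\textit{iv})~$\Rightarrow$~(\textit{iii})~$\Rightarrow$~(\textit{ii})~$\Rightarrow$~(\textit{i}), exploiting the fact that a Lyapunov function $V(x)=V_0(\sqrt{1+|x|^2})$ is globally Lipschitz with constant at most $1$ (see \autoref{rem:lyapunov}\autoref{rem:bd-der}), so that $|V(x+z)-V(x)|\leq |z|$ everywhere, and also subadditive, so that $V(x+z)-V(x)\leq V_0(|z|)\leq V(z)$ (up to the additive constant $V_0(1)$ coming from $V_0(\sqrt{1+|z|^2})\leq V_0(|z|)+V_0(1)$). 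These two one-sided bounds are the whole engine of the argument.

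First I would handle (\textit{i})~$\Rightarrow$~(\textit{iv}). Split $\vartheta_2(x)=\int_{B_1^c}|V(x+z)-V(x)|\,\nu(dz)$ according to whether $|z|\leq |x|$ or $|z|> |x|$ — or more simply, just dominate $|V(x+z)-V(x)|$ by $V(x+z)+V(x)$ is too crude; instead use $|V(x+z)-V(x)|\le \min\{|z|,\,V_0(|z|)+V_0(1)\}$... actually the clean bound is: by subadditivity and monotonicity of $V_0$, $V(x+z)\le V_0(|x+z|)+V_0(1)\le V_0(|x|)+V_0(|z|)+V_0(1)$, hence $V(x+z)-V(x)\le V_0(|z|)+V_0(1)+\big(V_0(|x|)-V(x)\big)\le V_0(|z|)+V_0(1)$ since $V_0(|x|)\le V(x)$; symmetrically $V(x)-V(x+z)\le V_0(|z|)+V_0(1)$ by applying the same estimate with the roles of $x$ and $x+z$ swapped. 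Therefore $|V(x+z)-V(x)|\le V_0(|z|)+V_0(1)\le V(z)+V_0(1)$ for $z\in B_1^c$, and integrating against $\nu$ gives $\|\vartheta_2\|_\infty\le \int_{B_1^c}V(z)\,\nu(dz)+V_0(1)\,\nu(B_1^c)$, which is finite by (\textit{i}) and the fact that $\nu(B_1^c)<\infty$ for any L\'evy measure. The implication (\textit{iv})~$\Rightarrow$~(\textit{iii}) is immediate since $|\vartheta_1|\le\vartheta_2$ pointwise.

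For (\textit{iii})~$\Rightarrow$~(\textit{ii}): write $\fL V(x)$ using \ref{L:levy}, separating the integral over $B_1$ and over $B_1^c$. The $B_1^c$-part is exactly $\vartheta_1(x)$, bounded by hypothesis. The drift term $c\cdot\nabla V$ and second-order term $\tr(aa^TD^2V)$ are bounded because $\|\nabla V\|_\infty,\|D^2V\|_\infty\le 1$. The remaining $B_1$-part is $\int_{B_1}(V(x+z)-V(x)-z\cdot\nabla V(x))\,\nu(dz)$, which by a second-order Taylor estimate is bounded by $\tfrac12\|D^2V\|_\infty\int_{B_1}|z|^2\,\nu(dz)\le \tfrac12\int_{B_1}|z|^2\,\nu(dz)<\infty$ by the L\'evy condition. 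Summing, $\|\fL V\|_\infty<\infty$. Finally, (\textit{ii})~$\Rightarrow$~(\textit{i}): run the previous decomposition in reverse. From $\|\fL V\|_\infty<\infty$ and the boundedness of the drift, diffusion, and $B_1$-integral terms just established, we conclude $\vartheta_1\in L^\infty(\X)$; then evaluate $\vartheta_1$ at $x=0$: $\vartheta_1(0)=\int_{B_1^c}(V(z)-V(0))\,\nu(dz)$, so $\int_{B_1^c}V(z)\,\nu(dz)\le \|\vartheta_1\|_\infty+V(0)\,\nu(B_1^c)<\infty$, which is (\textit{i}).

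The only mildly delicate point — the ``main obstacle'' such as it is — is getting both one-sided bounds $|V(x+z)-V(x)|\le V_0(|z|)+V_0(1)$ uniformly in $x$ from subadditivity plus monotonicity, since subadditivity naturally only gives one direction; the trick is to apply it twice, once to $V_0(|x+z|)\le V_0(|x|)+V_0(|z|)$ and once to $V_0(|x|)=V_0(|(x+z)+(-z)|)\le V_0(|x+z|)+V_0(|z|)$, and to absorb the $\sqrt{1+\cdot^2}$ versus $|\cdot|$ discrepancy into the harmless constant $V_0(1)$. Everything else is routine.
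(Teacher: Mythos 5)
Your proof is correct and follows essentially the same route as the paper's: isolate the drift, diffusion, and $B_1$-jump part of $\fL V$ as a uniformly bounded term via $\|\nabla V\|_\infty,\|D^2V\|_\infty\le 1$, relate (\textit{ii}) and (\textit{iii}) through that decomposition, recover (\textit{i}) by evaluating $\vartheta_1$ at the origin, and close the loop from (\textit{i}) to (\textit{iv}) by subadditivity of $V_0$. The only cosmetic difference is that the paper gets the sharper bound $|V(x+z)-V(x)|\le V_0\big(\sqrt{1+|z|^2}\big)=V(z)$ directly from the $1$-Lipschitzness of $x\mapsto\sqrt{1+|x|^2}$, whereas you carry a harmless extra $V_0(1)\,\nu(B_1^c)$ term.
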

 \begin{proof}
 Let 
  $$\vartheta_0(x) = c\cdot \nabla V(x) + \tr\big(aa^T D^2 V(x)\big) 
  + \int_{B_1}\!\! \Big(V(x+z)-V(x)-z\cdot \nabla V(z)\Big)\nu(dz).$$
 Because $V$ is a Lyapunov function (see \autoref{rem:lyapunov}\autoref{rem:bd-der}), we have 
 \begin{align*}
  \|\vartheta_0\|_\infty\leq |c| + |a|^2+ \int_{B_1}|z|^2\,\nu(dz).
 \end{align*}
 Observe that 
 $\|\fL V\|_\infty -\|\vartheta_0\|_\infty \leq \|\vartheta_1\|_\infty \leq \|\vartheta_2\|_\infty$,
 hence \!\autoref{item:th2}$\,\Rightarrow$\autoref{item:th1}$\,\Rightarrow$\autoref{item:fL}.
 We also notice $\|\fL V\|_\infty \geq \|\vartheta_1\|_\infty-\|\vartheta_0\|_\infty$
 and $\int_{B_1^c} V(z)\,\nu(dz) = \vartheta_1(0)+\nu(B_1^c)$,
 thus \autoref{item:fL}$\,\Rightarrow$\autoref{item:th1}$\,\Rightarrow$\autoref{item:V}.

 It remains to prove \!\autoref{item:V}$\,\Rightarrow$\autoref{item:th2}.
 Let $V_0\big(\sqrt{1+|x|^2}\big) = V(x)$ as in \autoref{def:lyapunov} and notice that,
 because $V_0$ is subadditive and non-decreasing, we have
 \begin{align*}
  \big|V(y)-V(x)\big| \leq V_0\Big(\big|\sqrt{1+|y|^2}-\sqrt{1+|x|^2}\big|\Big) 
  \leq V_0\big(\sqrt{1+|y-x|^2}\big).
 \end{align*}
 Now we may estimate
 \begin{align*}
  \int_{B_1^c}\big|V(x+z)-V(x)\big|\,\nu(dz)\leq \int_{B_1^c} V(z)\,\nu(dz).
 \end{align*}
 \end{proof}
 \begin{corollary}\label{rem:levy-lyapunov}
  Assume~\ref{L:levy},~\ref{a:m}.
  There exists a Lyapunov function $V$ such that $m_0[V],\|\fL V\|_\infty <\infty$.
 \end{corollary}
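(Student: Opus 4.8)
The plan is to deduce this from \autoref{prop:lyapunov} and \autoref{lemma:V-equiv}. By \autoref{lemma:V-equiv}, for a Lyapunov function $V$ the bound $\|\fL V\|_\infty<\infty$ is equivalent to $\int_{B_1^c}V(z)\,\nu(dz)<\infty$, so it suffices to produce a single Lyapunov function that is integrable both against $m_0$ and against the restriction $\nu|_{B_1^c}$. The first thing I would note is that $\nu|_{B_1^c}$ is a \emph{finite} measure: on $B_1^c$ we have $1\wedge|z|^2=1$, hence $\nu(B_1^c)=\int_{B_1^c}(1\wedge|z|^2)\,\nu(dz)\leq\int_\X(1\wedge|z|^2)\,\nu(dz)<\infty$ since $\nu$ is a L\'evy measure (see \autoref{def:levy-symmetric}).

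If $\nu(B_1^c)=0$, then trivially $\int_{B_1^c}V\,d\nu=0$ for every Lyapunov function $V$, and it only remains to choose $V$ with $m_0[V]<\infty$; this follows by applying \autoref{prop:lyapunov} to the singleton $\{m_0\}$, which is tight. Otherwise, I would normalise and set $\mu=\nu(B_1^c)^{-1}\,\nu|_{B_1^c}\in\PX$. The finite collection $\{m_0,\mu\}\subset\PX$ is tight (any finite set of probability measures on $\X$ is tight), so \autoref{prop:lyapunov} yields a Lyapunov function $V$ with $m[V]\leq1$ for every $m$ in this collection. In particular $m_0[V]\leq1$ and $\int_{B_1^c}V(z)\,\nu(dz)=\nu(B_1^c)\,\mu[V]\leq\nu(B_1^c)<\infty$.

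In both cases, \autoref{lemma:V-equiv} (the implication \autoref{item:V}$\Rightarrow$\autoref{item:fL}) then gives $\|\fL V\|_\infty<\infty$, and by construction $m_0[V]<\infty$, which is the assertion. I do not expect any serious obstacle: the argument is essentially an assembly of the two preceding lemmas. The only point that requires a moment's thought is the reduction step — recognising, via \autoref{lemma:V-equiv}, that the condition ``$\|\fL V\|_\infty<\infty$'' on $V$ is really a finite-moment-type condition against the \emph{finite} measure $\nu|_{B_1^c}$, after which both constraints on $V$ become exactly the kind of simultaneous moment bound on finitely many probability measures that \autoref{prop:lyapunov} is designed to produce.
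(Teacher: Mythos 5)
Your proposal is correct and follows essentially the same route as the paper: observe that $\mathbbm{1}_{B_1^c}\nu$ is a bounded measure, form a tight family together with $m_0$, invoke \autoref{prop:lyapunov} to get a common Lyapunov function, and conclude via \autoref{lemma:V-equiv}. Your normalisation of $\nu|_{B_1^c}$ to a probability measure is a minor extra care the paper glosses over (since \autoref{prop:lyapunov} is stated for subsets of $\PX$), but it does not change the argument.
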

 \begin{proof}
  Since $\nu|_{B_1^c}$ is a bounded measure,
  the set $\{\nu|_{B_1^c},m_0\}$ is tight.
  Hence, by \autoref{prop:lyapunov} we may find a Lyapunov function
  such that $\int_{B_1^c}V(z)\,\nu(dz) <\infty$ and $m_0[V]<\infty$.
  Thanks to \autoref{lemma:V-equiv}\autoref{item:fL} we also have $\|\fL V\|_\infty <\infty$.
 \end{proof}
 
 Let $\fL$ be a L\'evy operator with triplet $(c,a,\nu)$.
 Denote
 \begin{align}\label{eq:lk}
    \|\fL\|_{\text{\textit{LK}}} = |c| + |a|^2 +  \frac12 \int_{B_1} |z|^2\,\nu(dz) + 2 \nu(B_1^c).
 \end{align}\vspace{-\baselineskip}
\begin{proposition}\label{prop:lx}
Assume~\ref{L:levy}, $\phi \in C^2_b(\X)$. Then
    $\|\fL \phi\|_{\infty} \leq   \|\fL\|_{\text{\textit{LK}}} \|\phi\|_{C^2_b(\X)}$.
\end{proposition}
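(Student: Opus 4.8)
The plan is to estimate each of the three terms in the L\'evy operator~\ref{L:levy} separately and combine them. Writing
\[
 \fL\phi(x) = c\cdot\nabla\phi(x) + \tr\big(aa^T D^2\phi(x)\big) + \int_\X\Big(\phi(x+z)-\phi(x)-\mathbbm 1_{B_1}(z)\,z\cdot\nabla\phi(x)\Big)\,\nu(dz),
\]
I would bound the drift term by $|c\cdot\nabla\phi(x)|\leq |c|\,\|\nabla\phi\|_\infty\leq|c|\,\|\phi\|_{C^2_b(\X)}$, and the diffusion term by $|\tr(aa^TD^2\phi(x))|\leq |a|^2\|D^2\phi\|_\infty\leq|a|^2\|\phi\|_{C^2_b(\X)}$, where the matrix norms are chosen compatibly (using $|\tr(AB)|\leq |A|\,|B|$ with the Frobenius or operator norm).

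For the nonlocal term I would split the integral at $|z|=1$. On $B_1$, a second-order Taylor expansion gives $|\phi(x+z)-\phi(x)-z\cdot\nabla\phi(x)|\leq\frac12\|D^2\phi\|_\infty|z|^2$, so this part is bounded by $\frac12\|\phi\|_{C^2_b(\X)}\int_{B_1}|z|^2\,\nu(dz)$. On $B_1^c$, the compensator vanishes and we simply use $|\phi(x+z)-\phi(x)|\leq 2\|\phi\|_\infty$, giving a bound of $2\|\phi\|_\infty\,\nu(B_1^c)\leq 2\nu(B_1^c)\|\phi\|_{C^2_b(\X)}$. Here $\nu(B_1^c)<\infty$ because $\nu$ is a L\'evy measure (\autoref{def:levy-symmetric}). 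Summing the four contributions and recognising the right-hand side as exactly $\|\fL\|_{\text{\textit{LK}}}\|\phi\|_{C^2_b(\X)}$ from~\eqref{eq:lk} completes the argument.

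There is no serious obstacle here; the only point requiring a little care is ensuring all the matrix/vector norms and the constant $\|\cdot\|_{C^2_b(\X)}$ are defined so that the elementary inequalities $|\tr(aa^TD^2\phi)|\leq|a|^2\|D^2\phi\|_\infty$ and the Taylor remainder estimate hold with the stated constants. Since these conventions are fixed implicitly by the statement (the constants in~\eqref{eq:lk} are chosen precisely to make the bound come out clean), the proof is a short routine computation.
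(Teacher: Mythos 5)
Your proposal is correct and follows exactly the paper's own argument: bound the drift and diffusion terms by $|c|\,\|\nabla\phi\|_\infty$ and $|a|^2\|D^2\phi\|_\infty$, split the integral at $|z|=1$, use the second-order Taylor remainder on $B_1$, and the crude bound $2\|\phi\|_\infty\,\nu(B_1^c)$ on the tail. The sum of these four contributions is precisely $\|\fL\|_{\text{\textit{LK}}}\|\phi\|_{C^2_b(\X)}$ as defined in \eqref{eq:lk}, so nothing further is needed.
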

\begin{proof}
Using the Taylor expansion, we calculate
\begin{align*}
&\|\fL \phi\|_{\infty} \leq  |c|\|\nabla\phi\|_{\infty} + |a|^2 \|D^2 \phi\|_{\infty} \\
&\quad+ \Big|\int_\X \big(\phi(x+z)-\phi(x) - \mathbbm{1}_{B_1}(z)\,z\cdot \nabla\phi(x)\big)\,\nu(dz) \Big| \\
&\leq  |c|\|\nabla\phi\|_{\infty} + |a|^2 \|D^2 \phi\|_{\infty} 
+ \frac{\|D^2\phi\|_{\infty}}2 \int_{B_1} |z|^2\,\nu(dz) + 2\|\phi\|_{\infty} \nu(B_1^c).
\end{align*}
\end{proof}
\begin{remark}
The mapping $\fL\mapsto \|\fL\|_{\text{\textit{LK}}}$ is a norm on the space (convex cone) of L\'evy operators.
It dominates the operator norm $C^2_b(\X)\to C_b(\X)$, but they are not equivalent.
\end{remark}
 \begin{lemma}\label{lemma:approx}
  Assume~\ref{L:levy}.
  For $\epsilon\in(0,1)$ there exist $\fL^\epsilon$, $\nu^\epsilon$ such that
  \begin{align}\label{eq:fl-1}
   \fL^\epsilon\mu(x) = \int_\X\big(\mu(x+z) -\mu(x)\big) \, \nu^{\epsilon}(dz),
  \end{align}
  where \mbox{$\fL^\epsilon: L^1(\X)\to L^1(\X)$}, $\nu^\epsilon(\X)<\infty$ and $\supp\nu^\epsilon \subset \X\setminus B_\epsilon$.
  Moreover,\smallskip
  \begin{enumerate}
      \item\label{item:eps-cube}
       $\|\fL^\epsilon \mu\|_{L^1(\X)}\leq \big({c_\fL}/{\epsilon^3}\big) \|\mu\|_{L^1(\X)}$
       for a~constant $c_\fL>0$;
      \item\label{item:fl-convergence}
       $\lim\limits_{\epsilon\to0}\|\fL^\epsilon\varphi -\fL\varphi\|_\infty = 0$
       for every $\varphi\in C_c^\infty(\X)$;
      \item\label{item:fl-bound}
       $\sup_{\epsilon\in(0,1)}\big(\|\fL^\epsilon V\|_\infty+\|\fL^\epsilon\|_{\text{\textit{LK}}}\big)<\infty$
       for every Lyapunov function $V$ such that \mbox{$\|\fL V\|_\infty<\infty$}.
  \end{enumerate}
 \end{lemma}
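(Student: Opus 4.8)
The plan is to realise $\fL^\epsilon$ as a pure-jump (compound Poisson) operator, taking $\nu^\epsilon$ to be a sum of four nonnegative finite measures, each supported in $\X\setminus B_\epsilon$. First diagonalise $aa^T=\sum_{k=1}^d\lambda_kq_kq_k^T$ with $\lambda_k\geq0$ and $\{q_k\}$ orthonormal (possible since $aa^T$ is symmetric positive semi-definite), so that $\tr(aa^TD^2\phi)=\sum_k\lambda_k\,q_k^TD^2\phi\,q_k$, and set $M_\epsilon=\int_{\{\epsilon\leq|z|<1\}}|z|\,\nu(dz)$, which is finite since $M_\epsilon\leq\epsilon^{-1}\int_{B_1}|z|^2\,\nu(dz)$. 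The four pieces of $\nu^\epsilon$ are: a \emph{diffusion} piece $\sum_k\frac{\lambda_k}{\epsilon^2}\bigl(\delta_{\epsilon q_k}+\delta_{-\epsilon q_k}\bigr)$, producing the classical second-order central difference $\frac{\lambda_k}{\epsilon^2}\bigl(\phi(x+\epsilon q_k)+\phi(x-\epsilon q_k)-2\phi(x)\bigr)$; a \emph{drift} piece $\frac{|c|}{\epsilon}\delta_{\epsilon c/|c|}$ (omitted if $c=0$); the untouched \emph{large jumps} $\nu|_{B_\epsilon^c}$; and a \emph{compensator} piece, the push-forward of $\frac{|z|}{\epsilon}\,\nu|_{\{\epsilon\leq|z|<1\}}$ under $z\mapsto-\frac{\epsilon}{|z|}z$, producing $\int_{\{\epsilon\leq|z|<1\}}\frac{|z|}{\epsilon}\bigl(\phi(x-\tfrac{\epsilon}{|z|}z)-\phi(x)\bigr)\,\nu(dz)$. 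All four live on $\{|z|\geq\epsilon\}$, so $\fL^\epsilon$ has the form \eqref{eq:fl-1}. Writing the large jumps as $\nu|_{\{\epsilon\leq|z|<1\}}+\nu|_{B_1^c}$ and using the $B_\epsilon$-compensated form of $\fL$, a direct cancellation gives that $\fL^\epsilon\phi-\fL\phi$ equals $E_1+E_2+E_3-\int_{B_\epsilon}\bigl(\phi(x+z)-\phi(x)-z\cdot\nabla\phi(x)\bigr)\,\nu(dz)$, where $E_1,E_2,E_3$ are the errors by which the diffusion, drift and compensator pieces fail to reproduce $\tr(aa^TD^2\phi)$, $c\cdot\nabla\phi$ and $-\int_{\{\epsilon\leq|z|<1\}}z\cdot\nabla\phi\,\nu(dz)$.

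For \autoref{item:eps-cube} I would just bound the total mass: $\nu^\epsilon(\X)\leq 2|a|^2\epsilon^{-2}+|c|\epsilon^{-1}+M_\epsilon\epsilon^{-1}+\nu(B_\epsilon^c)$, and using $M_\epsilon\leq\epsilon^{-1}\int_{B_1}|z|^2\nu$ and $\nu(B_\epsilon^c)\leq\nu(B_1^c)+\epsilon^{-2}\int_{B_1}|z|^2\nu$ this is $\leq\tfrac12 c_\fL\,\epsilon^{-3}$ for a suitable constant $c_\fL$ (in fact it is $O(\epsilon^{-2})$); since $\|\fL^\epsilon\mu\|_{L^1(\X)}\leq 2\,\nu^\epsilon(\X)\,\|\mu\|_{L^1(\X)}$, this gives \autoref{item:eps-cube}. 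For \autoref{item:fl-convergence}, fix $\varphi\in C_c^\infty(\X)$ and Taylor-expand: routinely $E_1$ and $E_2$ are of orders $\epsilon^2\|D^4\varphi\|_\infty$ and $\epsilon|c|\,\|D^2\varphi\|_\infty$, and the discarded $B_\epsilon$-integral is at most $\tfrac12\|D^2\varphi\|_\infty\int_{B_\epsilon}|z|^2\nu\to0$. The only new ingredient is $E_3$: from $\bigl|\frac{|z|}{\epsilon}\bigl(\varphi(x-\tfrac{\epsilon}{|z|}z)-\varphi(x)\bigr)+z\cdot\nabla\varphi(x)\bigr|\leq\tfrac12\epsilon|z|\,\|D^2\varphi\|_\infty$ (second-order Taylor at a point at distance exactly $\epsilon$) one gets $\|E_3\|_\infty\leq\tfrac12\,\epsilon M_\epsilon\,\|D^2\varphi\|_\infty$, so the whole of \autoref{item:fl-convergence} reduces to $\epsilon M_\epsilon\to0$, which I would get from $M_\epsilon\leq\epsilon^{-1}\int_{B_\delta}|z|^2\nu+\nu(\{\delta\leq|z|<1\})$, hence $\limsup_{\epsilon\to0}\epsilon M_\epsilon\leq\int_{B_\delta}|z|^2\nu$, and then $\delta\to0$. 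I expect $\epsilon M_\epsilon\to0$ to be the crux: when $\int_{B_1}|z|\,\nu=\infty$ the ``effective drift'' $c-\int_{\{\epsilon\leq|z|<1\}}z\,\nu$ is unbounded as $\epsilon\to0$ and so cannot be approximated by a single nonnegative jump of bounded mass placed at distance $\geq\epsilon$; placing the compensator jumps at distance exactly $\epsilon$ but \emph{in the direction of $z$} is what turns their error contribution into $O(\epsilon M_\epsilon)$, and Lévy integrability $\int_{B_1}|z|^2\nu<\infty$ is what makes this vanish.

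For \autoref{item:fl-bound}, note first that the canonical triplet of $\fL^\epsilon$ is $(c,0,\nu^\epsilon)$ --- the symmetric diffusion jumps carry no drift and the compensator cancels the drift created by the $\{\epsilon\leq|z|<1\}$ jumps --- so $\|\fL^\epsilon\|_{\text{\textit{LK}}}=|c|+\tfrac12\int_{B_1}|z|^2\nu^\epsilon+2\nu(B_1^c)$, and $\int_{B_1}|z|^2\nu^\epsilon=2|a|^2+|c|\epsilon+\epsilon M_\epsilon+\int_{\{\epsilon\leq|z|<1\}}|z|^2\nu$ is bounded uniformly in $\epsilon\in(0,1)$ (using $\epsilon M_\epsilon\leq\int_{B_1}|z|^2\nu$). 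For $\|\fL^\epsilon V\|_\infty$ I would use $\|\nabla V\|_\infty,\|D^2V\|_\infty\leq1$ (\autoref{rem:lyapunov}\autoref{rem:bd-der}) to bound the diffusion and drift contributions by $|a|^2$ and $|c|$, and $|V(x+z)-V(x)|\leq V(z)$ together with \autoref{lemma:V-equiv} (applicable since $\|\fL V\|_\infty<\infty$) to bound the $B_1^c$-jumps by $\int_{B_1^c}V(z)\,\nu(dz)<\infty$. The one subtlety is that the remaining two contributions --- the exact jumps over $\{\epsilon\leq|z|<1\}$ and the compensator piece --- are each only $O(M_\epsilon)$ and hence individually unbounded, but their \emph{sum} equals $\int_{\{\epsilon\leq|z|<1\}}\bigl(V(x+z)-V(x)-z\cdot\nabla V(x)\bigr)\,\nu(dz)$ plus an $O(\epsilon M_\epsilon)$ remainder, so it is controlled by $\tfrac12\int_{B_1}|z|^2\nu$; grouping these two pieces before estimating is the only place care is needed.
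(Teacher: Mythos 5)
Your construction is correct and essentially the paper's: the same four-piece measure (drift and diffusion point masses at distance $\epsilon$, the restriction $\nu|_{B_\epsilon^c}$, and a compensating measure), the same Taylor-expansion error analysis for parts (\textit{i}) and (\textit{ii}), and the same grouping of the $\{\epsilon\le|z|<1\}$ jumps with the compensator before estimating in part (\textit{iii}). The only deviation is that you place the compensator mass at $-\epsilon z/|z|$ with weight $|z|/\epsilon$ rather than at $-\epsilon z$ with weight $1/\epsilon$, which costs you the extra (correctly handled) step $\epsilon M_\epsilon\to 0$ but, unlike the paper's choice, genuinely keeps $\supp\nu^\epsilon$ outside $B_\epsilon$.
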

 \begin{proof*}
 \begin{part*}
 Let $(c,a,\nu)$ be the L\'evy triplet of $\fL$ and $a= (a_1, \ldots, a_d) \in\R^{d\times d}$
 with $a_i \in\X$.
 Consider $\nu^{\epsilon} = \nu_c^\epsilon+\nu_a^\epsilon+\nu_1^\epsilon+\nu_2^\epsilon$, where
 \begin{align*}
 \nu_c^\epsilon &= \frac{|c|}{\epsilon}\delta_{\epsilon\frac{c_{}}{|c|}}, 
 &\nu^\epsilon_1(E) &= \nu(E\setminus B_{\epsilon}),\\
 \nu_a^\epsilon &= \sum_{i=1}^d\frac{|a_i|^2}{\epsilon^2}(\delta_{\epsilon \frac{a_i}{|a_i|}}+\delta_{-\epsilon \frac{a_i}{|a_i|}}),\;\;
 &\nu^\epsilon_2(E) &= \frac{1}{\epsilon}\nu\Big(\big(B_{1}\setminus B_{\epsilon}\big) \cap (-E/\epsilon)\Big),
 \end{align*}
 Notice that $\nu^{\epsilon}$ is a bounded, non-negative measure
 with $\supp\nu^\epsilon\subset \X\setminus B_\epsilon$
 (hence a L\'evy measure).
 Let $\fL^\epsilon = \fL^\epsilon_{\loc}+\fL^\epsilon_{\nloc}$, where, for $\mu\in L^1(\X)$,
 \begin{multline*}
  \fL^\epsilon_{\loc}\mu(x) = \int_\X\big(\mu(x+z)-\mu(x)\big)\,(\nu_c^\epsilon+\nu_a^\epsilon)(dz) \\
  = \frac{|c|}{\epsilon}\Big({\mu\big(x+\epsilon \tfrac{c}{|c|}\big)-\mu(x)}\Big)
   +\sum_{i=1}^d\frac{|a_i|^2}{\epsilon^2}\Big(\mu\big(x+\epsilon \tfrac{a_i}{|a_i|}\big)+\mu\big(x-\epsilon \tfrac{a_i}{|a_i|}\big)-2\mu(x)\Big).
 \end{multline*}
 and
 \begin{align*}
  \fL^\epsilon_{\nloc}\mu &=  \int_\X \big(\mu(x+z)-\mu(x)\big)\,(\nu_1^\epsilon+\nu_2^\epsilon)(dz)\\  
  &=\int_{B_\epsilon^c}\bigg(\mu(x+z)-\mu(x)
   +\mathbbm{1}_{B_1}(z)\frac{\mu(x-\epsilon z)-\mu(x)}{\epsilon}  \bigg)\,\nu(dz).
 \end{align*}
 Note that
 \begin{align*}
  \nu_1^\epsilon(B_1\setminus B_\epsilon)+\nu_2^\epsilon(\X)=(1+\epsilon^{-1})\nu(B_1\setminus B_{\epsilon})
  \leq (\epsilon^{-2}+\epsilon^{-3})\int_{B_1}|z|^2\,\nu(dz),
 \end{align*}
 and hence
 \begin{align*}
     \|\fL^\epsilon\mu\|_{L^1(\X)}&\leq \bigg(\frac{2|c|}{\epsilon}+\frac{4|a|^2}{\epsilon^2}+2\nu(B_1^c)
       + \frac{2+2\epsilon}{\epsilon^3} \int_{B_1}|z|^2\,\nu(dz)\bigg)\|\mu\|_{L^1(\X)}\\
     &\leq \frac{4}{\epsilon^3}\bigg(|c|+|a|^2+\int_\X\big(1\wedge|z|^2\big)\,\nu(dz)\bigg)\|\mu\|_{L^1(\X)}.
 \end{align*}
 This shows that $\fL^\epsilon: L^1(\X)\to L^1(\X)$ 
 and $\|\fL^\epsilon \mu\|_{L^1(\X)}\leq \big({c_\fL}/{\epsilon^3}\big) \|\mu\|_{L^1(\X)}$.
 \end{part*}
 \begin{part*}
 For every $\varphi\in C_c^\infty(\X)$, by using the Taylor expansion and the Cauchy--Schwarz inequality (for the third-order remainder), we get
 \begin{align}\label{eq:estimate-local-approx}
    \Big|\Big(\fL^\epsilon_{\loc}-c\cdot\nabla-\tr\big(aa^T D^2(\,\cdot\,)\big)\Big)\varphi(x)\Big|
    \leq\epsilon\bigg(\frac{|c|}{2}\|D^2 \varphi\|_{\infty}
    +|a|^2\|D^3 \varphi\|_{\infty}  \bigg).
 \end{align}
 Let 
  $\fL_\nu \varphi (x) = \int_\X \big( \varphi(x+z) -\varphi(x) 
  - \mathbbm{1}_{B_1}(z)\,z\cdot \nabla\varphi(x) \big)\,\nu(dz)$.
 Then
 \begin{align}\label{eq:estimate-nonlocal-approx}
  \begin{split}
    \Big|\big(\fL^\epsilon_{\nloc} - \fL_\nu\big)\varphi(x) \Big| 
   & = \bigg| \int_{B_1\setminus B_\epsilon} \bigg(\frac{\varphi(x-\epsilon z) -\varphi(x)}{\epsilon} 
   + z\cdot \nabla\varphi(x) \bigg)\,\nu(dz) \\
   &\qquad\qquad-\int_{B_\epsilon}\Big(\varphi(x+z)-\varphi(x)-z\cdot\nabla\varphi(x)\Big)\,\nu(dz)\bigg| \\
   & \leq \frac{\epsilon}{2} \|D^2\varphi\|_{\infty} \int_{B_1} |z|^2\,\nu(dz) 
   + \frac{1}{2} \|D^2 \varphi\|_{\infty} \int_{B_\epsilon} |z|^2\,\nu(dz).
  \end{split}
 \end{align}
 Since $\lim\limits_{\epsilon\to0}\int_{B_\epsilon} |z|^2\,\nu(dz)=0$
 by the Lebesgue dominated convergence theorem,
 it follows from~\eqref{eq:estimate-local-approx} and~\eqref{eq:estimate-nonlocal-approx} that 
  $\lim_{\epsilon \to 0}\|(\fL^\epsilon -\fL)\varphi\|_{\infty} =0$.
\end{part*}
\begin{part*}
 Let $V$ be a Lyapunov function such that $\|\fL V\|_\infty <\infty$.
 Then also $\|\fL_\nu V\|_\infty <\infty$.
 By the definition of $\fL^\epsilon=\fL^\epsilon_{\loc}+\fL^\epsilon_{\nloc}$,
 in a way similar to \eqref{eq:estimate-local-approx}, \eqref{eq:estimate-nonlocal-approx},
 \begin{align*}
  \|\fL^\epsilon V\|_\infty 
  \leq |c|\|\nabla V\|_\infty  +  |a|^2 \|D^2 V\|_\infty + \|D^2 V\|_\infty \int_{B_1} |z|^2\,\nu(dz)
  + \|\fL_\nu V\|_\infty.
 \end{align*}
 Thus $\sup_{\epsilon\in(0,1)}\|\fL^\epsilon V\|_\infty <\infty$.
 Notice that
 \begin{align*}
  \int_{B_1} z \,\nu_c^\epsilon(dz)=c,
  \quad\int_{B_1} z \,\nu_a^\epsilon(dz)=0,
  \quad\text{and}
  \quad\int_{B_1} z \,(\nu_1^\epsilon+\nu_2^\epsilon)(dz) = 0,
 \end{align*}
 thus the L\'evy triplet of the operator $\fL^\epsilon$ is $(c,0,\nu_\epsilon)$
 (see \autoref{rem:triplets}).
 Hence
 \begin{align*}
     \|\fL^\epsilon\|_{\text{\textit{LK}}} & = |c| + \frac{\epsilon|c|}{2} + |a|^2
     + \frac12\int_{B_1\setminus  B_\epsilon} (1+\epsilon)|z|^2\,\nu(dz) +2\nu(B_1^c)\\
     &\leq (1+\epsilon)\|\fL\|_{\text{\textit{LK}}}.
 \end{align*}
 \end{part*}
 \vspace{-2\baselineskip}
 \end{proof*}
 \end{section}
\begin{section}{Hamilton--Jacobi--Bellman equations}\label{sec:HJB}
 In this section we define viscosity solutions and give results for \autoref{eq:hjb}.
 Let $(t,x,\ell)\mapsto \mathcal{F}\big(t,x,\ell)$ and $w_0$ be continuous functions,
 and $\mathcal{F}$ be non-decreasing in $\ell$.
 For $\fL$ satisfying~\ref{L:levy} with $a=0$,\footnote{We take $a=0$
 for simplicity and to use the results of \cite{MR3592657}.}
 consider the following problem
 \begin{align}\label{eq:viscosity}
  \left\{\begin{aligned}
   \dt w&= \mathcal{F}\big(t,x,(\fL w)(t,x)\big),\quad&\text{on $\T\times\X$},\\
   w(0) &= w_0,\quad&\text{on $\X$}.
  \end{aligned}\right.
 \end{align}
 For $0\leq r<\infty$ and $p\in\X$ we introduce linear operators
 \begin{align*}
 \begin{split}
  \fLhigh (\phi,p)(x) 
  &= \int _{B_r^c}\Big(\phi(x+z)-\phi(x)-\mathbbm{1}_{B_1}(z)\,z\cdot p\Big)\,\nu(dz),\\
  \fLlow \phi(x)  &= \int _{B_r} \Big(\phi(x+z)-\phi(x)-
  \mathbbm{1}_{B_1}(z)\,z\cdot\nabla\phi(x)\Big)\,\nu(dz),
  \end{split}
 \end{align*}
 defined for
 bounded semicontinuous and $C^2$ functions respectively.
 
 \begin{definition}\label{def:viscosity}
  A bounded upper-semicontinuous function $u^-:\Tb\times\X\to\R$ is a \textit{viscosity subsolution}
  of \autoref{eq:viscosity} if 
   $u^-(0,x)\leq w_0(x)$ for every $x\in\X$
   and for every $r\in(0,1)$, test function $\phi\in C^2\big(\T\times\X\big)$,
   and a maximum point $(t,x)$ of $u^--\phi$,
   \begin{align*}
    \dt \phi(t,x) - \mathcal{F}\Big(t,x,\big(c\cdot \nabla\phi 
    + \fLhigh \big(u^-,\nabla\phi(t,x)\big)+\fLlow\phi\big)(t,x)\Big) \leq 0.
   \end{align*}
 \end{definition}
  A supersolution is defined similarly, replacing max, upper-semicontinuous, and ``$\leq$'' by min, lower-semicontinuous, and ``$\geq$''. A viscosity solution is a sub- and supersolution at the same time. Note that bounded classical solutions are also bounded viscosity solutions.
\begin{definition}\label{def:comparison}
  The comparison principle holds for \autoref{eq:viscosity}
  if any subsolution $u^-$ and supersolution $u^+$
  satisfy $u^-(t,x)\leq u^+(t,x)$ for every  $(t,x)\in\Tb\times\X$.
 \end{definition}
We have the following uniqueness, stability, and existence result for viscosity solutions of \autoref{eq:hjb}.
 \begin{theorem}\label{thm:hjb-viscosity}
  Assume~\ref{L:levy},~\ref{a:F1}, and $(f,\uT)$
  are bounded and continuous.\smallskip
   \begin{enumerate}
\item The comparison principle (see \autoref{def:comparison}) holds for \autoref{eq:hjb}.
\item\label{item:cp} Let $u_1,u_2$ be viscosity solutions of \autoref{eq:hjb}
  with bounded uniformly continuous data $(f_1,\unT{1})$, $(f_2,\unT{2})$, respectively.
  Then for every $t\in\Tb$,
    \begin{align*}\hspace{-.5\mathindent}
     \|u_1(t)-u_2(t)\|_{\infty} \leq (T-t) \|f_1-f_2\|_{\infty} + \|\unT{1} - \unT{2}\|_{\infty}.
    \end{align*} 
\item\label{item:vs-ex} There exists a unique viscosity solution of \autoref{eq:hjb}.   
\end{enumerate}
  \end{theorem}
 \begin{proof}
 \begin{part*}\label{part:cp}
 In the nonlocal case ($a=0$) with uniformly continuous $f,u_0$, this is \cite[Theorem~6.1]{MR3592657}. In the general case the result follows from a standard but long and tedious combination of the arguments of \cite{MR3592657} and \cite{MR2129093}. We omit this proof.
 \end{part*}
 \begin{part*}
  Note that for $\{i,j\}=\{1,2\}$,
    \begin{align*}v_i(t,x) = u_j(t,x) - (T-t)\|f_1-f_2\|_{\infty} - \|\unT{1}-\unT{2}\|_{\infty}\end{align*}
  is a viscosity subsolution of \autoref{eq:hjb} with data $(f_i,\unT{i})$. The result then follows from the comparison principle in \autoref{part:cp}. 
\end{part*}
\begin{part*}
 \autoref{part:cp} entails uniqueness of viscosity solutions.
 It also implies existence of solutions through the Perron method (cf.~\cite[Section 4]{MR1118699}). See also \cite[Theorems~6.2]{MR3592657} for the result when $a=0$.
 \vspace{-14pt}\[\ \]\vspace{-\belowdisplayskip}
 \end{part*}
 \end{proof}\smallskip

 Now we give results that are specific for non-degenerate cases of
 \autoref{eq:hjb}, which correspond to the setting of Sections \ref{sec:ndeg-loc} and \ref{sec:ndeg-nloc}.
 \begin{proposition}\label{lemma:hjb-dt-bound}
   Assume \ref{L:levy}, 
  \ref{a:F1}, and $u$ is a viscosity solution of \autoref{eq:hjb}
  with bounded uniformly continuous data $(f,\uT)$ such that
  $\dt f \in L^\infty\big(\T\times\X\big)$ and $\fL \uT\in L^\infty(\X)$.
  Then $\dt u \in L^\infty\big(\T\times\X\big)$ and 
  \begin{align*}
     \|\dt u(t)\|_\infty \leq (T-t)\|\dt f\|_\infty + \|F(\fL \uT)\|_\infty +\|f\|_\infty.
  \end{align*}
 \end{proposition}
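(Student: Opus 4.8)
The plan is to reduce the estimate to a uniform‑in‑$x$ Lipschitz bound in time for $u$, obtained from the comparison principle, and then to differentiate. The first ingredient is the barrier estimate $\|u(t)-\uT\|_\infty \le (T-t)\big(\|F(\fL\uT)\|_\infty+\|f\|_\infty\big)$ for $t\in\Tb$. To get it, set $C:=\|F(\fL\uT)\|_\infty+\|f\|_\infty$ and $\psi^\pm(t,x):=\uT(x)\pm(T-t)C$; since a L\'evy operator annihilates functions that are constant in $x$, $\fL\psi^\pm=\fL\uT$ and $-\dt\psi^\pm=\pm C$, so $-\dt\psi^+\ge F(\fL\psi^+)+f$ and $-\dt\psi^-\le F(\fL\psi^-)+f$ pointwise, with $\psi^\pm(T,\cdot)=\uT=u(T,\cdot)$. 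Hence $\psi^+$ and $\psi^-$ are a super‑ and a subsolution of \eqref{eq:hjb} with data $(f,\uT)$, and the comparison principle (\autoref{thm:hjb-viscosity}\autoref{item:comparison} when $a=0$, classical theory when $\nu=0$) gives $\psi^-\le u\le\psi^+$. Since $\uT$ is only bounded uniformly continuous with $\fL\uT\in L^\infty(\X)$, I would make this rigorous by mollification: $\uT_\delta:=\uT*\rho_\delta\in C^\infty_b(\X)$ satisfies $\fL\uT_\delta=(\fL\uT)*\rho_\delta$ by translation invariance of $\fL$ (so no sup norm is increased), the functions $\uT_\delta\pm(T-t)C\pm\|\uT-\uT_\delta\|_\infty$ are then genuine classical barriers, and letting $\delta\to0$ yields the estimate.

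For the Lipschitz bound, fix $h\in(0,T)$ and set $w(r,x):=u(r+h,x)$ on $[0,T-h]\times\X$. By time translation, $w$ is a viscosity solution of \eqref{eq:hjb} on $(0,T-h)\times\X$ with source $f(\cdot+h,\cdot)$ and terminal value $w(T-h)=u(T)=\uT$, whereas $u|_{[0,T-h]}$ solves the same equation with source $f$ and terminal value $u(T-h)$. The comparison estimate of \autoref{cor:hjb-uc} holds on $(0,T-h)$ as well (its proof is insensitive to the length of the time interval); combined with $\|f(\cdot+h,\cdot)-f\|_\infty\le h\|\dt f\|_\infty$ and the barrier estimate at time $T-h$, namely $\|u(T-h)-\uT\|_\infty\le h(\|F(\fL\uT)\|_\infty+\|f\|_\infty)$, it yields
\[
 \|u(r+h)-u(r)\|_\infty \le (T-h-r)\,h\,\|\dt f\|_\infty + h\big(\|F(\fL\uT)\|_\infty+\|f\|_\infty\big),\qquad r\in[0,T-h].
\]
In particular $t\mapsto u(t,x)$ is Lipschitz, uniformly in $x$, so $u$ is $t$‑differentiable a.e.\ on $\T\times\X$ (by the Rademacher and Fubini theorems, as in the proof of \autoref{lemma:classical}), $\dt u\in L^\infty(\T\times\X)$, and dividing the last display by $h$ and letting $h\to0^+$ gives $\|\dt u(t)\|_\infty\le(T-t)\|\dt f\|_\infty+\|F(\fL\uT)\|_\infty+\|f\|_\infty$ for a.e.\ $t\in\T$.

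The main obstacle is the first step: the low regularity of the terminal datum ($\uT$ merely bounded uniformly continuous with $\fL\uT\in L^\infty$) means the constant‑in‑time barriers $\uT\pm(T-t)C$ cannot be inserted directly into the viscosity‑solution framework, and the mollification argument — which works precisely because $\fL$ commutes with convolution, so that $\|\fL\uT_\delta\|_\infty\le\|\fL\uT\|_\infty$ — is what makes it go through; the rest is a routine application of comparison.
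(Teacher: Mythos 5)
Your proof is correct and follows essentially the same route as the paper's: both establish $\|u(T-h)-\uT\|_\infty\leq h\big(\|F(\fL\uT)\|_\infty+\|f\|_\infty\big)$ by comparing $u$ with (mollified) time-constant barriers, and then obtain the time-Lipschitz bound by comparing $u$ with its time translate via \autoref{cor:hjb-uc}; your explicit sub/supersolutions $\uT_\delta\pm(T-t)C\pm\|\uT-\uT_\delta\|_\infty$ are just an unpacked version of the paper's observation that $v_\epsilon(t,x)=\uT_\epsilon(x)$ solves \autoref{eq:hjb} with data $(-F(\fL\uT_\epsilon),\uT_\epsilon)$. The only cosmetic differences are the direction of the time shift and your added (welcome) justification of a.e.\ $t$-differentiability via Rademacher--Fubini.
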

 \begin{proof}
   Take $h>0$ and $\uT_\epsilon = \uT*\rho_\epsilon$, where $\rho_\epsilon$ is the standard mollifier.
   Note that $v_\epsilon(t,x)=\uT_\epsilon(x)$ is a viscosity (classical) solution of \autoref{eq:hjb}
   with data $(-F(\fL\uT_\epsilon),\uT_\epsilon)$,
   hence by \autoref{thm:hjb-viscosity}\autoref{item:cp},
   \begin{align*}
       \|u(T-h)-\uT\|_\infty \leq h\|F(\fL\uT_\epsilon)+f\|_\infty + 2\|\uT_\epsilon-\uT\|_\infty.
   \end{align*}
   By \ref{a:F1}, $\|F(\fL\uT_\epsilon)\|_\infty\leq \|F(\fL\uT)\|_\infty$, and because $\uT\in\BUC$,
   $\|\uT_\epsilon-\uT\|_\infty$ can be arbitrarily small.
   Thus,
   \begin{align*}
       \|u(T-h)-u(T)\|_\infty \leq h\big(\|F(\fL\uT)\|_\infty +\|f\|_\infty\big).
   \end{align*}
   
   Similarly, $v_h(t,x) = u(t-h,x)$ is a viscosity solution of \autoref{eq:hjb} with
   data $(f(\,\cdot\,-h),u(T-h))$, thus for every $t\in\T$,
   \begin{align*}
       \|u(t)-v_h(t)\|_\infty &\leq (T-t)\|f(\,\cdot\,)-f(\,\cdot\,-h)\|_\infty + \|u(T-h)-u(T)\|_\infty\\
       &\leq (T-t)\|\dt f\|_\infty h + \|F(\fL\uT)+f\|_\infty h.
   \end{align*}
  Hence $u$ is Lipschitz in time.
  \end{proof}
 \begin{theorem}\label{thm:hjb-interior}
 Assume \ref{L:ndeg-loc} or \ref{L:ndeg-nloc}, $(f,\uT)\in \cD_0(\hd,M)$ (as in \ref{D:ndeg}), \ref{F:ndeg}, \ref{a:F2}, and
 interior $(\frac{\hd}{2\sigma},\hd)$-regularity estimates (\autoref{def:interior-reg}) hold for \autoref{eq:hjb}.\smallskip 
  \begin{enumerate}
      \item\label{item:unique-local}
      There exists a bounded classical solution $u$ of \autoref{eq:hjb}.
      \item\label{item:convergence-local}
      If $u_n$ are bounded classical solutions of \autoref{eq:hjb} with data 
      $(f_n, \unT{n})\in \cD_0(\hd,M)$
      and  \mbox{$\lim\limits_{n\to\infty}\|u_n-u\|_{\infty}=0$},
      then $\fL u_n(t) \to\fL u(t)$ uniformly on compact sets in $\X$ for every $t\in\T$.
      \item\label{item:uniform-cont-local}
     $\dt u,\,\fL u \in\Cb$ 
     and for every $t\in\T$ there is a constant $C(t,f,\uT)$ such that 
     $\|\fL u\|_{\mathcal{C}^{\hd/2\sigma,\hd}([0,t]\times\X)}\leq C(t,f,\uT)$.
  \end{enumerate}
 \end{theorem}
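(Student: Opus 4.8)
The three assertions are obtained in order; the assumed interior $(\hd/2\sigma,\hd)$-regularity estimates of \autoref{def:interior-reg} together with the comparison principle are the main tools. For \emph{part (i)}, note that under \ref{L:ndeg-nloc} we have $a=0$, so \autoref{thm:hjb-viscosity} provides a unique bounded viscosity solution $u$ of \autoref{eq:hjb}, while under \ref{L:ndeg-loc} we have $\nu=0$ and the same conclusion follows from classical viscosity theory (comparison together with Perron's method, cf.~\cite{MR1118699}). In either case $\|u(t)\|_\infty\le(T-t)\|f\|_\infty+\|\uT\|_\infty$ by \autoref{cor:hjb-uc}, so the right-hand side of the interior estimate is finite and $[\dt u]_{\HH{\hd/2\sigma}{\hd}{1}}$, $[\fL u]_{\HH{\hd/2\sigma}{\hd}{1}}$ are finite for every base point; hence $\dt u$ and $\fL u$ are (locally H\"older, in particular) continuous on $\T\times\X$. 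A viscosity solution for which $\dt u$ and $\fL u$ are continuous satisfies \autoref{eq:hjb} pointwise, so $u$ is a bounded classical solution.

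For \emph{part (iii)}, the data satisfy $\dt f\in L^\infty$ (since $f\in\HHb{1}{\hd}$) and $\fL\uT\in L^\infty$ (part of $(f,\uT)\in\cD_B(\hd,M)$), so \autoref{lemma:hjb-dt-bound} gives $\dt u\in L^\infty(\T\times\X)$ with a bound depending only on $M$ and $T$. From \autoref{eq:hjb} we get $F(\fL u)=-\dt u-f\in L^\infty$, and since $F'\ge\kappa>0$ by \ref{F:ndeg} the function $F$ is a bijection of $\R$ with $\kappa^{-1}$-Lipschitz inverse, so $\fL u=F^{-1}(-\dt u-f)$ is bounded. Combining this $L^\infty$ bound with the interior seminorm estimate---whose right-hand side is controlled by $\|f\|_{1,\hd}$ and $\|u\|_\infty$, hence by $M$ and $T$ via \autoref{cor:hjb-uc}---gives $\|\fL u\|_{\mathcal{C}^{\hd/2\sigma,\hd}([0,t]\times\X)}\le C(t,f,\uT)$; in particular $\fL u\in\Cb$, and then $\dt u=-F(\fL u)-f\in\Cb$ as well.

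For \emph{part (ii)}, the estimates of part (iii) are uniform in $n$ because the data lie in the fixed class $\cD_B(\hd,M)$: \autoref{lemma:hjb-dt-bound} together with \ref{F:ndeg} bounds $\|\dt u_n\|_\infty+\|\fL u_n\|_\infty$, and the interior estimates bound $\sup_x[\fL u_n]_{\HH{\hd/2\sigma}{\hd}{1}}$, both independently of $n$ (the second using $\sup_n\|u_n\|_\infty<\infty$, which holds since $u_n\to u$ uniformly). Feeding these bounds into linear Schauder-type interior regularity for the \emph{fixed} operator $\fL$---classical parabolic Schauder when $\fL=\tr(aa^TD^2\,\cdot\,)$, and interior regularity for the analytic-semigroup generator in the case of \ref{L:ndeg-nloc}---gives an $n$-independent bound for $u_n$ on compact subsets of $\T\times\X$ in a parabolic H\"older class whose spatial exponent exceeds the order of $\fL$ (larger than $2$ in the local case, larger than $2\sigma$ in the nonlocal case, after shrinking $\hd$ if necessary). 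By the Arzel\`a--Ascoli theorem a subsequence $u_{n_k}$ converges in this higher local norm, necessarily to $u$, so $u$ has the same local regularity and $\fL u$ is defined pointwise; this higher-order convergence passes to the limit through the operator $\fL$---using in addition $u_{n_k}\to u$ uniformly to control the tail ($B_1^c$) part of the L\'evy integral when $\fL$ is nonlocal---so $\fL u_{n_k}(t,x)\to\fL u(t,x)$ for every $(t,x)$, and the uniform-in-$x$ bound on $[\fL u_n]_{\HH{\hd/2\sigma}{\hd}{1}}$ promotes this to uniform convergence on compact sets in $\X$ for each $t\in\T$; since the limit $\fL u$ does not depend on the subsequence, the whole sequence converges. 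The main obstacle is exactly this: the hypothesis supplies regularity only of $\fL u_n$ and $\dt u_n$, so one must first bootstrap, via linear regularity for the fixed operator, to uniform local regularity of the $u_n$ themselves with exponent exceeding the order of $\fL$---which in the nonlocal case is what dominates the singular part of the integral defining $\fL$---and a secondary but genuine point is that $\|\fL u\|_\infty$ relies on the strict monotonicity \ref{F:ndeg} to invert $F$.
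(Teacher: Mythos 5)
Your parts (\textit{i}) and (\textit{iii}) follow essentially the same path as the paper: existence of a viscosity solution (via \autoref{thm:hjb-viscosity} in the nonlocal case, classical theory in the local case) upgraded to a classical solution by the assumed interior estimates, and then $\|\fL u\|_\infty$ obtained by combining \autoref{lemma:hjb-dt-bound} with the inversion $\fL u=F^{-1}(-\dt u-f)$ permitted by \ref{F:ndeg}, after which the interior seminorm bound closes part (\textit{iii}).

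Part (\textit{ii}) is where you genuinely diverge. The paper also starts from the $n$-uniform interior bounds on $\fL u_n(t)$ and applies Arzel\`a--Ascoli to the family $\{\fL u_n(t)\}$ directly, but it identifies the limit $v$ of a convergent subsequence by \emph{duality}: for $\varphi\in C_c^\infty(\X)$ one has $\int\fL u_{n_k}\varphi\,dx=\int u_{n_k}\,\fLs\varphi\,dx\to\int u\,\fLs\varphi\,dx=\int\fL u\,\varphi\,dx$, using only the uniform convergence $u_n\to u$ and the fact (from part (\textit{iii})) that $\fL u$ is already known to be continuous and bounded. This needs no regularity of the $u_n$ themselves beyond boundedness. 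You instead bootstrap, via linear Schauder-type interior estimates for the fixed operator $\fL$, to $n$-uniform local regularity of $u_n$ of order exceeding the order of $\fL$, and then pass $\fL$ through the limit pointwise. That route is workable (the operator is translation invariant, and such Schauder estimates are available in the local case and, with some care near $2\sigma=1$ and at integer exponents, in the \ref{L:ndeg-nloc} class), but it imports a nontrivial piece of linear regularity theory that the paper's adjoint argument avoids entirely; it is the one place where your proof rests on machinery not supplied by the hypotheses or by results established elsewhere in the paper. Both arguments share the final step of upgrading pointwise convergence to local uniform convergence through the equi-H\"older bound and passing from the subsequence to the full sequence by uniqueness of the limit.
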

 \begin{proof}
\begin{part*}\label{part:unique-local}
  There exists a bounded viscosity solution by \autoref{thm:hjb-viscosity} \autoref{item:vs-ex}. 
  Because of the interior regularity estimates,
  we have $\dt u,\,\fL u\in C(\T\times \X)$,
  hence $u$ is a bounded classical solution of \autoref{eq:hjb}.
  \end{part*} 
  \begin{part*}
  By \autoref{part:unique-local} and interior regularity estimates, for every $t\in \T$ and $r>0$,
  there exists a constant $C(t,r)>0$ such that 
  \begin{align*}
   \sup_n \Big(\|\fL u_n(t)\|_{L^\infty(B_r)} + [\fL u_n (t) ]_{\Holder{\hd}(B_r)}\Big)  \leq C(t,r).
  \end{align*}
  By the Arzel\`a--Ascoli theorem,
  for every $t\in \T$ there exist a subsequence $\{u_{n_k}\}$ and a function $v\in C_b(\X)$ such that
  $\fL u_{n_k}(t) \to v$ uniformly on compact sets in $\X$.
  For $\varphi\in C_c^\infty(\X)$, we note that
  \begin{align*}
    \lim_{k\to\infty}\int_\X \fL u_{n_k}(t,x)\varphi(x)\,dx 
    = \int_\X v(x)\varphi(x)\,dx,
  \end{align*}
   and since 
  $\lim_{n\to\infty}\|u_{n}-u\|_\infty = 0$ and $\fLs\varphi\in L^1(\X)$,
  \begin{align*}
  \lim_{k\to\infty}\int_\X \fL u_{n_k}(t,x)\varphi(x)\,dx 
  &=\lim_{k\to\infty}\int_\X u_{n_k}(t,x)\fLs\varphi(x)\,dx\\
  &= \int_\X u(t,x)\fLs\varphi(x)\,dx
  = \int_\X \fL u(t,x) \varphi(x)\,dx.
  \end{align*}
  Hence $v(x)=\fL u(t,x)$, and $\fL u_{n_k}(t) \to \fL u(t)$ uniformly on compact sets in $\X$
  for every $t\in\T$.
  \end{part*}
  \begin{part*}
  By \autoref{part:unique-local} and \autoref{lemma:hjb-dt-bound},
  $\dt u \in C_b (\T\times \X)$.
  Since $u$ is a bounded classical solution and $F'\geq\kappa$,
  we also have $\fL u = F^{-1}(-\dt u-f)\in C_b(\T\times \X)$.
  Moreover,
   $\|\fL u\|_\infty \leq F^{-1}\big(T\|\dt f\|_\infty + \|F(\fL \uT)\|_\infty + 2\|f\|_\infty\big)$.
  
  By \autoref{thm:hjb-viscosity}\autoref{item:cp}, we have $\|u\|_\infty \leq T\|f\|_\infty + \|\uT\|_\infty$.
  Thus, by interior regularity estimates
  (which are uniform in $x$, see \autoref{def:interior-reg}),
  for every $t\in\T$,
  \begin{align*}
    \|\fL u\|_{\mathcal{C}^{\hd/2\sigma,\hd}([0,t]\times\X)} 
    &\leq \|\fL u\|_\infty + \sup_{x\in\X}\Big([\fL u]_{\HH{\hd/2\sigma}{\hd}{1}}\Big)\\
    &\leq \widetilde{C}(t) \Big(\|f\|_{\hd/2\sigma,\hd}+\|\dt f\|_\infty
     +\|\fL \uT\|_\infty +\|\uT\|_\infty\Big).
  \end{align*}
  \vspace{-14pt}
\end{part*}
\end{proof}
\begin{corollary}\label{cor:hjb-l2-s}
   Assume \ref{L:ndeg-loc} or \ref{L:ndeg-nloc}, and \ref{D:ndeg}, \ref{F:ndeg}, \ref{a:F2}.
   If interior $(\frac{\hd}{2\sigma},\hd)$-regularity estimates hold for \autoref{eq:hjb},
  then \ref{R1}, \ref{R2}, \ref{R3}, \ref{U1} are satisfied.
 \end{corollary}
 \begin{proof}
 Condition \ref{R1} follows from \autoref{thm:hjb-interior}\autoref{item:unique-local},
 while \ref{R2} follows from  \autoref{thm:hjb-interior}\autoref{item:convergence-local}, and
 \ref{R3}, \ref{U1} hold by  \autoref{thm:hjb-interior}\autoref{item:uniform-cont-local}.
 \end{proof}
 \begin{remark}
 If instead of \ref{D:ndeg} we only assume $\cD\subset \HHb{\hd/2\sigma}{\hd}\times\BUC$ (uniformly bounded in an appropriate way) in \autoref{cor:hjb-l2-s}, then we still obtain \ref{R1}~and \ref{R2}.
 We may get \ref{R3} by assuming $F'\leq K$
 (i.e.~$F$~is globally Lip\-schitz).
 This is enough for existence in \autoref{thm:mfg-existence},
 but not for uniqueness in \autoref{thm:mfg-uniqueness}.
 \end{remark}
\end{section}
\begin{section}{Fokker--Planck equations}\label{sec:fpe_main}
\subsection{Existence}\label{sec:fpe}
 In this section we prove existence for \autoref{eq:fp}.
 We assume:
 \begin{description*}\vspace{0.33\baselineskip}
  \item[(B)\label{a1':b}]$b\in\Cu$ and $b(t,x)\in[0,B]$
  for fixed $B\in[0,\infty)$ and every $(t,x)\in\T\times\X$.
 \end{description*}\smallskip
 
 For $b=F'(\fL u)$ this is a consequence of~\ref{a:F1} and either~\ref{R3} or \ref{U1}
 when $u$ is a bounded classical solution of \autoref{eq:hjb}.
   \begin{lemma}\label{lemma:defn-fp-sol}
   Let $m\in\CPX$ and $m(0)=m_0$.
   The following are equivalent\smallskip
 \begin{enumerate}
     \item\label{item:D} $m$ is a very weak solution of \autoref{eq:fp} (cf.~\autoref{def:fp-weak});
     \item\label{item:D-large} $m$ satisfies \eqref{eq:fp-weaksolution} for every 
     \begin{align*}\hspace{-.5\mathindent}
      \phi\in\mathcal{U} = \big\{\phi\in\Cbb:\ \dt \phi+ b\fL \phi\in\Cb\big\};
     \end{align*}
     \item\label{item:D-small} $m$ satisfies \eqref{eq:fp-weaksolution} for every\,\footnote{In this set functions are constant in time.} 
     \begin{align*}\hspace{-.5\mathindent}
      \phi\in\big\{\phi\in\D : \phi(t) = \psi\in C_c^\infty(\X) \text{ for every }{t\in\Tb}\big\}.
     \end{align*}
 \end{enumerate}
 \end{lemma}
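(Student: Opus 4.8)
The plan is to note that (ii) $\Rightarrow$ (i) $\Rightarrow$ (iii) hold for free from the inclusions of the test-function classes, and to close the cycle by proving (iii) $\Rightarrow$ (ii). For (ii) $\Rightarrow$ (i) it is enough to check $\D\subset\mathcal{U}$: if $\phi\in\D=C_c^\infty(\Tb\times\X)$ then $\phi\in\Cbb$ and $\dt\phi\in\Cb$ are clear, $\sup_{t}\|\fL\phi(t)\|_\infty\le\|\fL\|_{\text{\textit{LK}}}\sup_t\|\phi(t,\cdot)\|_{C^2_b(\X)}<\infty$ by \autoref{prop:lx}, and $(t,x)\mapsto\fL\phi(t,x)$ is jointly continuous by~\ref{L:levy} and dominated convergence (the integrand defining $\fL\phi$ is bounded by $\tfrac12\|D^2\phi\|_\infty\,(1\wedge|z|^2)+2\|\phi\|_\infty\mathbbm{1}_{B_1^c}(z)$, which is $\nu$-integrable); hence $\dt\phi+b\fL\phi\in\Cb$ since $b\in\Cb$ by~\ref{a1':b}. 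The implication (i) $\Rightarrow$ (iii) is trivial, the test functions in (iii) forming a subset of $\D$.

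For (iii) $\Rightarrow$ (ii) I would argue in two stages. \emph{Stage~1 (spatial upgrade).} Given $\psi\in C^2_b(\X)$, apply (iii) to the constant-in-time functions $\psi^{R,\epsilon}=(\chi_R\psi)*\rho_\epsilon\in C_c^\infty(\X)$, where $\chi_R(x)=\chi(x/R)$ for a fixed $\chi\in C_c^\infty(\X)$ with $\mathbbm{1}_{B_1}\le\chi\le\mathbbm{1}_{B_2}$ and $\rho_\epsilon$ is a mollifier. As $\epsilon\to0$, $\psi^{R,\epsilon}\to\chi_R\psi$ and (since $\fL$ is translation invariant) $\fL\psi^{R,\epsilon}=(\fL(\chi_R\psi))*\rho_\epsilon\to\fL(\chi_R\psi)$ pointwise, with $\|\fL\psi^{R,\epsilon}\|_\infty\le\|\fL\|_{\text{\textit{LK}}}\|\chi_R\psi\|_{C^2_b(\X)}$ uniformly in $\epsilon$; since each $m(\tau)\in\PX$ and $b\in\Cb$, dominated convergence passes these limits through \eqref{eq:fp-weaksolution}. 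Next, as $R\to\infty$, $\chi_R\psi\to\psi$ and, for each fixed $x$ and $R>|x|+1$, $\fL(\chi_R\psi)(x)-\fL\psi(x)=\fL\big((\chi_R-1)\psi\big)(x)=\int_{B_1^c}\big((\chi_R-1)\psi\big)(x+z)\,\nu(dz)$ (all other contributions vanish because $(\chi_R-1)\psi\equiv0$ near $x$), which is bounded by $2\|\psi\|_\infty\,\nu(B_1^c)$ and tends to $0$; together with $\sup_{R\ge1}\|\chi_R\psi\|_{C^2_b(\X)}<\infty$, dominated convergence shows that \eqref{eq:fp-weaksolution} holds for every constant-in-time $\phi(t,x)=\psi(x)$ with $\psi\in C^2_b(\X)$.

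\emph{Stage~2 (time discretization).} Fix $\phi\in\mathcal{U}$, $t\in\Tb$, and the uniform partition $t_j=jt/N$ of $[0,t]$. Telescoping $m(t)[\phi(t)]-m_0[\phi(0)]=\sum_{j}\big(m(t_{j+1})[\phi(t_{j+1})]-m(t_j)[\phi(t_j)]\big)$ and splitting each increment,
\begin{equation*}
 m(t_{j+1})[\phi(t_{j+1})]-m(t_j)[\phi(t_j)] = m(t_{j+1})\Big[\int_{t_j}^{t_{j+1}}\dt\phi(\sigma)\,d\sigma\Big] + \big(m(t_{j+1})[\phi(t_j)]-m(t_j)[\phi(t_j)]\big),
\end{equation*}
where the last bracket equals $\int_{t_j}^{t_{j+1}}m(\tau)[b(\tau)\fL\phi(t_j)]\,d\tau$ by applying the extended identity of Stage~1 with $\psi=\phi(t_j,\cdot)\in C^2_b(\X)$ at $s=t_{j+1}$ and $s=t_j$. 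Summing over $j$ and using Fubini,
\begin{equation*}
 m(t)[\phi(t)]-m_0[\phi(0)] = \int_0^t m(\pi_N(\sigma))[\dt\phi(\sigma)]\,d\sigma + \int_0^t m(\tau)[b(\tau)\fL\phi(\lambda_N(\tau))]\,d\tau,
\end{equation*}
with $\pi_N(\sigma)$, $\lambda_N(\tau)$ the right and left endpoints of the partition interval containing the argument. Since $m\in\CPX$ we have $m(\pi_N(\sigma))\to m(\sigma)$ weakly and, as $\lambda_N(\tau)\to\tau$, $\fL\phi(\lambda_N(\tau),x)\to\fL\phi(\tau,x)$ pointwise; the two integrands are bounded by $\|\dt\phi\|_\infty$ and $\|b\|_\infty\|\fL\phi\|_\infty$ (recall $m(\tau)\in\PX$ and $\dt\phi,\fL\phi\in\Cb$), so dominated convergence gives $m(t)[\phi(t)]-m_0[\phi(0)]=\int_0^t m(\tau)[\dt\phi(\tau)+b(\tau)\fL\phi(\tau)]\,d\tau$, i.e.~\eqref{eq:fp-weaksolution} for all $\phi\in\mathcal{U}$. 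This yields (ii) and closes the cycle.

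\emph{Main obstacle.} The genuinely delicate step is the spatial upgrade in Stage~1: the cutoff $\chi_R$ does not commute with the nonlocal operator $\fL$, so one must show directly that $\fL\big((\chi_R-1)\psi\big)\to0$ pointwise while $\|\chi_R\psi\|_{C^2_b(\X)}$ stays bounded in $R$, which is precisely where the finiteness of the L\'evy tail $\nu(B_1^c)$ and of $\int_{B_1}|z|^2\,\nu(dz)$ (the latter through \autoref{prop:lx}) is used. Every remaining limit is a routine dominated-convergence argument resting only on $m\in\CPX$ --- so that $\tau\mapsto m(\tau)$ is $\|\cdot\|_0$-continuous and each $m(\tau)$ is a probability measure --- and on the L\'evy-operator estimates of \autoref{sec:prelim}.
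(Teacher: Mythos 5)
Your architecture --- (ii)$\Rightarrow$(i)$\Rightarrow$(iii) from the inclusions of test classes, then closing the cycle by a time discretization --- is sound, and your Stage~2 is essentially the paper's own proof of (iii)$\Rightarrow$(i) (piecewise-constant-in-time approximation, Taylor in $t$, dominated convergence). The genuine gap is at the junction of the two stages: in Stage~2 you apply ``the extended identity of Stage~1 with $\psi=\phi(t_j,\cdot)\in C^2_b(\X)$'', but for a general $\phi\in\mathcal{U}$ this containment is false. Membership in $\mathcal{U}$ only requires $\phi\in\Cbb$ and $\dt\phi+b\fL\phi\in\Cb$; it says nothing about second spatial derivatives. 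The functions for which part\,(\textit{ii}) is actually needed later --- in \autoref{thm:mfg-uniqueness} one tests against $u=u_1-u_2$ with $u_i$ bounded classical solutions of \autoref{eq:hjb} --- are typically only $\Holder{\hd}$ in space with $\hd\le1$: $\fL u$ exists, is bounded and continuous, but $D^2u$ does not exist. So Stage~1, which upgrades the spatial test class only to $C^2_b(\X)$, stops short of the class Stage~2 consumes, and as written you have proved \eqref{eq:fp-weaksolution} only for those $\phi$ whose time slices are $C^2_b$ --- not for all of $\mathcal{U}$. A secondary issue of the same kind: your Stage~2 estimates use $\|\dt\phi\|_\infty$ and $\|\fL\phi\|_\infty$ separately (and the fundamental theorem of calculus in $t$), whereas the literal definition of $\mathcal{U}$ only controls the sum $\dt\phi+b\,\fL\phi$; the separate bounds must either be argued or built into the reading of $\mathcal{U}$.

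The gap is repairable with one more approximation layer that your own Stage~1 machinery supports: given $\psi\in C_b(\X)$ with $\fL\psi\in C_b(\X)$ bounded, apply the $C^2_b$ case already established to $\psi*\rho_\epsilon\in C_b^\infty(\X)$ and pass $\epsilon\to0$, using that translation invariance gives $\fL(\psi*\rho_\epsilon)=(\fL\psi)*\rho_\epsilon$, which converges to $\fL\psi$ pointwise with the uniform bound $\|\fL\psi\|_\infty$; dominated convergence against $m(\tau)(dx)\,d\tau$ then extends the constant-in-time identity to this larger class, which does contain the slices $\phi(t_j,\cdot)$ for the $\phi$ arising in the applications. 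For comparison, the paper takes a different split: it proves (iii)$\Rightarrow$(i) by your Stage~2 applied to $\varphi\in\D$ --- whose slices are automatically in $C_c^\infty(\X)$, so condition\,(\textit{iii}) applies to each slice directly and no spatial upgrade is needed there --- and then compresses the entire passage from $\D$ to $\mathcal{U}$ into an unelaborated ``density argument'' for (i)$\Rightarrow$(ii). Your Stage~1 is a genuine (and welcome) attempt to make that density argument explicit, but to earn the lemma it must reach $\{\psi\in C_b(\X):\fL\psi\in C_b(\X)\}$, not just $C^2_b(\X)$.
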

 \begin{proof}
 Implications 
 \!\autoref{item:D-large}$\,\Rightarrow$\autoref{item:D}$\,\Rightarrow$\autoref{item:D-small}
 are trivial.
 By a density argument we get \!\autoref{item:D}$\,\Rightarrow$\autoref{item:D-large}.
 To prove \!\autoref{item:D-small}$\,\Rightarrow$\autoref{item:D},
 fix $\varphi\in\D$, $t\in\Tb$, and consider a sequence of simple functions
 $\varphi^k 
 = \sum_{{n}=1}^{N_k}\mathbbm{1}_{[t_{n}^k,t_{n+1}^k)}\varphi(t_{n}^k)\stackrel{k}{\to}\varphi$  pointwise, 
 where $\bigcup_{n}[t_{n}^k,t_{n+1}^k)=[0,t)$ for each $k\in\N$ and $t_{n}^k <t_{n+1}^k$.
 Then by \!\autoref{item:D-small} we have
 \begin{align*}
     \sum_{{n}=1}^{N_k} \big(m(t_{n+1}^k) - m(t_{n}^k)\big)[\varphi(t_{n}^k)] 
     = \sum_{{n}=1}^{N_k}\int_{t_n^k}^{t_{n+1}^k}m(\tau)\big[b(\tau)\fL\varphi(t_{n}^k)\big]\,d\tau.
 \end{align*}
 Notice that by the Lebesgue dominated convergence theorem we get
 \begin{multline*}
     \lim_{k\to\infty}
     \sum_{{n}=1}^{N_k}\int_{t_n^k}^{t_{n+1}^k}m(\tau)\big[b(\tau)\fL\varphi(t_{n}^k)\big]\,d\tau\\
     = \lim_{k\to\infty}\int_0^t m(\tau)\big[b(\tau)\fL\varphi^k(\tau)\big]\,d\tau 
     = \int_0^t m(\tau)\big[b(\tau)\fL\varphi(\tau)\big]\,d\tau.
 \end{multline*}
 We also observe that
 \begin{multline*}
     \sum_{n=1}^{N_k} \big(m(t_{n+1}^k) - m(t_{n}^k)\big)[\varphi(t_{n}^k)] \\
     = m(t)[\varphi(t)] - m_0[\varphi(0)] 
     - \sum_{{n}=1}^{N_k} \Big(m(t_{n+1}^k)[\varphi(t_{n+1}^k) - \varphi(t_{n}^k)]\Big).
 \end{multline*}
 By the Taylor expansion, for some $\xi_{n}^k\in[t_{n}^k,t_{n+1}^k]$ we have
 \begin{align*}
    \varphi(t_{n+1}^k) - \varphi(t_{n}^k) 
    = \dt \varphi(t_{n+1}^k)(t_{n+1}^k-t_{n}^k) - \dt^2\varphi(\xi_{n}^k)\frac{(t_{n+1}^k-t_{n}^k)^2}{2}.
 \end{align*}
 Since $m\in\CPX$, by considering the relevant Riemann integral on $[0,t]$, we get
 \begin{align*}
     \lim_{k\to\infty}\sum_{{n}=1}^{N_k} \Big(m(t_{n+1}^k)[\varphi(t_{n+1}^k) - \varphi(t_{n}^k)]\Big) 
     = \int_0^t m(\tau)[\dt \varphi(\tau)]\,d\tau.
 \end{align*}
 By combining these arguments we obtain
  \begin{align*}
   m(t)[\varphi(t)] 
   = m_0[\varphi(0)] + \int_0^t m(\tau)\big[\dt\varphi(\tau) 
   + b(\tau) \big(\fL \varphi(\tau)\big)\big] d\tau.
  \end{align*}
 \end{proof}
 \begin{lemma}\label{lemma:fp-tightness}
  Assume triplets $(\fL_\lambda,b_{\lambda},m_{0,\lambda})_\lambda$
  satisfy~\ref{L:levy}, \ref{a1':b},~\ref{a:m} for each~$\lambda$,
  and let $\FPm_\lambda$ be the sets of very weak solutions of problems
  \begin{align*}
  \left\{
  \begin{aligned}
   &\dt m_\lambda = \fLs_\lambda(b_\lambda m_\lambda)\quad&&\text{on \ $\T\times\X$},\\
   &m_\lambda(0)=m_{0,\lambda}\quad&&\text{on \   $\X$}.
  \end{aligned}
  \right.
  \end{align*}
  If $\bigcup_\lambda\big\{m_{0,\lambda},(\nu_\lambda)|_{B_1^c}\big\}$ is tight and
  $\sup_\lambda\big(\|b_\lambda\|_\infty  +  \|\fL_\lambda\|_{\text{\textit{LK}}}\big)
  <\infty$,\footnote{See~\eqref{eq:lk} for the definition of $\|\cdot\|_{\text{\textit{LK}}}$.}
  then\smallskip
  \begin{enumerate}
   \item\label{item:tightness} for every $\epsilon>0$
   there exists a compact set $K_\epsilon\subset\X$ such that
  \begin{align*}\hspace{-.5\mathindent}
   \sup \Big\{\sup_{t\in\Tb}m(t)(K_\epsilon^c):
   m\in\textstyle\bigcup_{\lambda}\FPm_\lambda\Big\}\leq\epsilon;
  \end{align*}
   \item\label{item:equicontinuity} for every $m\in\bigcup_{\lambda}\FPm_\lambda$ we have 
  \begin{align*}\hspace{-.5\mathindent}
   \|m(t)-m(s)\|_0 \leq \sup_\lambda 
   \Big(2 + \big(2\sqrt{T}+K_d\big)\|b_\lambda\|_\infty\|\fL_\lambda\|_{\text{\textit{LK}}}\Big) \sqrt{|t-s|};
   \end{align*}
   \item\label{item:compactness} the set $\bigcup_{\lambda}\FPm_\lambda\subset\CPX$ is pre-compact.
  \end{enumerate}
 \end{lemma}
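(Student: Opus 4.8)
The plan is to read off all three parts from a single a~priori moment bound (through a common Lyapunov function), a mollification argument against the weak formulation \eqref{eq:fp-weaksolution}, and then the Arzel\`a--Ascoli theorem; the uniformity over $\lambda$ throughout is exactly what the two hypotheses $\sup_\lambda\big(\|b_\lambda\|_\infty+\|\fL_\lambda\|_{\text{\textit{LK}}}\big)<\infty$ and the joint tightness of $\bigcup_\lambda\{m_{0,\lambda},\mathbbm{1}_{B_1^c}\nu_\lambda\}$ are there to provide.

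\emph{Part (i).} Since $\bigcup_\lambda\{m_{0,\lambda},\mathbbm{1}_{B_1^c}\nu_\lambda\}$ is tight and the masses $\nu_\lambda(B_1^c)\le\frac12\|\fL_\lambda\|_{\text{\textit{LK}}}$ are uniformly bounded, \autoref{prop:lyapunov} (applied after normalising the finite measures $\mathbbm{1}_{B_1^c}\nu_\lambda$) produces one Lyapunov function $V$ with $\sup_\lambda m_{0,\lambda}[V]\le1$ and $\sup_\lambda\int_{B_1^c}V(z)\,\nu_\lambda(dz)<\infty$. As $V$ is unbounded it is not an admissible test function, so I would smoothly truncate it at level $n$ to obtain $V_n\in C^2_b(\X)$ with $V_n\uparrow V$, $|\nabla V_n|\le|\nabla V|$, and $\|D^2V_n\|_\infty$ bounded uniformly in $n$ (spreading the transition over a unit interval, as in the construction inside the proof of \autoref{prop:lyapunov}); repeating the estimates of the proof of \autoref{lemma:V-equiv} then gives $\sup_{n,\lambda}\|\fL_\lambda V_n\|_\infty\le C_V<\infty$, using $\sup_\lambda\|\fL_\lambda\|_{\text{\textit{LK}}}<\infty$, subadditivity of $V_0$, and the uniform tail bound. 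Because $V_n$ is time-independent and lies in the class $\mathcal U$ of \autoref{lemma:defn-fp-sol}, I may insert $\phi=V_n$ into \eqref{eq:fp-weaksolution}, so that for every $m\in\bigcup_\lambda\FPm_\lambda$ and $t\in\Tb$,
\[
m(t)[V_n]=m_{0,\lambda}[V_n]+\int_0^t m(\tau)\big[b_\lambda(\tau)\,\fL_\lambda V_n\big]\,d\tau\le 1+T\Big(\sup_\lambda\|b_\lambda\|_\infty\Big)C_V=:r .
\]
Letting $n\to\infty$ by monotone convergence yields $m(t)[V]\le r$ uniformly in $m$ and $t$, and \autoref{prop:pre-compactness} then gives, for each $\epsilon>0$, a compact $K_\epsilon$ with $m(t)(K_\epsilon^c)\le\epsilon$ for all such $m$ and $t$, which is part~(i).

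\emph{Part (ii).} Fix $s<t$ in $\Tb$ and $\psi$ with $\|\psi\|_\infty\le1$, $[\psi]_1\le1$, and set $\psi_\delta=\psi*\rho_\delta$ for a suitable fixed smooth mollifier, so that $\psi_\delta\in C^2_b(\X)$ with $\|\psi_\delta\|_\infty\le1$, $\|\nabla\psi_\delta\|_\infty\le1$, $\|D^2\psi_\delta\|_\infty\le K_d\,\delta^{-1}$, and $\|\psi-\psi_\delta\|_\infty\le\delta$. Since $\psi_\delta$ is time-independent and in $\mathcal U$, \autoref{lemma:defn-fp-sol} allows it as a test function, giving $(m(t)-m(s))[\psi_\delta]=\int_s^t m(\tau)\big[b_\lambda(\tau)\fL_\lambda\psi_\delta\big]\,d\tau$, hence, by \autoref{prop:lx},
\[
\big|(m(t)-m(s))[\psi]\big|\le 2\delta+(t-s)\,\|b_\lambda\|_\infty\,\|\fL_\lambda\|_{\text{\textit{LK}}}\big(2+K_d\,\delta^{-1}\big).
\]
Choosing $\delta=\sqrt{t-s}\le\sqrt T$ and using $2(t-s)\le2\sqrt T\,\sqrt{t-s}$ bounds the right-hand side by $\big(2+(2\sqrt T+K_d)\|b_\lambda\|_\infty\|\fL_\lambda\|_{\text{\textit{LK}}}\big)\sqrt{t-s}$; taking the supremum over admissible $\psi$ (see \autoref{def:rubinstein}) and then over $\lambda$ yields part~(ii).

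\emph{Part (iii).} This follows from the Arzel\`a--Ascoli theorem for maps from the compact interval $\Tb$ into the complete space $\PX$ (completeness as noted after \autoref{def:rubinstein}): part~(i) together with the Prokhorov theorem gives relative compactness of $\{m(t):m\in\bigcup_\lambda\FPm_\lambda\}$ in $\PX$ for each $t$, while part~(ii) provides the uniform modulus of continuity $\bstr(h)=C\sqrt h\to0$; hence $\bigcup_\lambda\FPm_\lambda$ is pre-compact in $\CPX$. I expect the only genuinely delicate point to be the rate in part~(ii): the balance between the mollification error ($\sim\delta$) and the growth $\|\fL_\lambda\psi_\delta\|_\infty\sim\delta^{-1}$ is what forces the $\sqrt{|t-s|}$ modulus and must be matched carefully against the stated constant, and keeping $\sup_{n,\lambda}\|\fL_\lambda V_n\|_\infty$ finite after truncation in part~(i) requires the same kind of (routine) bookkeeping.
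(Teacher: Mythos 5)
Your proposal is correct and follows essentially the same route as the paper's proof: a common Lyapunov function from \autoref{prop:lyapunov} (via \autoref{lemma:V-equiv} and the uniform \textit{LK}-bound), truncated to an admissible test function $V_n$ and inserted into the weak formulation to get the uniform moment bound for part~(\textit{i}); mollification of the Rubinstein--Kantorovich test function with the balance $\delta=\sqrt{|t-s|}$ for part~(\textit{ii}); and Arzel\`a--Ascoli for part~(\textit{iii}). The only (immaterial) difference is that you pass to the limit $n\to\infty$ by monotone convergence on the left-hand side alone, whereas the paper passes to the limit in the full identity by dominated convergence; both hinge on the same uniform bound $\sup_{n,\lambda}\|\fL_\lambda V_n\|_\infty<\infty$ that you correctly identify as the bookkeeping step.
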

 \begin{proof}
 \begin{part*}\label{part:compactness}
  Let $V(x) = V_0\big(\sqrt{1+|x|^2}\big)$ be a Lyapunov function for which we have
  \mbox{$\sup_\lambda\big(m_{0,\lambda}[V]+\|\fL_\lambda V\|_\infty\big)<\infty$}
  (see \autoref{prop:lyapunov}, \autoref{lemma:V-equiv}, \autoref{rem:levy-lyapunov}).
  For $n\in\N$, let $V_{n,0}\in C^2_b\big([0,\infty)\big)$ be such that
  \begin{align*}
      V_{n,0}(t)=\left\{\begin{aligned} &V_0(t) &&\text{for $t\leq n$},\\
       &V_0\big(\sqrt{1+(n+1)^2}\big)\quad &&\text{for $t\geq n+2$},\end{aligned}\right.
  \end{align*}
  and additionally 
  \begin{align}\label{eq:Vn-est}
  0\leq V_{n,0}'\leq V_0'\qquad\text{and}\qquad |V_{n,0}''|\leq |V_0''|.
  \end{align}
  Take $V_n(x) = V_{n,0}\big(\sqrt{1+|x|^2}\big)$.
  Thanks to \autoref{lemma:defn-fp-sol}, for every $m\in\FPm_\lambda$,
  \begin{align}\label{eq:very_weak_soln-Vn}
   m(t)[V_n] = m_{0,\lambda}[V_n]+\int_0^tm(\tau)[b_\lambda(\tau)\fL_\lambda V_n]\,d\tau.
  \end{align}
  Notice that $|V_n(x)-V_n(y)|\leq |V(x)-V(y)|$ and
  \begin{align}\label{eq:Vn-limit}
      \lim_{n\to\infty}\big(V_n,\nabla V_n, D^2 V_n\big)(x)
      = \big(V,\nabla V, D^2 V\big)(x)\quad\text{for every $x\in\X$.}
  \end{align} 
  We now use the formula in~\ref{L:levy} with $\phi=V_n$
  and separate the integral part on domains $B_1$ and $B_1^c$.
  Because of \eqref{eq:Vn-limit}, by the Lebesgue dominated convergence theorem --- 
  we use \autoref{lemma:V-equiv}\autoref{item:th2} for the integral on $B_1^c$
  and \eqref{eq:Vn-est} otherwise ---
  we may pass to the limit in \eqref{eq:very_weak_soln-Vn}.
  For every $t\in\Tb$, $\lambda$, and $m\in\FPm_\lambda$ we obtain
  \begin{align}\label{eq:m-bound}
   m(t)[V] = m_{0,\lambda}[V]+\int_0^tm(\tau)[b_\lambda\fL_\lambda V]\,d\tau
   \leq m_{0,\lambda}[V] + \|b_\lambda\|_\infty \|\fL_\lambda V\|_\infty T.
  \end{align}
  Thus, by \autoref{prop:pre-compactness},
  for every $\epsilon>0$ there exists a~compact set $K_\epsilon$ such that
  \begin{align*}
   \sup \Big\{m(t)(K_\epsilon^c):
   t\in\Tb,\ m\in\textstyle\bigcup_{\lambda}\FPm_\lambda\Big\}\leq\epsilon.
  \end{align*}
\vspace{-\baselineskip}
\end{part*}
\begin{part*}\label{part:equicontinuity}
  Consider $\phi_{\epsilon} = \phi * \rho_{\epsilon}$,
  where $\phi\in\Hb{1}$ is such that $\|\phi\|_\infty\leq 1$ and $[\phi]_1\leq1$,
  and $\rho_{\epsilon}$ is a standard mollifier.
  Then $\|\phi - \phi_{\epsilon}\|_{\infty} \leq \epsilon$ and, by \autoref{prop:lx},
  $\|\fL \phi_{\epsilon}\|_\infty\leq \|\fL\|_{\text{\textit{LK}}}\|\phi_\epsilon\|_{C^2_b(\X)}$.
  By \autoref{def:fp-weak}, for every $\lambda$ and $m\in\FPm_\lambda$,
  \begin{align*}
   \big|\big(m(t)-m(s)\big)[\phi] \big| 
   &=  \big|\big(m(t)-m(s)\big)[\phi-\phi_{\epsilon}]
   + \big(m(t)-m(s)\big)[\phi_{\epsilon}] \big|  \\
   &\leq 2\epsilon + \bigg|\int_s^t\int_\X (\fL_\lambda \phi_\epsilon)(x) 
   b_\lambda(\tau,x)\,m(\tau,dx)\,d\tau\bigg|\\
   &\leq 2\epsilon + \|b_\lambda\|_\infty\|\fL_\lambda\|_{\text{\textit{LK}}} \|\phi_\epsilon\|_{C^2_b(\X)} |t-s|.
  \end{align*}
  We also have
  \begin{align*}
  \|\phi_\epsilon\|_{C^2_b(\X)} \leq \bigg(\|\phi\|_\infty+\|\nabla\phi\|_\infty
  +\frac{K_d\|\nabla\phi\|_{\infty}}{\epsilon}\bigg)\leq  \frac{2\epsilon+K_d}{\epsilon}.
  \end{align*}
  By taking $\epsilon = \sqrt{|t-s|}$, we thus obtain
  \begin{align*}
   \|m(t)-m(s)\|_0 
   \leq \sup_\lambda\Big(2 + \big(2\sqrt{T}+K_d\big)\|b_\lambda\|_\infty\|\fL_\lambda\|_{\text{\textit{LK}}}\Big) \sqrt{|t-s|}.
  \end{align*}
  \vspace{-\baselineskip}
  \end{part*}
  \begin{part*}
  It follows from \autoref{part:compactness} that the set 
  $\big\{m(t):m\in\bigcup_\lambda \FPm_\lambda\big\}$
  is pre-compact for a~fixed $t\in\Tb$.
  Then, in \autoref{part:equicontinuity},
  we showed that the family $\bigcup_\lambda\FPm_\lambda$ is equicontinuous in $\CPX$.
  Hence $\bigcup_{\lambda}\FPm_\lambda\subset\CPX$
  is pre-compact by the Arzel\`a--Ascoli theorem~\cite[\S7~Theorem~17]{MR0370454}.
  \vspace{-14pt}\[\ \]\vspace{-\belowdisplayskip}
  \end{part*}
 \end{proof}
 
 In the general case we are unable to prove uniqueness of solutions of \autoref{eq:fp}.
 However, we can make the following observation about the sets of solutions.
 \begin{corollary}\label{cor:m-compact}
  Assume~\ref{L:levy},~\ref{a1':b},~\ref{a:m}.
  If $\FPm\subset\CPX$ is the set of solutions of \autoref{eq:fp}
  corresponding to $(b,m_0)$, then $\FPm$ is convex, compact, and 
  \begin{align*}
      \sup_{m\in\FPm\vphantom{\Tb}}\sup_{t\in\Tb}m(t)[V]
      \leq c_1,
      \qquad\sup_{m\in\FPm\vphantom{T}}\sup_{0<|t-s|\leq T}\frac{\|m(t)-m(s)\|_0}{\sqrt{|t-s|}}
      \leq c_2,
   \end{align*}
  for a Lyapunov function $V$ such that $m_0[V],\|\fL V\|_\infty<\infty$
  (see \autoref{rem:levy-lyapunov}), and 
  \begin{align*}c_1= m_{0}[V] + T\|b\|_\infty \|\fL V\|_\infty,\qquad c_2
  = 2 + \big(2\sqrt{T}+K_d\big)\|b\|_\infty\|\fL\|_{\text{\textit{LK}}}.\end{align*}
 \end{corollary}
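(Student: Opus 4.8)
The plan is to derive \autoref{cor:m-compact} almost entirely from \autoref{lemma:fp-tightness}, applied to the ``degenerate'' one-element family consisting of the single triplet $(\fL,b,m_0)$. First I would check the two hypotheses of \autoref{lemma:fp-tightness} in this setting. Since $\nu$ is a L\'evy measure (by \ref{L:levy}), $\mathbbm{1}_{B_1^c}\nu$ is a finite measure, and $m_0$ is a probability measure, so the pair $\{m_0,\mathbbm{1}_{B_1^c}\nu\}$ is a tight (finite) family; and $\|b\|_\infty<\infty$ by \ref{a1':b}, while $\|\fL\|_{\text{\textit{LK}}}<\infty$ by \ref{L:levy} and the definition \eqref{eq:lk}. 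A Lyapunov function $V$ with $m_0[V],\|\fL V\|_\infty<\infty$ is furnished by \autoref{rem:levy-lyapunov} (which itself rests on \autoref{prop:lyapunov} and \autoref{lemma:V-equiv}).

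Granting this, the two displayed estimates follow at once. Specialising inequality \eqref{eq:m-bound} from the first part of the proof of \autoref{lemma:fp-tightness} to the present single triplet gives $m(t)[V]\leq m_0[V]+T\|b\|_\infty\|\fL V\|_\infty=c_1$ for every $m\in\FPm$ and $t\in\Tb$; \autoref{lemma:fp-tightness}\autoref{item:equicontinuity} gives $\|m(t)-m(s)\|_0\leq c_2\sqrt{|t-s|}$ with $c_2=2+(2\sqrt{T}+K_d)\|b\|_\infty\|\fL\|_{\text{\textit{LK}}}$; and \autoref{lemma:fp-tightness}\autoref{item:compactness} shows that $\FPm$ is pre-compact in $\CPX$.

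It then remains to promote pre-compactness to compactness and to verify convexity. For compactness I would show that $\FPm$ is closed in $\CPX$: given $m_n\in\FPm$ with $m_n\to m$ in $\CPX$, I pass to the limit in the very weak formulation \eqref{eq:fp-weaksolution} for an arbitrary fixed $\phi\in\D$. Since $\phi$ is smooth and compactly supported, $\sup_{\tau\in\Tb}\|\phi(\tau)\|_{C^2_b(\X)}<\infty$, so by \autoref{prop:lx} the functions $\fL\phi(\tau)$ are bounded uniformly in $\tau$, and for each fixed $\tau$ the map $x\mapsto\dt\phi(\tau,x)+b(\tau,x)(\fL\phi)(\tau,x)$ lies in $C_b(\X)$. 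Because $\|\cdot\|_0$ metrises weak convergence on $\PX$ (see \autoref{subsec:meas}), convergence in $\CPX$ yields $m_n(\tau)\to m(\tau)$ weakly for each $\tau$; hence the scalar integrand $m_n(\tau)[\dt\phi(\tau)+b(\tau)\fL\phi(\tau)]$ converges pointwise in $\tau$ and is bounded uniformly in $n$ and $\tau$, so dominated convergence carries the limit through the time integral, while the endpoint terms satisfy $m_n(t)[\phi(t)]\to m(t)[\phi(t)]$ and $m_n(0)[\phi(0)]=m_0[\phi(0)]$. Thus $m$ satisfies \eqref{eq:fp-weaksolution} with $m(0)=m_0$, i.e.~$m\in\FPm$, and a pre-compact closed set is compact. (Equivalently, by \autoref{lemma:defn-fp-sol} one may test only against time-independent $\psi\in C_c^\infty(\X)$.) Convexity is routine: \eqref{eq:fp-weaksolution} is affine in $m$ and $\PX$ is convex, so for $m_1,m_2\in\FPm$ and $\theta\in[0,1]$ the curve $\theta m_1+(1-\theta)m_2$ belongs to $\CPX$, has initial value $m_0$, and satisfies \eqref{eq:fp-weaksolution}.

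I do not expect any genuine obstacle: the whole argument is bookkeeping on top of \autoref{lemma:fp-tightness}. The only point that merits attention is the limit passage in the closedness step, and that is straightforward once one observes that $\fL\phi$ is bounded uniformly in time for $\phi\in\D$ (by \autoref{prop:lx}) and that convergence in $\CPX$ forces weak convergence of each time-slice.
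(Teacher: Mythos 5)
Your proposal is correct and follows essentially the same route as the paper: the bounds and pre-compactness come from \autoref{lemma:fp-tightness} (specialised to the single triplet $(\fL,b,m_0)$) together with \eqref{eq:m-bound}, and compactness is obtained by adding closedness of $\FPm$ under limits in $\CPX$, with convexity from linearity of the very weak formulation. You merely spell out the limit passage and the verification of the lemma's hypotheses in more detail than the paper does.
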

 \begin{proof}
  It follows from \autoref{def:fp-weak} that $\FPm$ is convex (the equation is linear),
  as well as that if $\{m_n\}\subset\FPm$ and $m_n\to \widehat m$ in $\CPX$,
  then $\widehat m\in\FPm$, i.e.~the set $\FPm$ is closed.
  Hence, by \autoref{lemma:fp-tightness}\autoref{item:compactness},
  we obtain that $\FPm\subset\CPX$ is compact.
  The specified bounds follow from \autoref{lemma:fp-tightness}\autoref{item:equicontinuity}
  and \eqref{eq:m-bound}.
 \end{proof}
 
 We now prove a kind of a stability result for solutions (in terms of 
 semicontinuity with respect to upper Kuratowski limits (see~\cite[\S29.III]{MR0217751}).
 \begin{lemma}\label{lemma:fp-continuity}
  Assume~\ref{L:levy},~\ref{a:m},
  and $\{b_n,b\}_{n\in\N}$ satisfy~\ref{a1':b} with a uniform bound by $B$.
  Let $\{\FPm_n,\FPm\}$ be the corresponding sets of solutions of \autoref{eq:fp}
  with $m_0$ as initial conditions.
  If $m_n\in\FPm_n$ for every $n\in\N$
  and $b_n(t)\to b(t)$ uniformly on compact sets in $\X$ for every $t\in\T$,
  then there exists a subsequence $\{m_{n_k}\}$ and $m\in\FPm$
  such that $m_{n_k}\to m$ in $\CPX$.
 \end{lemma}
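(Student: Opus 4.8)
The plan is to combine the compactness result of \autoref{lemma:fp-tightness} with a passage to the limit in the weak formulation \eqref{eq:fp-weaksolution}.

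\emph{Extracting a convergent subsequence.} First I would apply \autoref{lemma:fp-tightness} with index $\lambda=n$, all operators equal to $\fL$, coefficients $b_\lambda=b_n$, and initial data $m_{0,\lambda}=m_0$. Its hypotheses hold: the set $\{m_0,\mathbbm{1}_{B_1^c}\nu\}$ is a finite collection of finite measures and hence tight, and $\sup_n\big(\|b_n\|_\infty+\|\fL\|_{\text{\textit{LK}}}\big)\leq B+\|\fL\|_{\text{\textit{LK}}}<\infty$. By \autoref{lemma:fp-tightness}\autoref{item:compactness} the set $\bigcup_{n\in\N}\FPm_n$ is pre-compact in $\CPX$; since $\CPX$ is complete, the sequence $\{m_n\}$ has a subsequence $\{m_{n_k}\}$ converging in $\CPX$ to some $m\in\CPX$. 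As $m_{n_k}(0)=m_0$ for every $k$, we get $m(0)=m_0$. It remains to show that $m$ is a very weak solution of \eqref{eq:fp} with coefficient $b$, i.e.~$m\in\FPm$.

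\emph{Passing to the limit.} Fix $\phi\in\D$ and $t\in\Tb$; each $m_{n_k}$ satisfies \eqref{eq:fp-weaksolution} with $b_{n_k}$. The left-hand side $m_{n_k}(t)[\phi(t)]$ converges to $m(t)[\phi(t)]$ and the term $m_0[\phi(0)]$ is unchanged, because $\phi(t)\in C_c^\infty(\X)\subset C_b(\X)$ and $\|\cdot\|_0$ metrises weak convergence on $\PX$. For the integral it suffices, by dominated convergence in $\tau$ (the integrands are bounded uniformly in $k$ and $\tau$ by $\|\dt\phi\|_\infty+B\|\fL\phi\|_\infty$, with $\|\fL\phi\|_\infty<\infty$ by \autoref{prop:lx}), to prove $m_{n_k}(\tau)\big[\dt\phi(\tau)+b_{n_k}(\tau)\fL\phi(\tau)\big]\to m(\tau)\big[\dt\phi(\tau)+b(\tau)\fL\phi(\tau)\big]$ for each $\tau\in[0,t]$. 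Write $b_{n_k}(\tau)\fL\phi(\tau)=b(\tau)\fL\phi(\tau)+\big(b_{n_k}(\tau)-b(\tau)\big)\fL\phi(\tau)$. Since $\dt\phi(\tau),\,b(\tau)\fL\phi(\tau)\in C_b(\X)$ and $m_{n_k}(\tau)\to m(\tau)$ weakly, the corresponding terms converge. For the remainder, $\big|\big(b_{n_k}(\tau)-b(\tau)\big)\fL\phi(\tau)\big|\leq 2B\|\fL\phi\|_\infty$ while $\big(b_{n_k}(\tau)-b(\tau)\big)\fL\phi(\tau)\to 0$ uniformly on compact subsets of $\X$; splitting the integral over $\X$ at the compact set $K_\epsilon$ from \autoref{lemma:fp-tightness}\autoref{item:tightness}, on whose complement $\sup_k m_{n_k}(\tau)(K_\epsilon^c)\leq\epsilon$, we obtain $\limsup_k\big|m_{n_k}(\tau)\big[\big(b_{n_k}(\tau)-b(\tau)\big)\fL\phi(\tau)\big]\big|\leq 2B\|\fL\phi\|_\infty\,\epsilon$ for every $\epsilon>0$, hence this limit is $0$. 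Therefore $m$ satisfies \eqref{eq:fp-weaksolution} for all $\phi\in\D$ and $t\in\Tb$, so $m\in\FPm$.

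\emph{Main obstacle.} The only genuinely delicate point is the last term: $b_{n_k}\to b$ only locally uniformly, not uniformly on $\X$, and --- because $\fL$ is nonlocal --- $\fL\phi$ is not compactly supported, so the convergence cannot be concluded directly. It is precisely here that the uniform-in-$\tau$ tightness from \autoref{lemma:fp-tightness}\autoref{item:tightness} is essential, in order to control the contribution of the tails; everything else is a routine limit passage.
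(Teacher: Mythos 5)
Your proposal is correct and follows essentially the same route as the paper: extract a convergent subsequence via \autoref{lemma:fp-tightness}\autoref{item:compactness}, then pass to the limit in the weak formulation by splitting $b_{n_k}m_{n_k}-bm=(b_{n_k}-b)m_{n_k}+b(m_{n_k}-m)$, using weak convergence for the second piece and the uniform-in-time tightness of \autoref{lemma:fp-tightness}\autoref{item:tightness} together with local uniform convergence of $b_{n_k}$ for the first. The only cosmetic difference is that the paper first reduces to time-independent test functions via \autoref{lemma:defn-fp-sol}\autoref{item:D-small}, whereas you work directly with $\phi\in\D$; both are valid.
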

 \begin{proof}
  By \autoref{lemma:fp-tightness}\autoref{item:compactness}
  the set $\bigcup_n\FPm_n\subset\CPX$ is pre-compact,
  and by \autoref{lemma:fp-tightness}\autoref{item:tightness}
  for every $\epsilon>0$ there exists a compact set $K_\epsilon\subset\X$ such that 
  \begin{align*}
   \sup_{n\in\N\vphantom{\Tb}}\sup_{m\in\FPm_n\vphantom{\Tb}}\sup_{t\in\Tb}
   m(t)(K_\epsilon^c)\leq \epsilon.
  \end{align*}
  Let $\{m_{n_k}\}\subset\{m_n\}$ be a convergent subsequence and $m=\lim_{k\to\infty}m_{n_k}$.
  Without loss of generality, we may still denote $m_{n_k}$ as $m_n$.
  For every $\varphi\in C_c^\infty(\X)$ we have
  \begin{align*}
      \bigg|\int_0^t (b_nm_n-b m)(\tau)[\fL\varphi]\,d\tau\bigg|
    = \bigg|\int_0^t\Big((b_n-b)m_n + b(m_n- m)\Big)(\tau)[\fL\varphi]\,d\tau\bigg|.
  \end{align*}
  Since $m_n\to m$ in $\CPX$ and $b\in\Cb$, we notice that
  \begin{align*}
   \limp_{n\to\infty\vphantom{\T}}\sup_{\tau\in\T}\big|m_n(\tau)[b(\tau)]-{m}(\tau)[b(\tau)]\big| = 0.
  \end{align*}
  Next,
  \begin{multline*}
   \bigg|\int_0^t (b_n-b)m_n(\tau)[\fL\varphi]\,d\tau \bigg|
    \leq \|\fL\varphi\|_\infty
     \int_0^T
     \int_{K_\epsilon \cup K_\epsilon^c}|b_n-b|(\tau,x)\,m_n(\tau,dx)\,d\tau \\
     \leq \|\fL\varphi\|_\infty\bigg(\epsilon T\big(\|b_n\|_{\infty}+\|b\|_\infty\big)+
     \int_0^T\sup_{x\in K_\epsilon}\big|b_n(\tau,x)-b(\tau,x)\big|\,d\tau\bigg).
  \end{multline*}
  We have $\big|b_n(t,x)-b(t,x)\big|\leq 2B$ for every $(t,x)\in\T\times\X$
  and $b_n(t)\to b(t)$ uniformly on compact sets in $\X$ for every $t\in\T$, hence
   $\sup_{x\in K_\epsilon}\big|b_n(t,x)-b(t,x)\big|\to 0$
   pointwise in $t\in\T$.
  Thus, by Lebesgue dominated convergence theorem,
   \begin{align*}
   \sup_{t\in\Tb}\limp_{n\to\infty\vphantom{\Tb}} 
   \bigg|\int_0^t (b_n m_n-b m)(\tau)[\fL\varphi]\,d\tau\bigg| 
   \leq 2\, \epsilon BT \|\fL\varphi\|_\infty.
   \end{align*}
   Since $\epsilon>0$ may be arbitrarily small and $m_n$ are solutions of \autoref{eq:fp},
   because of \autoref{lemma:defn-fp-sol}\autoref{item:D-small},
     $$(m(t) - m_0)[\varphi] = \lim_{n\to\infty} (m_n(t) - m_0)[\varphi]\\
   = \lim_{n\to\infty}\int_0^t b_n m_n(\tau)[\fL\varphi]\,d\tau = \int_0^t b m(\tau)[\fL\varphi]\,d\tau.$$
  Thus $m$ is a solution of \autoref{eq:fp} with parameters $b$ and $m_0$, i.e.~$ m\in\FPm$.
  \end{proof}
  \begin{remark}\label{rem:fp-stability}
   When the solutions of \autoref{eq:fp} are unique,
   \autoref{lemma:fp-continuity} is a standard stability result.
   Indeed, let $\{m_n,m\}$ be (the unique) solutions
   of \autoref{eq:fp} with a fixed initial condition~$m_0$
   and parameters $\{b_n, b\}$ such that $b_n\to b$ uniformly on compact sets in $\X$ for every $t\in\T$.
   By \autoref{lemma:fp-continuity} every subsequence of $\{m_n\}$ 
   has a further subsequence convergent to $m$.
   Thus $m_n\to m$ in $\CPX$.
  \end{remark}
 Next we show that the set of solutions is non-empty.
 \begin{theorem}\label{thm:fp-existence}
  Assume~\ref{L:levy},~\ref{a1':b},~\ref{a:m}.
  \hyperref[eq:fp]{Problem~\eqref{eq:fp}} has a very weak solution.
 \end{theorem}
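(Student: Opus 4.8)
The plan is to construct a very weak solution by approximation: replace the (possibly unbounded) operator $\fLs$ by the bounded operators $(\fL^\epsilon)^*$ produced in \autoref{lemma:approx}, solve the resulting evolution equations in the space of bounded measures, and extract a limit using the compactness results of \autoref{lemma:fp-tightness}. Concretely, for $\epsilon\in(0,1)$ let $\fL^\epsilon$ be as in \autoref{lemma:approx}, with triplet $(c,0,\nu^\epsilon)$ and $\nu^\epsilon(\X)<\infty$ (so in particular $\fL^\epsilon$ satisfies \ref{L:levy}); its formal adjoint acts on $\Mb$ by $(\fL^\epsilon)^*\mu=\int_\X \tau_z\mu\,\nu^\epsilon(dz)-\nu^\epsilon(\X)\mu$, where $\tau_z\mu$ denotes the translate of $\mu$ by $z$, and is a bounded linear operator on $\Mb$ of norm at most $2\nu^\epsilon(\X)$. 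By \ref{a1':b}, $b$ is bounded, continuous and nonnegative, so for each $t$ the map $\mu\mapsto(\fL^\epsilon)^*\big(b(t)\mu\big)$ is a bounded linear operator on $\Mb$ whose norm is bounded uniformly in $t$; standard Banach-space ODE theory (Picard iteration with a weighted norm on $[0,T]$) then yields a unique $m^\epsilon\in C\big([0,T],\Mb\big)$ solving $m^\epsilon(t)=m_0+\int_0^t(\fL^\epsilon)^*\big(b(s)m^\epsilon(s)\big)\,ds$.

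Next I would check that each $m^\epsilon$ is a probability-measure valued very weak solution of the $\fL^\epsilon$-problem. Rewriting the integral equation by variation of constants with the positive integrating factor $\exp\!\big(-\nu^\epsilon(\X)\int_s^t b(r,\cdot)\,dr\big)$ turns it into a fixed-point problem all of whose terms preserve nonnegativity; since the Picard iterates start from the nonnegative measure $m_0$, it follows that $m^\epsilon(t)\ge0$. Translation preserves total mass, so $\big((\fL^\epsilon)^*g\big)(\X)=0$ for every $g\in\Mb$, whence $\tfrac{d}{dt}m^\epsilon(t)(\X)=0$ and $m^\epsilon(t)(\X)=1$; thus $m^\epsilon(t)\in\PX$ for all $t$ and $m^\epsilon\in\CPX$. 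Pairing the integral equation with $\psi\in C_c^\infty(\X)$ and using $\big\langle(\fL^\epsilon)^*(bm^\epsilon),\psi\big\rangle=m^\epsilon\big[b\,\fL^\epsilon\psi\big]$ shows that $m^\epsilon$ satisfies \eqref{eq:fp-weaksolution} with $\fL^\epsilon$ in place of $\fL$ for every time-independent test function; by \autoref{lemma:defn-fp-sol}\autoref{item:D-small}, $m^\epsilon$ is then a very weak solution of the $\fL^\epsilon$-problem, so it lies in the set $\FPm_\epsilon$ of such solutions.

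To pass to the limit, note that for every $\epsilon\in(0,1)$ one has $\mathbbm{1}_{B_1^c}\nu^\epsilon=\mathbbm{1}_{B_1^c}\nu$ (the components $\nu^\epsilon_c,\nu^\epsilon_a,\nu^\epsilon_2$ from \autoref{lemma:approx} are supported in $B_1$ and $\nu^\epsilon_1$ agrees with $\nu$ off $B_1$), so $\{m_0,\mathbbm{1}_{B_1^c}\nu^\epsilon\}_\epsilon$ is tight, and $\sup_\epsilon\big(\|b\|_\infty+\|\fL^\epsilon\|_{\text{\textit{LK}}}\big)<\infty$ by \ref{a1':b} and \autoref{lemma:approx}\autoref{item:fl-bound}. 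Hence \autoref{lemma:fp-tightness}\autoref{item:compactness} applies (with $\lambda=\epsilon$) and $\bigcup_\epsilon\FPm_\epsilon\subset\CPX$ is pre-compact; pick $\epsilon_k\to0$ with $m^{\epsilon_k}\to m$ in $\CPX$. For $\psi\in C_c^\infty(\X)$ and $t\in\Tb$ I would split
\begin{align*}
\int_0^t m^{\epsilon_k}(\tau)\big[b(\tau)\,\fL^{\epsilon_k}\psi\big]\,d\tau
&=\int_0^t m^{\epsilon_k}(\tau)\big[b(\tau)\,\fL\psi\big]\,d\tau\\
&\quad+\int_0^t m^{\epsilon_k}(\tau)\big[b(\tau)\,(\fL^{\epsilon_k}\psi-\fL\psi)\big]\,d\tau,
\end{align*}
where the last integral is $O\big(T\|b\|_\infty\|\fL^{\epsilon_k}\psi-\fL\psi\|_\infty\big)\to0$ by \autoref{lemma:approx}\autoref{item:fl-convergence}, while for each $\tau$ we have $m^{\epsilon_k}(\tau)\to m(\tau)$ weakly and $b(\tau)\fL\psi\in C_b(\X)$, so the first integrand converges pointwise and is dominated by $\|b\|_\infty\|\fL\psi\|_\infty$; dominated convergence identifies its limit. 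Passing to the limit in \eqref{eq:fp-weaksolution} (for time-independent $\psi$) gives $m(t)[\psi]=m_0[\psi]+\int_0^t m(\tau)\big[b(\tau)\fL\psi\big]\,d\tau$ for all $\psi\in C_c^\infty(\X)$ and $t\in\Tb$, and by \autoref{lemma:defn-fp-sol}\autoref{item:D-small} this makes $m$ a very weak solution of \eqref{eq:fp}.

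The step I expect to need the most care is the second one: keeping the approximate solutions inside $\PX$ (nonnegativity \emph{and} conservation of mass) and, equally, verifying that the approximating family $(\fL^\epsilon,b,m_0)_\epsilon$ meets the hypotheses of \autoref{lemma:fp-tightness} uniformly in $\epsilon$ --- which is precisely the purpose of the Lyapunov-function tightness results (\autoref{prop:lyapunov}, \autoref{rem:levy-lyapunov}) that underlie \autoref{lemma:fp-tightness}.
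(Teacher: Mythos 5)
Your proposal is correct and follows essentially the same route as the paper: approximate $\fL$ by the bounded operators of \autoref{lemma:approx}, solve the approximate problem by a Banach fixed-point argument, verify nonnegativity and mass conservation, invoke \autoref{lemma:fp-tightness} for compactness in $\CPX$, and pass to the limit using \autoref{lemma:approx}\autoref{item:fl-convergence}. The only (harmless) differences are implementation details --- the paper mollifies $m_0$ and replaces $b$ by $b+\epsilon$ so as to work in $L^1(\X)$ and prove positivity via a $\sgn(\cdot)^-$ argument, whereas you work directly in $\Mb$ and obtain positivity from the integrating-factor form of the Picard iteration.
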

 \begin{proof*}
 \begin{step*}{Approximate problem}
  For $\epsilon\in(0,1)$, let $\fL^\epsilon$ be the sequence of approximations
  of operator $\fL$ given by \autoref{lemma:approx}
  and $\nu^\epsilon$, $\fL^{\epsilon\,*}$ be their L\'evy measures and adjoint operators,
  respectively.
  
  By~\eqref{eq:fl-1} and the Fubini theorem, for every $\mu\in L^1(\X)$ we have
  \begin{align}\label{eq:fl-0}
   \int_\X\fL^{\epsilon\,*} \mu\,dx 
   = \int_\X\int_\X \big(\mu(x-z)-\mu(x)\big)\,dx\,\nu^\epsilon(dz)=0.
  \end{align}
 
  Let $b_\epsilon = b+\epsilon$ and $\mu_{0,\epsilon} = m_0*\rho_\epsilon$,
  where $\{\rho_\epsilon\}_{\epsilon\in(0,1)}$
  is the sequence of standard mollifiers.
  For $\epsilon\in(0,1)$ we consider the following family of problems 
  \begin{align}\label{eq:approx}
   \left\{
   \begin{aligned}
    &\dt \mu = \fL^{\epsilon\,*}(b_\epsilon \mu)\quad&\text{on $\T\times\X$},\\
    &\mu(0)=\mu_{0,\epsilon}\quad&\text{on $\X$}.
   \end{aligned}
   \right.
  \end{align}
   \end{step*}
 \begin{step*}{Existence of approximate solution $\mu_\varepsilon$\label{step:contraction}} 
  For $\mu\in\CLX$, define
  \begin{align}\label{eq:operator-G}
   \mathcal{G}_\epsilon(\mu)(t) 
   = \mu(0) + \int_0^t \fL^{\epsilon\,*}\big(b_\epsilon\mu\big)(\tau)\,d\tau.
  \end{align}
  We observe that for every $t_0\in\T$, because $\|b_\epsilon\|_\infty<\|b\|_\infty +1$,
  \begin{align*}
   \mathcal{G}_\epsilon : 
   C\big([0,t_0],L^1(\X)\big)\to C\big([0,t_0],L^1(\X)\big) \cap C^1\big((0,t_0],L^1(\X)\big)
  \end{align*}
  is a bounded linear operator.
 
  Let $\mu_1,\mu_2\in\CLX$ be such that $\mu_1(0)=\mu_2(0)$ and take
   $t_\epsilon = \frac{\epsilon^3}{4\,c_{\fL} \|b_\epsilon\|_\infty}$,
  where $c_{\fL}$ is the constant given by \autoref{lemma:approx}.
  Then, because of \autoref{lemma:approx}\autoref{item:eps-cube},
  \begin{multline*}
   \sup_{t\in[0,t_\epsilon]}\|\mathcal{G}_\epsilon(\mu_1-\mu_2)(t)\|_{L^1(\X)} 
   = \sup_{t\in[0,t_\epsilon]}
   \bigg\|\int_0^t\fL^{\epsilon\,*}\big(b_\epsilon(\mu_1-\mu_2)\big)(\tau)\,d\tau \bigg\|_{L^1(\X)}\\
   \leq t_\epsilon\frac{2\,c_{\fL} \|b_\epsilon\|_\infty }{\epsilon^3}
    \sup_{t\in[0,t_\epsilon]}\|\mu_1-\mu_2\|_{L^1(\X)}
   \leq \frac12\sup_{t\in[0,t_\epsilon]}\|\mu_1-\mu_2\|_{L^1(\X)}.
  \end{multline*}
  Therefore, by the Banach fixed point theorem, \autoref{eq:approx}
  has a unique solution $\mu_{\epsilon}\in C\big([0,t_\epsilon],L^1(\X)\big)$ for every $\epsilon>0$.
  Since $t_\epsilon>0$ is constant for fixed \mbox{$\epsilon>0$},
  we may immediately extend this solution to the interval $\Tb$
  and conclude that \autoref{eq:approx} has a unique solution in the space 
  $\CLX \cap C^1\big(\T,L^1(\X)\big)$.
 \end{step*}
 \begin{step*}{Compactness of $\{\mu_\varepsilon\}$ in $\CPX$}\label{step:compactness}\footnote{First we show that $\{\mu_\epsilon\}\subset C\big(\T,\mathcal{P}_{ac}(\X)\big)$, then we establish its tightness.}
  Because of the regularity of $\mu_\epsilon$ obtained in \autoref{step:contraction}, we have
  \begin{align}\label{eq:approx-L1-soln}
   \dt \mu_\epsilon = \fL^{\epsilon\,*} (b_\epsilon\mu_\epsilon)
   \quad\text{in}\quad
   C\big(\T,L^1(\X)\big).
  \end{align}
  Consider $\sgn(u)^-=\mathbbm{1}_{\{u<0\}}$.
  Then, by \eqref{eq:approx-L1-soln},
  \begin{align*}
   \int_0^t\int_\X\dt \mu_\epsilon \sgn (\mu_\epsilon)^-\,dx\,d\tau
   = \int_0^t\int_\X\fL^{\epsilon\,*} (b_\epsilon\mu_\epsilon)\sgn(\mu_\epsilon)^-\,dx\,d\tau.
  \end{align*}
 Since $b_\epsilon>0$, we have  $\sgn (\mu_\epsilon)^-= \sgn (b_\epsilon \mu_\epsilon)^-$ and for arbitrary real functions $u,v$,
 $v \sgn(u)^- \geq v\sgn(v)^-=-(v)^-$.
Therefore
  \begin{multline*}
   \big(\fL^{\epsilon\,*} (b_\epsilon\mu_{\epsilon})\sgn(\mu_\epsilon)^-\big)(x)
   = \int_\X\!\! \big(b_\epsilon\mu_{\epsilon}(x-z)-b_\epsilon\mu_{\epsilon}(x)\big)
     \sgn (b_\epsilon\mu_\epsilon)^-(x)\,\nu^\epsilon(dz)\\
    \geq -\int_\X \big((b_\epsilon\mu_{\epsilon})^-(x-z)
     -(b_\epsilon\mu_\epsilon)^-(x)\big)\,\nu^\epsilon(dz)
   =-\fL^{\epsilon\,*}\big((b_\epsilon\mu_{\epsilon})^-\big)(x).
  \end{multline*}
  By~\eqref{eq:fl-0},
  $\int_\X \fL^{\epsilon\,*}\big((b_\epsilon\mu_{\epsilon})^-
 \big)\,dx = 0$.
  Hence
  \begin{align*}
   0&\leq \int_0^t\int_\X \dt \mu_\epsilon\sgn (\mu_\epsilon)^-\,dx\,d\tau 
    = \int_0^t\int_\X-\dt (\mu_\epsilon)^-\,dx\,d\tau \\
   &= \int_\X(\mu_{0,\epsilon})^-\,dx - \int_\X(\mu_\epsilon)^-(t)\,dx.
  \end{align*}
  Since $\mu_{0,\epsilon}=m_0*\rho_\epsilon\geq 0$, i.e.~$(\mu_{0,\epsilon})^-=0$, and $(\mu_\epsilon)^-\geq 0$, this implies
  \begin{align*}
   0\leq \int_\X(\mu_\epsilon)^-(t)\,dx \leq  \int_\X(\mu_{0,\epsilon})^-\,dx = 0.
  \end{align*}
  Therefore $\mu_\epsilon(t)\geq 0$ for every $t\in\Tb$.
  
  By \autoref{step:contraction}, $\mu_{\epsilon}$ is the fixed point of $\mathcal{G}_\epsilon$.
  Thus, because of~\eqref{eq:fl-0},~\eqref{eq:operator-G}, and the Fubini--Tonelli theorem, we have
  \begin{align*}
   \int_\X\mu_\epsilon(t)\,dx 
   = \int_\X \mu_{0,\epsilon}\,dx + \int_0^t\int_\X \fL^{\epsilon\,*} (b_\epsilon\mu_{\epsilon})\,dx\,d\tau = 1.
  \end{align*}
  This, together with $\mu_\epsilon\geq 0$, means that $\mu_\epsilon(t)\in\mathcal{P}_{ac}(\X)$ for every
  $t\in\Tb$.
  Since $\mu_\epsilon\in\CLX$, it follows that $\mu_\epsilon\in C\big(\Tb,\mathcal{P}_{ac}(\X)\big)$.
 
  Notice that $\|b_\epsilon\|_\infty \leq \|b+1\|_\infty <B+1$ by~\ref{a1':b}.
  Let $V$ be a Lyapunov function such that $m_0[V],\|\fL V\|_\infty <\infty$
  (see \autoref{rem:levy-lyapunov}).
  By~\autoref{def:lyapunov},
  \begin{align*}
   \mu_{0,\epsilon}[V] =   (m_0*\rho_\epsilon)[V]
   \leq  m_0[V] + \|\nabla V\|_\infty \int_{B_1} |z|\,\rho_{\epsilon}(z)\,dz 
   \leq  m_0[V] +1.
  \end{align*}
  In combination with \autoref{lemma:approx}\autoref{item:fl-bound} we get
  \begin{align*}
   \sup_{\epsilon\in(0,1)}
   \big(b_\epsilon+\mu_{0,\epsilon}[V]+\|\fL^\epsilon V\|_\infty+\|\fL^\epsilon\|_{\text{\textit{LK}}}\big)
   <\infty.
  \end{align*}
  It follows from \autoref{lemma:fp-tightness} that the family $\{\mu_\epsilon\}$
  is pre-compact in $\CPX$.
 \end{step*}
 \begin{step*}{Passing to the limit}
  Using the result of~\autoref{step:compactness},
  let $\epsilon_k$ be a sequence such that 
  $\mu_{\epsilon_k}\to m$ in $\CPX$.
  By~\eqref{eq:approx-L1-soln},
  for every \mbox{$\epsilon_k$, $\varphi\in C_c^\infty(\X)$} and $s,t \in\Tb$, because $b_{\epsilon_k}-b = \epsilon_k$,
  \begin{align*}
  \begin{split}
   &\mu_{\epsilon_k}(t)[\varphi]-\mu_{\epsilon_k}(s)[\varphi] 
   = \int_s^t\int_\X\big(\fL^{\epsilon_k\,*} (\mu_{\epsilon_k} b_{\epsilon_k})\big)\varphi\,dx\,d\tau \\
   &= \epsilon_k\int_s^t\mu_{\epsilon_k}[\fL^{\epsilon_k}\varphi]\,d\tau
   +\int_s^t\mu_{\epsilon_k}\big[b(\fL^{\epsilon_k}\varphi-\fL\varphi)\big]\,d\tau
      +\int_s^t\mu_{\epsilon_k}[b\fL\varphi]\,d\tau.
  \end{split}
  \end{align*}
  Since $\lim\limits_{k\to\infty}\|\fL^{\epsilon_k}\varphi-\fL\varphi\|_\infty = 0$,
  by \autoref{lemma:approx}\autoref{item:fl-convergence}
  and the H\"older inequality,
  \begin{align*}
   m(t)[\varphi]-m(s)[\varphi] = \int_s^t m(\tau)[b(\tau)\fL\varphi]\,d\tau.
  \end{align*}
  It follows that $m$ is a very weak solution of \autoref{eq:fp}
  (see \autoref{lemma:defn-fp-sol}\autoref{item:D-small}).
 \vspace{-14pt}\[\ \]\vspace{-\belowdisplayskip}
 \end{step*}
 \end{proof*}
\subsection{Uniqueness}
Uniqueness for \autoref{eq:fp} holds when $b$ is more regular:
 \begin{description*}\vspace{0.33\baselineskip}
\item[(B$\,^{\prime}$)\label{a1'':b}] $b$ satisfies~\ref{a1':b}; in addition,
  $b\in\LC{\hb}$ for some $\hb>0$.
 \end{description*}
 \smallskip
 This condition is valid for $b=F'(\fL u)$ when $F'\in \Holder{\gamma}(\R)$ with $\gamma>0$ and $\fL u$ is smooth (\autoref{thm:hjb-interior}\autoref{item:uniform-cont-local}).
 \begin{theorem}\label{thm:fp-uniqueness}
  Assume~\ref{a1'':b} on $[0,t]$ for every $t\in\T$ and \ref{a:m}. If 
  either\smallskip
  \begin{enumerate}
      \item\label{item:fp-uniq-ndeg1} \ref{L:ndeg-loc},   $b\geq\kappa$ for some $\kappa>0$, and 
      $b\in\UC\big([0,t]\times\X\big)$ for every $t\in\T$; or
      \item\label{item:fp-uniq-ndeg} \ref{L:ndeg-nloc}
      and  $b\geq\kappa$ for some $\kappa>0$;
  \end{enumerate}\smallskip
  then \autoref{eq:fp}
  has precisely one very weak solution.
 \end{theorem} 
 
 We show uniqueness of solutions of \autoref{eq:fp}
 using a Holmgren-type argument.
 The idea is to use a solution of the ``dual'' equation,
\begin{align}\label{eq:fp-dual}
 \left\{
 \begin{aligned}
   \dt w &- b\fL w=0\quad&\text{on $\T\times\X$},\\
   w(0) &= \phi\quad&\text{on $\X$},
 \end{aligned}
 \right.
\end{align} 
as a test function in \autoref{def:fp-weak}.
For simplicity we consider a forward-in-time problem and then reverse time 
in the proof of \autoref{thm:fp-uniqueness}. 
We need sufficient regularity of solutions (see \autoref{lemma:defn-fp-sol})
when $\phi$ is taken from a dense subset of  $C(\X)$. 
Because of the non-degeneracy of the operator $\fL$
and the standard uniform ellipticity assumption $b\geq\kappa>0$,
existing results suffice to conclude.\footnote{In \cite{IJK03b}
we prove uniqueness for a degenerate case
of the Fokker--Planck equation~\eqref{eq:fp}.}
  \begin{lemma}\label{thm:ext_reg_soln_fp-dual}
Under the assumptions of \autoref{thm:fp-uniqueness} 
there exists a bounded classical solution of \autoref{eq:fp-dual}.
 \end{lemma}
 \begin{proof}
 \begin{part*}
  The statement follows from~\cite[Theorem~5.1.9]{MR3012216}
  (see \cite[page~175]{MR3012216} for relevant notation).
  \end{part*}
 \begin{part*} 
  Because $\phi \in C_c^\infty(\X)$, we have $\fL \phi \in C^\infty_b(\X)$ and thus
  by~\ref{a1'':b} we get $b\,\fL \phi\in\CC{\hb}$.
  Notice that $w$ is a bounded classical solution of \autoref{eq:fp-dual}
  if and only if $v=w-\phi$ is a bounded classical solution of
  \begin{align}\label{eq:fp-dual_zeroint}
   \left\{
   \begin{aligned}
    & \dt v - b\fL v=b\fL \phi\quad&\text{on $\T\times\X$},\\
    & v(0) = 0\quad&\text{on $\X$}.
   \end{aligned}
   \right.
  \end{align} 

  We study \autoref{eq:fp-dual_zeroint} using the results in~\cite{MR3201992}. We write $b\,\fL=A+B$, where
 \begin{align}
 \label{eq:operator_A}
  \hspace{-.5em}(A\phi)(t,x) = \int_\X \big(\phi(t,x+z)-\phi(t,x) 
  -\mathbbm{1}_{[1,2)}(2\sigma)\, z\cdot \nabla\phi(x) \big)
  b(t,x)\frac{\widetilde k(z)}{|z|^{d+2\sigma}}\,dz,
 \end{align}
 $\widetilde k(z) = \mathbbm{1}_{B_1}k(z) + \mathbbm{1}_{B_1^c}k(\frac{z}{|z|})$  is a normal extension of $k$ (defined in \ref{L:ndeg-nloc}) to $\X$, 
 and $B=b\,\fL-A:C_b(\X)\to C_b(\X)$ is a bounded operator (with L\'evy measure supported on $B_1^c$).
  We check the assumptions for operators $A$ and $B$ given by~\eqref{eq:operator_A}.
  Assumption \textbf{A} in~\cite{MR3201992} is satisfied,
  because we assume~\ref{L:ndeg-nloc},~\ref{a1'':b}, and $b\geq \kappa>0$.
  To verify assumptions \textbf{B1} and \textbf{B2} in~\cite{MR3201992},
  we choose $c(t, x, \upsilon) =\upsilon$, $U_n = B_1$, and $\pi = \nu|_{B_1^c}$
  (in the notation of~\cite{MR3201992}) and again use~\ref{L:ndeg-nloc},~\ref{a1'':b}.
  
  By~\cite[Theorem 4]{MR3201992} there exists a unique solution $v$ of \autoref{eq:fp-dual_zeroint}
  such that $\fL v\in\LC{\hb}$
  and $\dt v \in\Cb$ (see \cite[Definition~3]{MR3201992}).
  Thus $w = v- \phi$ is a bounded classical solution of \autoref{eq:fp-dual}.
  \vspace{-14pt}\[\ \]\vspace{-\belowdisplayskip}
  \end{part*}
 \end{proof} 
 
 \begin{proof}[Proof of \autoref{thm:fp-uniqueness}]
  Existence of a very weak solution follows by \autoref{thm:fp-existence}.
  Fix arbitrary $\varphi \in C_c^\infty(\X)$ and $t_0\in(0,T]$, and
  take $\widetilde{b}(t) = b(t_0-t)$ for every $t\in[0,t_0]$.
  Replace $b$ by $\widetilde{b}$ in \autoref{eq:fp-dual}.
  Then there exists a bounded classical solution $\widetilde{w}$ of \autoref{eq:fp-dual}
  --- by \autoref{thm:ext_reg_soln_fp-dual}.
Let $w(t)= \widetilde{w}(t_0-t)$ for $t\in [0,t_0]$.
  Then $\dt w, \fL w\in C\big((0,t_0)\times\X\big)$, and $w$ is a bounded classical solution of 
  \begin{align}\label{eq:fp-dual-back-nondeg}
   \left\{
   \begin{aligned}
    \dt w(t) &+ b(t)\fL w(t)=0\quad\text{in $(0,t_0)\times \X$},\\
    w(t_0) &= \varphi.
   \end{aligned}
   \right.
  \end{align}

  Suppose $m$ and $\widehat{m}$ are two very weak solutions of \autoref{eq:fp}
  with the same initial condition $m_0$ and coefficient $b$.
  By \autoref{def:fp-weak} (see \autoref{lemma:defn-fp-sol}\autoref{item:D-large})
  and~\eqref{eq:fp-dual-back-nondeg},
  \begin{align*}
   \big(m(t_0)-\widehat{m}(t_0)\big)[\varphi]
   =\int_0^{t_0} \big(m(\tau)-\widehat{m}(\tau)\big)\big[\dt w + b \fL(w)\big]\,d\tau=0.
  \end{align*}
  Hence, for every $t \in (0,T]$ and $\varphi\in C_c^\infty(\X)$,
   $(m(t) - \widehat{m}(t))[\varphi] =0$,
  which means that $m(t)=\widehat m(t)$ in $\PX$.
 \end{proof}
 \begin{corollary}\label{cor:b-uniq}
  Assume~\ref{a:F2}, \ref{a:m}, \ref{D:ndeg}.
  Condition \ref{U2} is satisfied if either
  \begin{align*}
      &(i)\quad \text{\ref{F:ndeg} and \ref{L:ndeg-loc}}
      & or &
      &(ii)\quad \text{\ref{a:F1'} and  \ref{L:ndeg-nloc}}
  \end{align*}
 \end{corollary}
 \begin{proof}
  Let $u_1,\,u_2\in \HJcD$ and $v_1=\fL u_1$, $v_2=\fL u_2$.
  Since $F'\in \Holder{\gamma}(\R)$ by \ref{a:F1}, we may consider 
  \begin{align*}
   b(t,x) = \int_0^1 F'\big(sv_1(t,x)+(1-s)v_2(t,x)\big)\,ds.
  \end{align*}
  Because $u_1,\,u_2\in \HJcD$ and $F'\geq0$, we have $b\in C\big(\T\times\X\big)$ and
  $b\geq0$.
  \smallskip\par
  \begin{part*}
  By \autoref{lemma:hjb-local-wang}
  and \autoref{thm:hjb-interior}\autoref{item:uniform-cont-local},
  $v_1,\,v_2 \in \Cb$ and 
  $v_1,\,v_2\in B\big([0,t],\Hb{\hd}\big)\cap\UC\big([0,t]\times\X\big)$
  for every $t\in\T$.
  Thus $b$ satisfies~\ref{a1'':b} on $[0,t]$ with $\hb = \gamma\hd$
  and $b\in\UC\big([0,t]\times\X\big)$.
  Since $F'\geq\kappa>0$, we have $b\geq \kappa>0$
  and \ref{U2} follows from \autoref{thm:fp-uniqueness}\autoref{item:fp-uniq-ndeg1}.
  \end{part*}
   \begin{part*}
  By \autoref{lemma:regularity}
  and \autoref{thm:hjb-interior}\autoref{item:uniform-cont-local},
  $v_1,\,v_2 \in \Cb$ and 
  $v_1,\,v_2\in B\big([0,t],\Hb{\hd}\big)$
  for every $t\in\T$.
  Thus $b$ satisfies~\ref{a1'':b} on $[0,t]$ with $\hb = \gamma\hd$.
  Since $F'\geq\kappa>0$, we have $b\geq \kappa>0$
  and \ref{U2} follows from \autoref{thm:fp-uniqueness}\autoref{item:fp-uniq-ndeg}. 
  \vspace{-14pt}\[\ \]\vspace{-\belowdisplayskip}
  \end{part*}
  \end{proof} 
 \end{section}
\begin{section}{The Mean Field Game system}\label{sec:mfg}
 In this section we prove existence and uniqueness for \autoref{eq:mfg} under general assumptions.
 These results yield a proof of \autoref{thm:main}.
 For the proof of existence, based on the Kakutani--Glicksberg--Fan fixed point theorem,
 we need to recall some ter\-mi\-no\-logy concerning set-valued maps.
 \begin{definition}
  A set-valued map $\mathcal{K}: X \to 2^Y$ is compact
  if the image $\mathcal{K}(X) = \bigcup\{\mathcal{K}(x) : x \in X\}$
  is contained in a compact subset of $Y$.
 \end{definition}
 \begin{definition}
  A set-valued map $\mathcal{K}:X\to 2^Y$ is upper-semicontinuous if,
  for each open set $A\subset Y$, the set  $\mathcal{K}^{-1}(2^A) =\{x:\mathcal{K}(x) \subset A\}$ is open.
  \end{definition}
 \begin{theorem}[Kakutani--Glicksberg--Fan~{\cite[\S7~Theorem~8.4]{MR1987179}}]\label{thm:kakutani}
  Let $\mathcal{S}$ be a convex subset of a normed space and
  $\mathcal{K} : \mathcal{S}\to 2^\mathcal{S}$ be a compact set-valued map.
  If $\mathcal{K}$ is upper-semicontinuous with non-empty compact convex values,
  then $\mathcal{K}$ has a fixed point, i.e.~there exists $x\in\mathcal{S}$ such that
  $x\in\mathcal{K}(x)$.\qed
 \end{theorem}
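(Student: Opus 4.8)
The plan is to derive the statement from Brouwer's fixed point theorem in three stages: first reduce to a compact convex domain; then settle the finite-dimensional case (Kakutani's theorem); and finally lift to an arbitrary normed space by finite-dimensional approximation. Throughout I will use the elementary fact that a set-valued map into a compact space with compact values is upper-semicontinuous if and only if it has closed graph.

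\textbf{Reduction to a compact domain.} First I would observe that, since $\mathcal{K}$ is compact, $\mathcal{K}(\mathcal{S})$ lies in a compact subset of $\mathcal{S}$, so by Mazur's theorem its closed convex hull $C = \overline{\mathrm{conv}}\,\mathcal{K}(\mathcal{S})$ is compact, convex, and contained in $\mathcal{S}$. As $\mathcal{K}(C) \subseteq \mathcal{K}(\mathcal{S}) \subseteq C$, the restriction $\mathcal{K}|_C$ is a self-map of $C$ still satisfying all the hypotheses, and any fixed point of $\mathcal{K}|_C$ is a fixed point of $\mathcal{K}$. Hence one may assume from the start that $\mathcal{S}$ is compact and convex.

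\textbf{The finite-dimensional case.} Assuming now $\mathcal{S}\subset\R^n$ compact convex, for each $k\in\N$ I would triangulate $\mathcal{S}$ with mesh tending to $0$, choose $f_k(v)\in\mathcal{K}(v)$ at every vertex $v$, and extend $f_k$ affinely over each simplex, producing a continuous self-map $f_k\colon\mathcal{S}\to\mathcal{S}$. Brouwer's theorem then provides $x_k=f_k(x_k)$; writing $x_k=\sum_{i=0}^n\lambda_i^k v_i^k$ over the vertices of the simplex containing it gives $x_k=\sum_i\lambda_i^k f_k(v_i^k)$ with $f_k(v_i^k)\in\mathcal{K}(v_i^k)$. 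Along a subsequence $x_k\to x^\ast$, $v_i^k\to x^\ast$, $\lambda_i^k\to\lambda_i$ and $f_k(v_i^k)\to y_i$; the closed-graph property yields $y_i\in\mathcal{K}(x^\ast)$, and convexity of the values gives $x^\ast=\sum_i\lambda_i y_i\in\mathcal{K}(x^\ast)$.

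\textbf{The general case and the main obstacle.} Finally, for $\mathcal{S}$ compact convex in a normed space and $\epsilon>0$, I would use a Lebesgue-number argument based on upper semicontinuity to produce finitely many $p_1,\dots,p_N\in\mathcal{S}$, a subordinate continuous partition of unity $\{\phi_j\}$, and points $y_j\in\mathcal{K}(p_j)$ such that $f_\epsilon:=\sum_j\phi_j\,y_j$ is continuous, maps $\mathcal{S}$ into the finite-dimensional polytope $P_\epsilon:=\mathrm{conv}\{y_1,\dots,y_N\}\subseteq\mathcal{S}$, and is an $\epsilon$-approximate selection of $\mathcal{K}$ --- here convexity of the values of $\mathcal{K}$ is what lets one pass from the statement that each active $y_j$ lies within $\epsilon$ of $\mathcal{K}(x)$ to the conclusion that $f_\epsilon(x)$ lies within $\epsilon$ of $\mathcal{K}(x)$. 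Applying the finite-dimensional case to $f_\epsilon|_{P_\epsilon}$ gives $x_\epsilon=f_\epsilon(x_\epsilon)$; letting $\epsilon\to0$ along a subnet with $x_\epsilon\to x^\ast$ (using compactness of $\mathcal{S}$), the pairs $(x_\epsilon,x_\epsilon)$ stay within $O(\epsilon)$ of the closed set $\mathrm{graph}\,\mathcal{K}$, so $x^\ast\in\mathcal{K}(x^\ast)$. I expect this last stage to be the hard part: constructing, for each $\epsilon$, a continuous finite-dimensional self-map of $\mathcal{S}$ whose graph approximates that of $\mathcal{K}$ uniformly --- which crucially exploits convexity of the values --- and then justifying the passage to the limit through the equivalence between upper semicontinuity and closedness of the graph. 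Brouwer's theorem itself I take as given.
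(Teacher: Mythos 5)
First, a point of reference: the paper does not prove this statement at all --- it is imported verbatim from Granas--Dugundji with a citation and closed immediately, so there is no in-paper argument to compare yours against. What you propose is essentially the standard textbook proof (Cellina-type approximate continuous selections with values in a finite-dimensional polytope, Brouwer on the polytope, closed-graph passage to the limit), which is also the route taken in the cited reference. Your second and third stages are sound in outline, modulo the usual fiddly point in stage three: the $\epsilon$-closeness of the active $y_j$ is to $\mathcal{K}(p_{j_0})$ for a single well-chosen reference point $p_{j_0}$ near $x$ (the standard device is to take $j_0$ maximizing the radius $\delta_j$ among the active indices), not to $\mathcal{K}(x)$ itself, since upper semicontinuity controls the values at nearby points by the value at the centre and not conversely.

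The genuine gap is the first reduction. Mazur's theorem (compactness of the closed convex hull of a compact set) requires completeness and fails in a general normed space: in $c_{00}$ with the $\ell^2$ norm, the set $K=\{0\}\cup\{e_n/n:n\geq1\}$ is compact, yet $\overline{\mathrm{conv}}\,K$ contains the Cauchy sequence $s_N=\sum_{n=1}^N 2^{-n}e_n/n$, which has no subsequential limit in $c_{00}$, so the closed convex hull is not compact. This generality is not idle here: the theorem is stated for an arbitrary normed space, and the paper applies it inside $\big(C(\Tb,\Mb),\sup_t\|\cdot\|_0\big)$, which it explicitly notes is not completely metrisable. (There is a secondary issue as well: when $\mathcal{S}$ is not closed, $\overline{\mathrm{conv}}\,\mathcal{K}(\mathcal{S})$ need not stay inside $\mathcal{S}$.) The repair is to drop the reduction entirely. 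Your third stage already confines everything to the compact polytope $P_\epsilon$, where Brouwer applies directly to the single-valued $f_\epsilon$; and for the limit you do not need $\mathcal{S}$ compact: since $(x_\epsilon,x_\epsilon)$ lies within $O(\epsilon)$ of the graph, there are points $z_\epsilon\in\mathcal{K}(\mathcal{S})$ with $\|x_\epsilon-z_\epsilon\|<\epsilon$, and $\mathcal{K}(\mathcal{S})$ lies in a compact subset of $\mathcal{S}$ by hypothesis, so a convergent subnet of $z_\epsilon$ (hence of $x_\epsilon$, with limit in $\mathcal{S}$) exists and the closed-graph argument closes as you describe. Once this is done, note that your middle stage --- the triangulation proof of finite-dimensional Kakutani --- is never actually invoked; only Brouwer's theorem on $P_\epsilon$ is used.
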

 In addition, the following lemma lets us express upper-semicontinuity in terms of sequences, which 
 are easier to handle (cf.~\autoref{lemma:fp-continuity}).
 \begin{lemma}[{\cite[\S43.II~Theorem~1]{MR0259835}}]\label{lemma:semi-cont}
  Let $X$ be a Hausdorff space and $Y$ a compact metric space.
  A set-valued compact map $\mathcal{K}: X \to 2^Y$ is upper-semicontinuous
  if and only if the conditions 
  \begin{align*}
   x_n\to x\ \text{in}\ X,&& y_n\to y\ \text{in}\ Y,&&\text{and}&& y_n\in\mathcal{K}(x_n)
  \end{align*}
   imply $y\in\mathcal{K}(x)$.\qed
  \end{lemma}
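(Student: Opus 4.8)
The plan is to transfer everything to the graph of $\mathcal{K}$ and then invoke the two classical facts linking upper-semicontinuity, closedness of the graph, and compactness of the image. Write $G=\{(x,y)\in X\times Y:y\in\mathcal{K}(x)\}$ for the graph, and recall that by compactness of $\mathcal{K}$ there is a compact set $C\subset Y$ with $\mathcal{K}(X)\subset C$, so $G\subset X\times C$. I will use the standard convention (harmless here) that a set-valued map is closed-valued: under the sequential hypothesis in the statement this is automatic, by applying the condition to a constant sequence $x_n\equiv x$, and the maps to which the lemma is applied in \autoref{sec:mfg} are in fact compact-valued.

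For the direction ``$\mathcal{K}$ upper-semicontinuous $\Rightarrow$ the sequential condition'', the first step I would carry out is to show that $G$ is closed in $X\times Y$. Given $(x_0,y_0)\notin G$, i.e.\ $y_0\notin\mathcal{K}(x_0)$, use that $\mathcal{K}(x_0)$ is closed and $Y$ is metric (hence $\varepsilon=\mathrm{dist}(y_0,\mathcal{K}(x_0))>0$) to separate $y_0$ from $\mathcal{K}(x_0)$ by disjoint open sets $V\ni y_0$ and $W\supset\mathcal{K}(x_0)$. Then $U=\{x:\mathcal{K}(x)\subset W\}$ is open by upper-semicontinuity, $x_0\in U$, and $U\times V$ is a neighbourhood of $(x_0,y_0)$ missing $G$. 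Since a closed set is sequentially closed in any topological space, closedness of $G$ immediately yields the implication ``$x_n\to x$, $y_n\to y$, $y_n\in\mathcal{K}(x_n)$ $\Rightarrow$ $y\in\mathcal{K}(x)$''; this half needs nothing beyond $X$ Hausdorff.

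For the converse, ``the sequential condition $\Rightarrow$ $\mathcal{K}$ upper-semicontinuous'', I would argue by contradiction. If $\{x:\mathcal{K}(x)\subset A\}$ fails to be open for some open $A\subset Y$, there is $x_0$ with $\mathcal{K}(x_0)\subset A$ lying in the closure of $\{x:\mathcal{K}(x)\not\subset A\}$; choose $x_n\to x_0$ in that set and $y_n\in\mathcal{K}(x_n)\setminus A$. Since $y_n\in C$ and $C$ is compact metric, a subsequence $y_{n_k}$ converges to some $y\in C$, and because each $y_{n_k}$ lies in the closed set $Y\setminus A$, so does $y$. The sequential condition then forces $y\in\mathcal{K}(x_0)\subset A$, a contradiction. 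Hence $\mathcal{K}$ is upper-semicontinuous. The step $x_n\to x_0$ uses first-countability of $X$, which is available in all applications of the lemma in this paper (there $X=\CPX$, which is metrizable).

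The step I expect to need the most care is precisely the converse, because it is where the compactness and metrizability of $Y$ are essential: without relative compactness of $\mathcal{K}(X)$ in $Y$ one cannot extract the convergent subsequence $y_{n_k}$. For a version valid for arbitrary Hausdorff $X$, I would instead run the converse through closedness of the graph together with a tube-lemma argument: assuming $G$ closed (which, for first-countable $X$, is what the sequential condition gives), fix $x_0$ with $\mathcal{K}(x_0)\subset A$; for each $y$ in the compact set $C\setminus A$ pick a basic open box $U_y\times V_y$ around $(x_0,y)$ disjoint from $G$; extract a finite subcover $V_{y_1},\dots,V_{y_k}$ of $C\setminus A$; then $\bigcap_{i=1}^k U_{y_i}$ is a neighbourhood of $x_0$ contained in $\{x:\mathcal{K}(x)\subset A\}$, since any $y'\in\mathcal{K}(x)\setminus A$ would lie in some $V_{y_i}$ with $x\in U_{y_i}$, contradicting $(x,y')\in G$. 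Each ingredient is entirely routine; the only genuinely delicate point is the passage from ``sequentially closed graph'' to ``closed graph'' for non-first-countable $X$, which is not needed for the uses made of the lemma here.
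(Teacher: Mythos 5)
The paper offers no proof of this lemma at all---it is imported from Kuratowski with the tombstone attached to the statement---so your argument is a self-contained substitute rather than a parallel to anything in the text. It is the standard argument and it is correct: both directions pass through the graph $G$, with ``usc together with closed values $\Rightarrow$ $G$ closed $\Rightarrow$ $G$ sequentially closed'' one way, and ``sequential condition, plus compactness of $\mathcal{K}(X)$ in the metric space $Y$, $\Rightarrow$ usc'' the other; your alternative tube-lemma derivation of the converse from closedness of $G$ is also sound. Two caveats, both of which you at least partially flag, deserve to be made explicit. First, the ``only if'' direction genuinely needs $\mathcal{K}$ to be closed-valued (take $X=\{\ast\}$, $Y=[0,1]$, $\mathcal{K}(\ast)=(0,1]$: trivially usc, but the sequential condition fails at $y=0$), and you cannot obtain closed-valuedness from the sequential hypothesis in that direction without circularity; it must be taken as a standing assumption, as it implicitly is in Kuratowski's framework, and it holds in the paper's application since the values $\FPm$ are compact by \autoref{cor:m-compact}. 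Second, the converse really does require $X$ first-countable, not merely Hausdorff: a point in the closure of $\{x:\mathcal{K}(x)\not\subset A\}$ need not be a sequential limit of such points, and one can build a closed-valued, compact, non-usc map on $X=\omega_1+1$ (constant $\{1\}$ below $\omega_1$, equal to $\{0\}$ at $\omega_1$) that satisfies the sequential condition, so the lemma as literally stated is slightly too generous. This is harmless for the paper, where the lemma is only invoked with $X=\CPX$, which is metrizable, but it is a genuine imprecision in the statement rather than in your proof, and your contradiction argument (extracting a convergent subsequence $y_{n_k}$ in the compact set $\mathcal{K}(X)$ and landing in the closed set $Y\setminus A$) is exactly right once sequences suffice to detect closures in $X$.
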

 \begin{theorem}\label{thm:mfg-existence} 
  Assume~\ref{L:levy},~\ref{a:F1},~\ref{a:m},~\ref{a:fg1},~\ref{R1}, \ref{R2}, \ref{R3}.
  Then there exists a classical--very weak solution of \autoref{eq:mfg}.
 \end{theorem}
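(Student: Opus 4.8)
The plan is to apply the Kakutani--Glicksberg--Fan fixed point theorem (\autoref{thm:kakutani}) on the convex set $\mathcal{S} = \CPX$. The set-valued map I would construct is the composition of the ``HJB map'' $m\mapsto u$ and the ``Fokker--Planck map'' $u\mapsto \FPm$. More precisely, for $m\in\CPX$, set $(f,\uT) = \big(\cF(m),\cG(m(T))\big)$; by \ref{R1} there is a bounded classical solution $u$ of \autoref{eq:hjb} with this data, and it is unique by \autoref{thm:hjb-comparison} once we know $\|F'(\fL u)\|_\infty\leq\KHJ$, which is exactly \ref{R3}. Then set $b = F'(\fL u)$, which satisfies \ref{a1':b} (it is continuous and bounded by $\KHJ$ thanks to \ref{a:F1} and \ref{R3}), and let $\mathcal{K}(m) = \FPm$ be the set of very weak solutions of \autoref{eq:fp} with coefficient $b$ and initial datum $m_0$. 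A fixed point $m\in\mathcal{K}(m)$ is precisely a classical--very weak solution of \autoref{eq:mfg}, since for such $m$ the associated $u$ solves the HJB part with the correct coupling and $m$ solves the Fokker--Planck part with $b = F'(\fL u)$.

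First I would verify the hypotheses of \autoref{thm:kakutani}. \emph{Non-emptiness and convexity of values}: $\FPm\neq\emptyset$ by \autoref{thm:fp-existence}, and $\FPm$ is convex since \autoref{eq:fp} is linear. \emph{Compactness of values and of the image}: by \autoref{cor:m-compact}, $\FPm$ is compact; moreover the bounds there are uniform, because $\|b\|_\infty\leq\KHJ$ for all admissible $b$ (by \ref{R3}) and $\|\fL\|_{\text{\textit{LK}}}$ is fixed, so all the sets $\FPm$ arising lie in a single tight, equicontinuous family — hence $\mathcal{K}(\CPX)$ is contained in a compact subset of $\CPX$ by \autoref{lemma:fp-tightness}\autoref{item:compactness}. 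Thus $\mathcal{K}$ is a compact set-valued map with non-empty compact convex values.

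The remaining and main point is \emph{upper-semicontinuity} of $\mathcal{K}$, which by \autoref{lemma:semi-cont} (applicable since $\CPX$ is a complete, hence Hausdorff, metric space and the image lies in a compact metric set) reduces to the sequential statement: if $m_n\to m$ in $\CPX$, $\widehat m_n\to\widehat m$ in $\CPX$, and $\widehat m_n\in\mathcal{K}(m_n)$, then $\widehat m\in\mathcal{K}(m)$. Here I would chain two continuity facts. On the HJB side: by \ref{a:fg1}, $\cF(m_n)\to\cF(m)$ and $\cG(m_n(T))\to\cG(m(T))$ uniformly, so by \autoref{thm:hjb-comparison} the corresponding solutions satisfy $\|u_n - u\|_\infty\to 0$; then by \ref{R2}, $\fL u_n(t)\to\fL u(t)$ uniformly on compact sets for every $t\in\T$, and since $F'\in\Holder{\gamma}\subset C(\R)$ is locally uniformly continuous and the $\fL u_n$ are uniformly bounded (by \ref{R3}, $\|\fL u_n\|_\infty$ stays in a range where $F'$ is uniformly continuous), $b_n = F'(\fL u_n)\to b = F'(\fL u)$ uniformly on compact sets in $\X$ for each $t\in\T$, with all $\|b_n\|_\infty\leq\KHJ$. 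On the Fokker--Planck side: this is exactly the setting of \autoref{lemma:fp-continuity}, which says that any $\CPX$-limit of solutions $\widehat m_n\in\FPm_{n}$ (with $b_n$ converging this way) is a solution in $\FPm$ — i.e.\ $\widehat m\in\mathcal{K}(m)$.

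The hardest step is the upper-semicontinuity verification, and within it the delicate stability result \autoref{lemma:fp-continuity} for the Fokker--Planck equation — this is where the weak ($d_0$) topology and the absence of moment/strict-positivity assumptions matter, and where degeneracy of $b$ is allowed. Once these pieces are assembled, \autoref{thm:kakutani} yields a fixed point $m$, and unwinding the construction gives a classical--very weak solution of \autoref{eq:mfg}, completing the proof.
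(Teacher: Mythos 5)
Your proposal is correct and follows essentially the same route as the paper: the Kakutani--Glicksberg--Fan theorem applied to the composition of the HJB solution map and the Fokker--Planck solution map, with upper-semicontinuity obtained by chaining \ref{a:fg1}, \autoref{thm:hjb-comparison}, \ref{R2}, and \autoref{lemma:fp-continuity}, and compactness from \autoref{lemma:fp-tightness}. The only (harmless) deviations are that the paper works on an explicit compact convex set $\mathcal{S}\subset\CPX$ defined by Lyapunov-moment and time-H\"older bounds (so compactness of the map is automatic) rather than on all of $\CPX$, and that your parenthetical justification for $b_n\to b$ should rest on local boundedness of $\fL u_n$ on compact sets (which follows from \ref{R2} and continuity of $\fL u$) together with local uniform continuity of $F'$, since \ref{R3} bounds $F'(\fL u_n)$, not $\fL u_n$ itself.
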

 \begin{proof*}
  Let $X=\big(C\big(\Tb,\Mb\big),\sup_t\|\cdot\|_0\big)$ (see \autoref{def:rubinstein}).
  We want to find a solution of \autoref{eq:mfg} in $X$ by applying the Kakutani--Glicksberg--Fan
  fixed point theorem.
  To this end, we shall define a map $\mathcal{K}:\mathcal{S}\to2^\mathcal{S}$
  on a certain compact, convex set $\mathcal{S}\subset X$.
  Then the map $\mathcal{K}$ is automatically compact and we may use \autoref{lemma:semi-cont}
  to obtain upper-semicontinuity.
  \smallskip\par
  \begin{step}
  Let $V$ be a Lyapunov function such that $m_0[V],\|\fL V\|_\infty<\infty$ (see \autoref{rem:levy-lyapunov}).
  Define
  \begin{multline*}
   \mathcal{S} = \Big\{\mu \in\CPX : \mu(0)=m_0,\\
   \sup_{t\in\Tb}\mu(t)[V]\leq c_1,
   \quad \sup_{0<|t-s|\leq T}\frac{\|\mu(t)-\mu(s)\|_0}{\sqrt{|t-s|}}\leq c_2\Big\},
  \end{multline*}
  where $m_0$ is fixed and satisfies~\ref{a:m}, and 
  \begin{align*}
  \begin{split}
    c_1=  m_{0}[V] + T\KHJ \|\fL V\|_\infty,\qquad
    c_2=2 + \big(2\sqrt{T}+K_d\big)\KHJ\|\fL\|_{\text{\textit{LK}}}.
    \end{split}
  \end{align*}
  The set $\mathcal{S}$ is clearly convex.
  In addition, $\mathcal{S}$ is compact because of \autoref{prop:pre-compactness},
  the assumed equicontinuity in time, and the Arzel\`a--Ascoli theorem.
  \end{step}
  \begin{step} 
  Take $\mu\in\mathcal{S}$ and let 
   $f=\cF(\mu)$ and
   $\uT = \cG\big(\mu(T)\big)$.
  We define a map $\mathcal{K}_1:\mathcal{S}\to\Cbb$ by $\mathcal{K}_1(\mu)=u$,
  where $u$ is the unique bounded classical solution of \autoref{eq:hjb},
  corresponding to data $(f,\uT)$.
  The map $\mathcal{K}_1$ is well-defined because of~\ref{R1},~\ref{R3}, and \autoref{thm:hjb-viscosity}.
  By~\ref{a:F1} we find that $b=F'(\fL u)$ satisfies~\ref{a1':b}.
  
   We define a set-valued map $\mathcal{K}_2$ by $\mathcal{K}_2(u) = \FPm$,
   where $\FPm$ is the set of very weak solutions of \autoref{eq:fp} corresponding to $b=F'(\fL u)$.
   The set $\FPm\subset \mathcal{S}\subset\CPX$ is convex, compact,
   and non-empty because of \autoref{cor:m-compact} and \autoref{thm:fp-existence}.
    Now we define the fixed point map
   $\mathcal{K}(\mu) = \mathcal{K}_2(\mathcal{K}_1(\mu))= \FPm$.
  Because of its construction,
  $\mathcal{K}:\mathcal{S}\to2^\mathcal{S}$ is a compact map with non-empty compact convex values.
  \end{step}
  \begin{step}\label{step:u-cont}
  It remains to show that the map $\mathcal{K}:\mathcal{S}\to2^\mathcal{S}$ is
  upper-semicontinuous.
  Let $\{\mu_n,\mu\}_{n\in\N}\subset \mathcal{S}$ be such that $\lim\limits_{n\to\infty}\mu_n=\mu$
  and let $\{u_n,u\} = \{\mathcal{K}_1(\mu_n),\mathcal{K}_1(\mu)\}$
  be the corresponding solutions of \autoref{eq:hjb},
  and $\{\FPm_n,\FPm\} = \{\mathcal{K}(\mu_n),\mathcal{K}(\mu)\}$
  be the corresponding sets of solutions of \autoref{eq:fp}.
   
   Since $\lim\limits_{n\to\infty}\mu_n=\mu$, by~\ref{a:fg1}, \autoref{thm:hjb-viscosity}\autoref{item:cp}\footnote{This result can be obtained directly for classical solutions  (under~\ref{R3}) by a straightforward application of the maximum principle property of the L\'evy operator $\fL$.}, and~\ref{R2},
   we obtain $\fL u_n \to \fL u$ uniformly on compact sets in $\X$ for every $t\in\T$.
   Hence, if we let 
    $b_n = F'(\fL u_n)$
    and $b = F'(\fL u)$,
   then by~\ref{a:F1}, $b_n\to b$ uniformly on compact sets in $\X$ for every $t\in\T$.
   Moreover, the functions $b_n$ and $b$ satisfy~\ref{a1':b} and are uniformly bounded, by \ref{R3}.
   
   Consider a sequence $m_n\in\FPm_n$ and suppose it converges to some $\widehat m\in\mathcal{S}$.
   Then we use \autoref{lemma:fp-continuity} to say that $\widehat m\in\FPm$.
   This proves that the map $\mathcal{K}$ is upper-semicontinuous by \autoref{lemma:semi-cont}.
  \end{step}
 \begin{step}
  We now use \autoref{thm:kakutani} to get a fixed point $\widehat m\in\mathcal{S}$ of the map $\mathcal{K}$.
  Because of how $\mathcal{K}$ is defined, we have
   $\widehat m \in\mathcal{K}(\widehat m) = \mathcal{K}_2(\mathcal{K}_1(\widehat m))$.
  Thus there exists $\widehat u = \mathcal{K}_1(\widehat m)$,
  which is a bounded classical solution of \autoref{eq:hjb}
  with $f = \cF(\widehat m)$ and $\uT = \cG(\widehat m(T))$,
  and $\|F'(\fL \widehat u)\|_\infty\leq \KHJ$ by~\ref{R3}.
  Note that~$\widehat m$ is a very weak solution of
  \autoref{eq:fp} with $\widehat m(0)=m_0$ and $b = F'(\fL\widehat u$).
  This, in turn, means that the pair $(\widehat u,\widehat m)$
  is a classical--very weak solution of \autoref{eq:mfg}
  (see \autoref{def:mfg}).
 \vspace{-14pt}\[\ \]\vspace{-\belowdisplayskip}
 \end{step}
 \end{proof*}
 \begin{remark}
 Adding assumption \ref{U2} to \autoref{thm:mfg-existence}, 
 yields singleton-valued maps 
$\mathcal{K}_2:\HJcD\to2^\mathcal{S}$
 and $\mathcal{K}:\mathcal{S}\to2^\mathcal{S}$,
 and hence both are continuous (see~\autoref{step:u-cont}, \autoref{rem:fp-stability}).
 To conclude we may then use the classical Schauder theorem~\cite[\S6~Theorem~3.2]{MR1987179} (a special case of the Kakutani--Glicksberg--Fan theorem, cf.~\autoref{lemma:semi-cont}).
 \end{remark}
 \begin{theorem}\label{thm:mfg-uniqueness}
  Assume~\ref{L:levy},~\ref{a:F1},~\ref{a:F2},~\ref{a:m},~\ref{a:fg2},~\ref{U1}, \ref{U2}.
  Then \autoref{eq:mfg} has at most one solution.
 \end{theorem}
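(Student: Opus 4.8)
The plan is to run the classical Lasry--Lions uniqueness argument, adapted to our setting where $F$ (not $H$) is convex and the coupling signs are as in~\ref{a:fg2}. Suppose $(u_1,m_1)$ and $(u_2,m_2)$ are two classical--very weak solutions of~\autoref{eq:mfg}. Write $v_i=\fL u_i$, $b_i=F'(v_i)$, and set $f_i=\cF(m_i)$, $g_i=\cG(m_i(T))$. The first step is to test the difference of the two Hamilton--Jacobi--Bellman equations against $m_1-m_2$ and the difference of the two Fokker--Planck equations against $u_1-u_2$, then integrate in space and time over $\T\times\X$. Here we must be slightly careful: $u_i$ is only a bounded classical solution and $m_i$ is merely measure-valued, so the pairing $\int_0^T\int_\X (\dt u_i)(m_1-m_2)$ and $\int_0^T\int_\X (u_1-u_2)\,\dt m_i$ must be justified via the very weak formulation~\eqref{eq:fp-weaksolution} applied to (a mollification of) $u_1-u_2$; this is legitimate because $u_i,\dt u_i,\fL u_i\in\Cb$ by~\ref{U1}, so $u_1-u_2$ lies in the admissible class $\mathcal U$ of \autoref{lemma:defn-fp-sol}\autoref{item:D-large}. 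Using $-\dt u_i=F(v_i)+f_i$ and $\dt m_i=\fLs(b_i m_i)$ and combining, the boundary terms at $t=T$ produce $\int_\X(g_1-g_2)(m_1(T)-m_2(T))$ and those at $t=0$ vanish since $m_1(0)=m_2(0)=m_0$.

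After the integration by parts (moving $\fLs$ back onto $u_1-u_2$ via~\eqref{eq:fp-weaksolution}), the standard manipulation yields the identity
\begin{equation*}
 \int_\X (g_1-g_2)(m_1(T)-m_2(T)) + \int_0^T\!\!\int_\X (f_1-f_2)(m_1-m_2)
 = -\int_0^T\!\!\int_\X \Big(\mathcal E_{12}\,m_2 + \mathcal E_{21}\,m_1\Big),
\end{equation*}
where $\mathcal E_{ij}=F(v_i)-F(v_j)-F'(v_j)(v_i-v_j)$. By~\ref{a:F2}, $F$ is convex, so $\mathcal E_{12},\mathcal E_{21}\ge0$; since $m_1,m_2\ge0$, the right-hand side is $\le0$. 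By~\ref{a:fg2} the left-hand side is $\ge0$. Hence both sides are zero, and in particular $\int_0^T\int_\X(\mathcal E_{12}\,m_2+\mathcal E_{21}\,m_1)=0$, forcing $\mathcal E_{12}=0$ $m_2$-a.e.\ and $\mathcal E_{21}=0$ $m_1$-a.e.\ on $\T\times\X$.

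The final step is to upgrade this Bregman-type vanishing into $b_1=b_2$ along the flow, despite the fact that $F$ is only \emph{non-strictly} convex. The key observation (this replaces the usual strict-convexity argument) is that for convex $F$, $\mathcal E_{12}(t,x)=0$ together with $\mathcal E_{21}(t,x)=0$ implies that $F$ is affine on the segment between $v_1(t,x)$ and $v_2(t,x)$, hence $F'(v_1(t,x))=F'(v_2(t,x))$, i.e.\ $b_1(t,x)=b_2(t,x)$ at every such point. Since $\mathcal E_{12}=0$ $m_2$-a.e.\ and $\mathcal E_{21}=0$ $m_1$-a.e., we get $b_1=b_2$ $m_1$-a.e.\ and $m_2$-a.e.; more usefully, defining $b=\int_0^1 F'(sv_1+(1-s)v_2)\,ds\in\cB$ we find $b=b_1=b_2$ on $\supp m_i$, and since $b\in\Cb$ (by~\ref{a:F1} and~\ref{U1}, cf.\ the footnote on $\cB$), both $m_1$ and $m_2$ are very weak solutions of~\autoref{eq:fp} with this \emph{same} coefficient $b$ and the same initial datum $m_0$. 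Now~\ref{U2} applies and gives $m_1=m_2$. Consequently $f_1=f_2$ and $g_1=g_2$, so $u_1$ and $u_2$ solve the \emph{same} problem~\autoref{eq:hjb}, and \autoref{thm:hjb-comparison} (using the uniform bound $\|F'(\fL u_i)\|_\infty\le\KHJ$ available from the classical--very weak solution concept) yields $u_1=u_2$.

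The delicate point — and the only place the argument differs from the textbook Lasry--Lions proof — is the passage from ``$\mathcal E_{12}\,m_2+\mathcal E_{21}\,m_1$ integrates to zero'' to ``$b_1=b_2$ on the supports of the $m_i$'': one must argue that the relevant coefficient appearing in the Fokker--Planck equations is exactly the $b\in\cB$ above and that it agrees with $b_1$ and with $b_2$ precisely where the measures live, so that~\ref{U2} can legitimately be invoked with a \emph{single} coefficient. I expect the bookkeeping in the integration-by-parts step (justifying all pairings for measure-valued $m_i$ and only-classical $u_i$, and tracking the $t=T$ boundary terms) to be the main technical obstacle, while the convexity/monotonicity cancellation itself is short once the identity is in place.
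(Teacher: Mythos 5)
Your proposal follows the paper's proof essentially step for step: the duality pairing justified through the very weak formulation with test function $u_1-u_2\in\mathcal{U}$ (legitimate by \ref{U1}), the sign argument combining convexity of $F$ with the monotonicity in \ref{a:fg2}, the vanishing of the Bregman divergences on the respective supports, the single coefficient $b=\int_0^1F'(sv_1+(1-s)v_2)\,ds\in\cB$, and the appeal to \ref{U2} followed by the comparison principle. One intermediate claim is wrong as written, though it turns out not to be load-bearing: from $\mathcal E_{21}=0$ $m_1$-a.e.\ and $\mathcal E_{12}=0$ $m_2$-a.e.\ you cannot conclude that both vanish at any common point (the supports need not intersect), so ``$b_1=b_2$ $m_1$-a.e.\ and $m_2$-a.e.'' does not follow, and $F'(v_1)=F'(v_2)$ is never established --- nor is it needed. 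What the argument actually requires, and what your ``more usefully'' sentence essentially supplies, is only that the \emph{single} continuous coefficient $b$ satisfies $b=F'(v_i)$ on $\supp m_i$ for each $i$ \emph{separately}: $\mathcal E_{21}=0$ says the difference quotient $\big(F(v_1)-F(v_2)\big)/(v_1-v_2)$ equals $F'(v_1)$, i.e.\ $b=b_1$ on $\supp m_1$, and symmetrically $b=b_2$ on $\supp m_2$; this is exactly what lets \ref{U2} be invoked with one coefficient, and it is the content of the paper's \eqref{eq:supp}--\eqref{eq:expression_b_compact}. A last small point: the bound on $\|F'(\fL u_i)\|_\infty$ needed for \autoref{thm:hjb-comparison} in the final step comes from \ref{U1} together with \ref{a:F1} (so $\fL u_i$ is bounded and $F'$ continuous), not from $\KHJ$, which belongs to \ref{R3} and is not among the hypotheses here.
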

 \begin{proof}
  Suppose $(u_1,m_1)$ and $(u_2,m_2)$ are classical--very weak solutions of \autoref{eq:mfg}
  (see \autoref{def:mfg}), and take 
  $u=u_1-u_2$, and $m=m_1-m_2$.
  To shorten the notation further, let $\fL u_1 = v_1$, $\fL u_2 = v_2$, and $v=v_1-v_2$.
  
  By \autoref{def:mfg}, $u_1,\,u_2$ are bounded classical solutions of \autoref{eq:hjb},
  and by \ref{U1}, $\{\dt u_1,\,\dt u_2,\,\fL u_1,\,\fL u_2\}\subset\Cb$.
  By \ref{a:F1}, $F'(v_1),\,F'(v_2)\in\Cb$,  thus $u\in\mathcal{U}$,
  where $\mathcal{U}$ is defined in \autoref{lemma:defn-fp-sol}\autoref{item:D-large}.
  Further, $m_1,\,m_2$ are very weak solutions of \autoref{eq:fp} and satisfy~\eqref{eq:fp-weaksolution}
  for every $\phi\in\mathcal{U}$  by \autoref{lemma:defn-fp-sol}\autoref{item:D-large}.
  Hence,
  \begin{align}\label{eq:duality}
  \begin{split}
   m&(T)\big[u(T)\big]-m(0)\big[u(0)\big]\\
   &=\big(m_1(T)-m_2(T)\big)\big[u_1(T)-u_2(T)\big]-\big(m_1(0)-m_2(0)\big)\big[u_1(0)-u_2(0)\big]\\
    &= \int_0^T \Big(m_1\big[\dt u + F'(v_1) v\big] - m_2\big[\dt u + F'(v_2) v\big]\Big)(\tau)\,d\tau.
  \end{split}
  \end{align}
  As $m_1(0)=m_2(0) = m_0$, we have 
   $m(0)\big[u(0)\big]=0$
   and, thanks to~\ref{a:fg2}, 
   \begin{align*}
   m(T)[u(T)] = \big(m_1(T)-m_2(T)\big)\big[\cG\big(m_1(T)\big)-\cG\big(m_2(T)\big)\big]\leq 0.
  \end{align*}  
Hence by \eqref{eq:duality} we get 
\begin{align}\label{eq:dual_leseqaul}
    \int_0^T \Big(m_1\big[\dt u + F'(v_1) v\big] - m_2\big[\dt u + F'(v_2) v\big]\Big)(\tau)\,d\tau\leq0.
\end{align}
We further notice that 
   $\dt u + F(v_1)-F(v_2)=\cF(m_2)-\cF(m_1)$.
  Then, by integrating this expression with respect to the measure $m$, we obtain
  \begin{align}\label{eq:equalitym1m2}
      \int_0^T m\,[\dt u + F(v_1)-F(v_2)](\tau) \, d\tau
      = \int_0^T m\, [\cF(m_2)-\cF(m_1)] \, d\tau.
  \end{align}
  From~\ref{a:F2} we know that $F$ is convex, thus
  \begin{align}\label{eq:m1m2ineq}
   F(v_1) - F(v_2) \leq  F'(v_1)\,v\quad\text{and}\quad F(v_1) - F(v_2) \geq  F'(v_2)\,v, 
  \end{align}  
  and since $m_1, m_2 \in\CPX$ are non-negative measures,
  by \eqref{eq:equalitym1m2}, \eqref{eq:m1m2ineq} and~\ref{a:fg2},
  \vspace{-\baselineskip}
  \begin{align}\label{eq:dual_leseqaul2}
   \begin{split}
    \int_0^T m_1\big[\dt u + F'(v_1)\,v\big](\tau)\,d\tau
    -\int_0^T m_2\big[\dt u + F'(v_2)\,v\big](\tau)\,d\tau& \\
    \geq \int_0^T(m_1-m_2)[\cF(m_2)-\cF(m_1)](\tau)\,d\tau& \geq 0.
   \end{split}
  \end{align}
Combining \eqref{eq:dual_leseqaul} and \eqref{eq:dual_leseqaul2}, we find that 
\begin{align*}
    \int_0^T m_1\big[\dt u + F'(v_1)\,v\big](\tau)\,d\tau
      -\int_0^T m_2\big[\dt u + F'(v_2)\,v\big](\tau)\,d\tau =0.
\end{align*}
  Then, taking into account \eqref{eq:equalitym1m2}, we get
  \begin{multline*}
   0 = \int_0^T\int_\X \big(F'(v_1)\,v-F(v_1) + F(v_2)\big)\,m_1(\tau,dx)\,d\tau \\
   + \int_0^T\int_\X \big(F(v_1) - F(v_2)-F'(v_2)\,v\big)\,m_2(\tau,dx)\,d\tau.
  \end{multline*}
  By \eqref{eq:m1m2ineq},
  both functions under the integrals are non-negative and continuous,
  thus in particular
 \begin{align}\label{eq:supp}
    F(v_1) - F(v_2)-F'(v_1)(v_1-v_2)=0\quad\text{on $\supp m_1$},
  \end{align}
  where by $\supp m_1$ we understand the support of $m_1$
  taken as a measure on $\Tb\times\X$.

  Let $(t,x)\in\supp m_1$. If $v_1(t,x)\neq v_2(t,x)$, then by \eqref{eq:supp}
  \begin{align*}
  F'(v_1(t,x)) = \frac{F(v_1(t,x)) - F(v_2(t,x))}{v_1(t,x)-v_2(t,x)}.
  \end{align*}
  This means that the tangent line to the graph of $F$ at $v_1(t,x)$
  and the secant line joining $F(v_1(t,x))$ and $F(v_2(t,x))$ coincide.
  By \ref{a:F1} and \ref{a:F2} both lines also coincide
  with the tangent at $v_2(t,x)$, thus $F'(v_1(t,x))=F'(v_2(t,x))$.
  Of course if $v_1(t,x)=v_2(t,x)$, then $F'(v_1(t,x))=F'(v_2(t,x))$ as well.
  Therefore $m_1$ can be written as a solution of \autoref{eq:fp}
  with $F'(v_2)$ in place of $F'(v_1)$.
  By \ref{U2} we get $m_1=m_2$.
  Then also $u_1 = u_2$ by \autoref{thm:hjb-viscosity}.
 \end{proof}
\end{section}
\appendix
\section{Proofs of some technical results}\label{appendix:comparison}
 \begin{proof}[Outline of proof of \autoref{lemma:regularity}]
  We employ the method of continuity, following the scheme of the proof of \cite[Theorem 13.9.1]{MR3837125}, using the Schauder estimates of \cite[Theorem 1.3]{MR3803717}, and the existence results for linear problems in \cite[Theorem 4]{MR3201992}.
  
  For $s\in[0,1]$ consider the family of problems
  \begin{align*}\tag{$P$\hspace{-2pt}\raisebox{-2pt}{\scriptsize{$s$}}}
    \left\{\begin{aligned}
    -\dt u &= (1-s)\fL u + s F(\fL u)+f,\qquad f\in \HHb{\hd/2\sigma}{\hd}\\
    u(T)&=g.
    \end{aligned}\right.
  \end{align*}
  with regularized initial data $g$.
 Let 
 \begin{multline*}
     S=\{s: P_s\text{ has a classical solution satisfying}\\ \text{the interior $(\tfrac{\alpha}{2\sigma},\alpha)$-regularity estimates}\}\subset[0,1].
 \end{multline*}
  We have $0\in S$ by \cite[Theorem 4]{MR3201992}\footnote{\cite{MR3201992} shows
  well-posedness of a strong solution with the correct spatial regularity uniform in time. Given our assumptions, time regularity can then be obtained from the equation in the standard way.} and $S$ is closed by \cite[Theorem 1.3]{MR3803717}. Next we show that $S$ is open. Let $u_0$ be a solution of problem $P_{s_0}$. Consider the map
 \begin{align*}
 \Psi^s:\HHb{\sigma+\alpha/2\sigma}{2\sigma+\alpha}\to \HHb{\sigma+\alpha/2\sigma}{2\sigma+\alpha}
 \end{align*} 
 given by 
 $\Psi^s(w)=v$, where $v$ is a solution to the linear problem
 \begin{align*}
     \left\{\begin{aligned}
      -\dt v &= (1-s)\fL v +s F'(\fL u_0)\fL v + s\big(F(\fL w)-F'(\fL u_0)\fL w\big) +f,\\
      v(T)&= g.
     \end{aligned}\right.
 \end{align*}
  It is well-defined because of \cite[Theorem 4]{MR3201992}.
  We use a second-order approximation as in \cite[Theorem 13.9.1]{MR3837125},
  and again \cite[Theorem 4]{MR3201992},
  to get that $\Psi^s$ is a self-map on a certain neighbourhood of $u_{0}$. 
  Then we show that $\Psi^s$ is a contraction on this set. 
  The fixed point given by the Banach theorem is a solution of problem $P_s$ for $0<|s_0-s|<\epsilon$. Note that the computations are essentially the same as in \cite{MR3837125} because the problems depend linearly 
  on the time derivatives. 
  
 The case of general data $g$ follows by an approximation argument. 
 \end{proof}
\begin{proof}[Proof of \autoref{prop:lyapunov}]
  We proceed in steps, constructing successive functions
  that accumulate properties required by \autoref{def:lyapunov}
  and are adequately integrable.
  \smallskip\par
  \begin{step*}{Integrability, monotonicity, unboundedness}
  The conclusion of this step is essentially stated in \cite[Example~8.6.5\,(\emph{ii})]{MR2267655}, but a complete proof is lacking and the precise function $v_0$, which we need, cannot be extracted.
 Let 
  \begin{align*}
   v(x) = v_0\big(|x|\big),\qquad\text{where}\qquad v_0(t) = \sup_{m\in\Pi}m\{x: |x|\geq t\}.
  \end{align*}  
  Then \mbox{$v_0:[0,\infty)\to[0,1]$} is non-increasing and $v_0(0) = 1$.
  Because $\Pi$ is tight, we have \mbox{$\lim\limits_{t\to\infty}v_0(t) = 0$}.
  Thus, $-\log(v_0):[0,\infty) \to [0,\infty]$ is non-decreasing,
  $\log(v_0(0))=0$, and \mbox{$\lim\limits_{t\to\infty}-\log(v_0(t)) = \infty$}.
  For $m\in\Pi$, let $\Phi^{m}(\tau) = m\circ v^{-1}\big([0,\tau)\big)$.
  Then,\footnote{Notice that 
  $\big\{x: m\{y:|y|\geq|x|\} <\tau \big\}=\{x:|x|>r_\tau\}$,
  while 
  $\big\{x: m\{y:|y|>|x|\} \leq \tau \big\}=\{x:|x|\geq r_\tau\}$,
  where $r_\tau$ is such that
  $m\{x:|x|> r_\tau\}\leq \tau\leq m\{x:|x|\geq r_\tau\}$.
  If $m$ is absolutely continuous with respect to the Lebesgue measure,
  then the measure $m$ of both sets is equal to $\tau$.
  Choosing the correct inequality in the definition of the function $v_0$
  is essential.}
  \begin{align*}
      \Phi^m(\tau) = m\big(v^{-1}\big([0,\tau)\big)\big) 
      &= m\big\{x: \forall\, \widehat m\in\Pi\quad \widehat m\{y:|y|\geq|x|\} <\tau \big\}\\
      &\leq m\big\{x: m\{y:|y|\geq|x|\} <\tau \big\}\leq \tau.
  \end{align*}
  Integrating \emph{by substitution}~\cite[Theorem~3.6.1]{MR2267655}
  and \emph{by parts}~\cite[Exercise~5.8.112]{MR2267655},\footnote{
  From~\cite[Exercise~5.8.112\,(\emph{i})]{MR2267655} we get 
  $\int_r^1 -\log(\tau)\,d\Phi^m(\tau) = \int_r^1 \frac{\Phi^{m}(\tau)}{\tau}\,d\tau$ for every $r>0$.
  Then we may pass to the limit $r\to0$ by the monotone convergence theorem,
  cf.~\cite[Exercise~5.8.112\,(\emph{iii})]{MR2267655}.}
  \begin{align}\label{eq:log-v}
      \int_\X -\log\big(v(x)\big)\,m(dx) = \int_0^1 -\log(\tau)\,d\Phi^m(\tau)
      = \int_0^1 \frac{\Phi^m(\tau)}{\tau}\,d\tau \leq \int_0^1\, d\tau .
  \end{align}
  \end{step*}
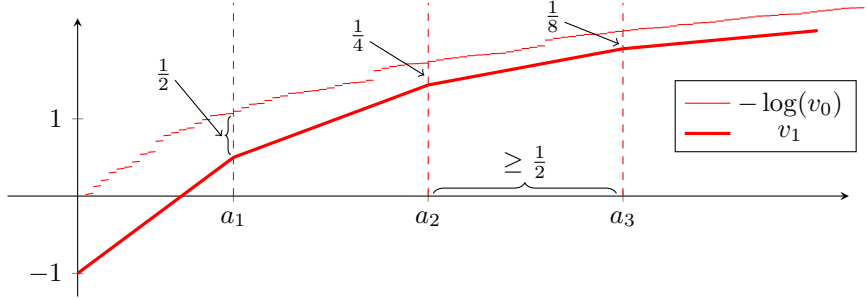
\begin{figure}[t]
 \begin{tikzpicture}
    \pgfmathsetmacro{\yup}{0}; 
    \begin{axis}[axis lines = middle,enlargelimits,clip=false,width=\textwidth,height=15em,ymin=-4,ymax=8,
    ytick={-4,0,4},xtick={0,4,9,14},domain=0:18, xticklabels={$0$,$a_1$,$a_2$,$a_3$},yticklabels={$-1$,$0$,$1$},
    xmax=18,legend style={at={(axis cs:20,6)}}]
    \addplot[domain=0:4,samples=3,red] {3/2*x-4};\addlegendentry{$-\log(v_0)$}
    \addplot[domain=0:4,samples=3,red, very thick] {3/2*x-4};\addlegendentry{$v_1$}
    \foreach \k in {1,...,9} {
     \addplot[domain=\k*0.2:\k*0.2+0.2,color=red,samples=5] {2.6*ln(-3*ln(1/(x/20+1))+\yup+1)};
     \pgfmathparse{\yup+0.3*rnd)}
     \xdef\yup{\pgfmathresult}
    };
    \foreach \k in {10,...,37} {
     \addplot[domain=\k*0.2:\k*0.2+0.2,color=red,samples=5] {2.95*ln(-3*ln(1/(x/20+1))+\yup+1)};
     \pgfmathparse{\yup+0.3*rnd)}
     \xdef\yup{\pgfmathresult}
    };
    \foreach \k in {38,...,59} {
     \addplot[domain=\k*0.2:\k*0.2+0.2,color=red,samples=5] {3.19*ln(-3*ln(1/(x/20+1))+\yup+1)};
     \pgfmathparse{\yup+0.3*rnd)}
     \xdef\yup{\pgfmathresult}
    };
    \foreach \k in {60,...,100} {
     \addplot[domain=\k*0.2:\k*0.2+0.2,color=red,samples=5] {3.3*ln(-3*ln(1/(x/20+1))+\yup+1)};
     \pgfmathparse{\yup+0.3*rnd)}
     \xdef\yup{\pgfmathresult}
    };
    \addplot[domain=4:9,samples=3,red,very thick] {3/4*x-1};
    \addplot[domain=9:14,samples=3,red,very thick] {3/8*x+2.375};
    \addplot[domain=14:19,samples=3,red,very thick] {3/16*x+5};
    \addplot[dashed,samples=3,red] coordinates {(4,0)(4,10)};
    \addplot[dashed,samples=3,red] coordinates {(9,0)(9,10)};
    \addplot[dashed,samples=3,red] coordinates {(14,0)(14,10)};
    \addplot [arrows=->] coordinates{(2.5,6) (3.7,3.1)};
    \addplot [arrows=->] coordinates{(7.5,8) (8.9,6.25)};
    \addplot [arrows=->] coordinates{(12.5,9) (13.9,7.875)};
    \addplot [] (2.2,6.3) node {$\frac12$};
    \addplot [] (7.2,8.3) node {$\frac14$};
    \addplot [] (12.2,9.3) node {$\frac18$};
    \addplot [] (11.5,1.75) node {$\geq\frac12$};
    \addplot [decorate,decoration={brace,amplitude=2pt},xshift=-1pt,yshift=1pt]
coordinates{(4,2) (4,4)};
    \addplot [decorate,decoration={brace,amplitude=4pt},xshift=-1pt,yshift=1pt]
coordinates{(9.2,0) (13.9,0)};
  \end{axis}
 \end{tikzpicture}
 \caption{Comparison of $-\log(v_0)$ and $v_1$.}
 \label{fig:v0v1}
 \end{figure}
  \begin{step*}{Continuity, concavity}\footnote{Concavity serves as an intermediate step
  to obtain subadditivity.}
  For $N\in\N\cup\{\infty\}$ and sequences $\{a_n\}$, $\{b_n\}$ to be fixed later,
  let $v_1:[0,\infty)\to [-1,\infty)$ be the piecewise affine function given by (see~\autoref{fig:v0v1})
  \begin{align*}
      v_1(t) = \sum_{n=0}^N\, l_n(t)\mathbbm{1}_{[a_n,a_{n+1})}(t),
      \quad\text{where}\quad l_n(t) = {2^{-n}}(t-a_n)+b_n.
  \end{align*}
  We set $a_0= 0$.
  For $n\in\N$, when $a_n<\infty$, let $b_{n} = -\log\big(v_0(a_{n})\big)-2^{-n}$ and
  \begin{align*}
      a_{n+1} = \inf A_n,\quad\text{where}\quad A_n=\Big\{t\geq a_n:-\log\big(v_0(t)\big)-l_n(t)
      \leq {2^{-n-1}}\Big\}.
  \end{align*}
  We put $\inf\emptyset = \infty$ and $N = \sup\{n : a_n<\infty\}$.
  Note that for every $n<N+1$,
  \begin{align*}
   -\log\big(v_0(a_{n})\big) -v_1(a_{n}) =-\log\big(v_0(a_{n})\big) -b_n =2^{-n}   
  \end{align*}
  and on the interval $[a_n,a_{n+1}]$,
  \begin{align*}
   -\log(v_0)-v_1 \geq 2^{-{n-1}}\qquad
   \text{(hence $-\log\big(v_0(t)\big)\geq v_1(t)$ for every $t\geq 0$)}.
  \end{align*}
  To verify continuity, take a sequence $\{s_k\}\subset A_n$ such that $\lim\limits_{k\to\infty}s_k= a_{n+1}$.
  Then, because $-\log(v_0)$ is non-decreasing
  and $l_n$ is continuous,
  \begin{align*}
      -\log\big(v_0(a_{n+1})\big) - l_{n}(a_{n+1})
      \leq \liminf_{k\to\infty} \Big(-\log\big(v_0(s_k)\big) - l_{n}(s_k)\Big) \leq 2^{-n-1}.
  \end{align*}
  Thus
   $-\log\big(v_0(a_{n+1})\big) - l_{n}(a_{n+1})= 2^{-n-1}$,
  i.e.~$l_{n+1}(a_{n+1}) = b_{n+1} = l_n(a_{n+1})$, which implies that $v_1$ is continuous.
  Moreover, $a_{n+1}-a_n\geq\frac12$, since
  this distance is the shortest when $\log(v_0)$ is constant on $[a_n,a_{n+1}]$.
  We have $v_1(0)=-1$, $\lim\limits_{t\to\infty}v_1(t) = \infty$,
   and 
   \begin{align*}
   v_1' = \sum_{n=0}^N\, 2^{-n}\mathbbm{1}_{[a_n,a_{n+1})}\qquad
   \text{(a non-increasing function, see \autoref{fig:v1v2})},
   \end{align*} 
   which implies that $v_1$ is concave.
   In addition, $v_1(t)\leq t-1$, hence $v_1(1)\leq 0$.
  \end{step*}
    \begin{figure}[t!]
  \begin{tikzpicture}
    \begin{axis}[axis lines = middle,enlargelimits,clip=false,width=\textwidth,
    height=15em,ymin=0,ymax=2,ytick={0,1/2,1,2},xtick={0,3,4,5,8,9,10,13,14,15},
    domain=0:18, xticklabels={$0$,,$a_1$,,,$a_2$,,,$a_3$},yticklabels={$0$,$\frac14$,$\frac12$,$1$}]
    \addplot[domain=0:4,samples=3,red,dashed] {2};
    \addplot[domain=0:3,samples=3,red] {2};
    \addplot[domain=3:5,samples=100,red] {(1/4)*((x-4)^3-3*(x-4)+6)};
    \addplot[domain=5:8,samples=3,red] {1};
    \addplot[domain=8:10,samples=100,red] {(1/8)*((x-9)^3-3*(x-9)+6)};
    \addplot[domain=10:13,samples=3,red] {1/2};
    \addplot[domain=13:15,samples=100,red] {(1/16)*((x-14)^3-3*(x-14)+6)};
    \addplot[domain=15:18,samples=3,red] {1/4};
    \addplot[domain=4:9,samples=3,red,dashed] {1};
    \addplot[domain=9:14,samples=3,red,dashed] {1/2};
    \addplot[domain=14:18,samples=3,red,dashed] {1/4};
    \addplot[dashed,samples=3,red] coordinates {(4,1)(4,2)};
    \addplot[dashed,samples=3,red] coordinates {(9,1/2)(9,1)};
    \addplot[dashed,samples=3,red] coordinates {(14,1/4)(14,1/2)};
    \addplot [] (4,0.25) node {$\pm\frac18$};
    \addplot [decorate,decoration={brace,amplitude=4pt},yshift=1pt]
coordinates{(3.1,0) (4.9,0)};
    \legend{$v_1'$,$v_2'$}
  \end{axis}
 \end{tikzpicture}
 \caption{Comparison of $v_1'$ and $v_2'$}
 \label{fig:v1v2}
 \end{figure}
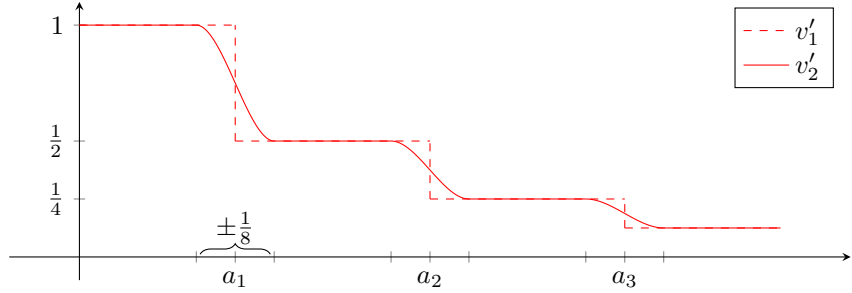
  \begin{step*}{Differentiability}
  Let $p(t)= \frac14(t^3-3t+6)\mathbbm{1}_{[-1,1)}(t)$.
  Then $p$ acts as a smooth transition between values $2$ and $1$
  on the interval $[-1,1]$, with vanishing derivatives at the end points.
  Let $v_2$ be such that $v_2(0)=-1$ and
  (see~\autoref{fig:v1v2})
  \begin{align*}
   v_2'(t) = \mathbbm{1}_{[0,a_{1}-\frac18)}(t)
   +\sum_{n=1}^N 2^{-n}\bigg(p\big(8(t-a_{n})\big)+\mathbbm{1}_{[a_{n}+\frac18,a_{n+1}-\frac18)}(t)\bigg).
  \end{align*}
   Then $v_2\in C^2\big([0,\infty)\big)$, $v_2$ is concave, increasing,
   and $\lim\limits_{t\to\infty}v_2(t)=\infty$.
   Moreover, 
   \begin{align*}\|v_2''\| \leq \sup_{t}\bigg|\frac12\,\frac{d}{dt} p(8t)\bigg| \leq 3.\end{align*}
   Next, we verify that $v_2\leq v_1$.
   Notice that for every $t\in[-1,1]$,
   \begin{align*}
    \int_{-1}^t p(s)\,ds 
    \leq \int_{-1}^t 2\cdot\mathbbm{1}_{[-1,0]}(s) + \mathbbm{1}_{[0,1]}(s)\,ds,
    \quad\text{and}\quad\int_{-1}^1 p(s)\,ds =3.
   \end{align*}
    By suitable scaling and shifting,
    for every $t\in\bigcup\limits_{n=1}^N \big[a_n-\frac18,a_{n}+\frac18\big]$
    we get $v_2(t) \leq v_1(t)$, and $v_2(t)= v_1(t)$ otherwise.
  \end{step*}
  \begin{step*}{Subadditivity, bounds on derivatives}
  Let $V_0 = \frac13 (v_2+1)$.
  Then $V_0:[0,\infty)\to [0,\infty)$ is concave and hence subadditive.
  Moreover, $V_0$ is increasing, $\lim\limits_{t\to\infty}V_0(t)=\infty$,
  and $\|V_0'\|_\infty, \|V_0''\|_\infty \leq 1$.
  This proves that $V(x) = V_0\big(\sqrt{1+|x|^2}\big)$ is a Lyapunov function.
  By subadditivity and monotonicity,
  \begin{align*}
   V_0\big(\sqrt{1+t^2}\big)\leq V_0(t+1)\leq V_0(t)+V_0(1),
  \end{align*}
  hence for every $m\in\Pi$, because $v_2\leq v_1\leq -\log(v_0)$ and by \eqref{eq:log-v},
  \begin{align*}
    0&\leq \int_\X V(x)\,m(dx) \leq V_0(1) + \int_\X V_0\big(|x|\big)\,m(dx) \\
    &\leq  \frac{v_2(1)+1}3+\frac13 - \frac13\int_\X \log(v(x))\,m(dx) \leq 
     \frac{v_1(1)}{3}  +\frac{1}{3}  +\frac{1}{3}  +\frac{1}{3} \leq 1 .
  \end{align*}
  This shows that $V$ is a Lyapunov function such that $m[V]\leq 1$ for every $m\in\Pi$.
\vspace{-14pt}\[\ \]\vspace{-\belowdisplayskip}
\end{step*}
\end{proof}
 \section{The Legendre--Fenchel transform}\label{appendix:legendre}
 For a comprehensive treatment of the Legendre--Fenchel transform we refer to \cite{MR0274683,MR1865628}.
 Below we gather the particular properties of cost functions $L$ needed to derive the model in \autoref{section:stochastics}, and corresponding to Hamiltonians $F$ satisfying \ref{a:F1} and \ref{a:F2}.
 These properties are expected, but in the setting we consider, we could not find the proofs in the literature.
 \begin{proposition}\label{prop:G-F}
 Let $L:[0,\infty)\to\R\cup\{\infty\}$ be a lower-semicontinuous function
 such that $L\not\equiv\infty$
 and define $F(z)= \sup_{\zeta\in[0,\infty)}(z \zeta-L(\zeta))$.\footnote{Taking the supremum over $[0,\infty)$ is consistent with extending $L$ by $L(\zeta)=\infty$ for $\zeta<0$ and taking the supremum over all of $\R$ as is usual.
 Conversely, if $\lim_{z\to-\infty}F(z)\neq\infty$, taking $L(\zeta) = \sup_{z\in \R}\big(\zeta z - F(z) \big)$ results in $L(\zeta)=\infty$ for $\zeta<0$. This is logical since $\zeta$ stands for the time rate (see \autoref{section:stochastics}) and the cost of going back in time should be prohibitive.}
 Then $F$ is convex and non-decreasing.
 In addition,\smallskip
 \begin{enumerate}
 \item\label{item:growth} if $\lim\limits_{\zeta\to\infty}L(\zeta)/\zeta=\infty$,
 then $F$ is finite-valued and locally Lipschitz-continuous;
 \item\label{item:conv} if $L$ is convex and is strictly convex on $\{L\neq\infty\}$,
 then $F$ is differentiable on $\{F\neq\infty\}$
 and $\zeta\mapsto z\zeta-L(\zeta)$ achieves its supremum at $\zeta=F'(z)$;
 \item\label{item:sconv} let $L$ be convex, $\lim\limits_{\zeta\to\infty}L(\zeta)/\zeta=\infty$ and $\partial L$ be the subdifferential of~$L$.
 If for every $\zeta_1,\zeta_2\in [0,\infty)$ and $z_1\in \partial L(\zeta_1)$ there exists $c_{z_1}>0$ such that for every $z_2\in \partial L(\zeta_2)$ satisfying $|z_1-z_2|\leq 1$ we have
 \begin{align*}\hspace{-.5\mathindent}
 (z_1-z_2)(\zeta_1-\zeta_2) \geq c_{z_1} |\zeta_1-\zeta_2|^{1+\frac{1}{\gamma}},\text{\footnotemark}    
 \end{align*}
 \footnotetext{When $\gamma=1$ and $c_{z_1}$ is in fact independent of $z_1$, this corresponds to the usual strong convexity of $L$ (see~\cite[Theorem~D.6.1.2]{MR1865628}); if $\gamma_1<\gamma_2$, the condition with $\gamma_1$ allows for a \emph{flatter} (less  non-affine) function $L$ than the one with $\gamma_2$.}
  then \mbox{$F'\in \Holder{\gamma}(\R)$}.
 \end{enumerate}
 \end{proposition}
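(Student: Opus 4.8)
The plan is to prove each of the three claims in \autoref{prop:G-F} more or less directly from the definition of the Legendre--Fenchel transform. That $F$ is convex is immediate because it is a supremum of the affine functions $z\mapsto z\zeta-L(\zeta)$, and $F$ is non-decreasing because each of those functions is non-decreasing in $z$ on $[0,\infty)$ (here we use that $\zeta\ge0$, which is exactly the effect of restricting the supremum to $[0,\infty)$ — equivalently $F^*(\zeta)=+\infty$ for $\zeta<0$). Since $L\not\equiv\infty$, $F>-\infty$ everywhere. For \autoref{item:growth}, I would first note that $\lim_{\zeta\to\infty}L(\zeta)/\zeta=\infty$ forces, for each fixed $z$, the function $\zeta\mapsto z\zeta-L(\zeta)$ to tend to $-\infty$ as $\zeta\to\infty$, so the supremum is attained on a bounded set and $F(z)<\infty$; a convex finite function on $\R$ is automatically locally Lipschitz, which gives the regularity statement. (One can also read off an explicit local Lipschitz bound from the slopes of the supporting lines.)

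For \autoref{item:conv}, the key point is that a proper, lower-semicontinuous, convex $L$ which is strictly convex on its effective domain has a transform $F$ that is differentiable wherever it is finite, with $F'(z)$ equal to the unique maximiser $\zeta$ of $\zeta\mapsto z\zeta-L(\zeta)$. I would invoke the standard duality between strict convexity of a function and differentiability of its conjugate (e.g.~the results in \cite{MR0274683,MR1865628}): the subdifferential $\partial F(z)$ equals the set of maximisers, strict convexity of $L$ makes this set a singleton, and a convex function with singleton subdifferential at an interior point of its domain is differentiable there. This immediately yields that the supremum in the definition of $F(z)$ is achieved at $\zeta=F'(z)$.

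For \autoref{item:sconv}, the goal is the H\"older bound $F'\in\Holder{\gamma}(\R)$. Under the growth assumption $\lim_{\zeta\to\infty}L(\zeta)/\zeta=\infty$, \autoref{item:growth} gives $F$ finite everywhere and, by convexity of $L$, $F'(z)$ is characterised (as in \autoref{item:conv}, now via the subdifferential inclusion $z\in\partial L(\zeta)\iff \zeta\in\partial F(z)$) as \emph{a} value $\zeta$ with $z\in\partial L(\zeta)$. Take $z_1,z_2\in\R$ with $|z_1-z_2|\le 1$, set $\zeta_i=F'(z_i)$, so $z_i\in\partial L(\zeta_i)$. The hypothesis then gives
\begin{equation*}
 |z_1-z_2|\,|\zeta_1-\zeta_2|\ \ge\ (z_1-z_2)(\zeta_1-\zeta_2)\ \ge\ c_{z_1}\,|\zeta_1-\zeta_2|^{1+\frac1\gamma},
\end{equation*}
hence $|\zeta_1-\zeta_2|^{1/\gamma}\le c_{z_1}^{-1}|z_1-z_2|$, i.e.~$|F'(z_1)-F'(z_2)|\le c_{z_1}^{-\gamma}|z_1-z_2|^{\gamma}$ whenever $|z_1-z_2|\le1$, which is precisely membership in $\Holder{\gamma}(\R)$ in the sense of \autoref{def:holder}. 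The main thing to be careful about — and what I expect to be the only real obstacle — is the handoff between the ``$z\in\partial L(\zeta)$'' formulation of the hypothesis and the ``$\zeta=F'(z)$'' conclusion: one must check that every maximiser $\zeta$ at a point $z$ satisfies $z\in\partial L(\zeta)$ (this is the Fenchel--Young equality case), and that the growth condition guarantees a maximiser exists for every $z$, so that $F'$ is genuinely defined on all of $\R$. Once that dictionary is in place the inequality above is a two-line computation, and the uniformity of the estimate in $z_1$ on the unit scale is exactly what the local constants $c_{z_1}$ provide.
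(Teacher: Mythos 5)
Your proposal is correct and follows essentially the same route as the paper: convexity and monotonicity from the supremum of affine non-decreasing functions, finiteness and local Lipschitz continuity from the superlinear growth of $L$, differentiability of $F$ via the standard strict-convexity/differentiability duality for conjugates, and the H\"older bound for $F'$ from the two-line estimate $|z_1-z_2|\,|\zeta_1-\zeta_2|\ge c_{z_1}|\zeta_1-\zeta_2|^{1+1/\gamma}$ after identifying $\zeta_i=F'(z_i)$ through the subdifferential inversion $z\in\partial L(\zeta)\Leftrightarrow\zeta\in\partial F(z)$. The ``dictionary'' step you flag as the only delicate point is exactly what the paper delegates to \cite[Theorem~23.5]{MR0274683}, so no gap remains.
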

 \begin{proof}
  The function $F$ is convex as a supremum of convex (affine) functions.
  For $\zeta,h\geq 0$ and $z\in\R$ we have $(z+h)\zeta-L(\zeta) \geq z \zeta-L(\zeta)$ and thus 
  \begin{align*}
      F(z+h)=\sup_{\zeta\in[0,\infty)}\big((z+h)\zeta-L(\zeta)\big)
      \geq \sup_{\zeta\in[0,\infty)}\big(z \zeta-L(\zeta)\big) = F(z).
  \end{align*}
  \begin{part*}
  Because $\lim\limits_{\zeta\to\infty}L(\zeta)/\zeta=\infty$,
  for every $z\in\R$,
      $\lim\limits_{\zeta\to\infty}\big(z-{L(\zeta)}/{\zeta}\big)\zeta = -\infty$.
  Since $L$ is lower-semicontinuous and $L\not\equiv\infty$,
  there exists $\zeta_0<\infty$ such that
  \begin{align*}
      L(\zeta_0)<\infty\qquad\text{and}\qquad
      \sup_{\zeta\in[0,\infty)}\Big(\big(z-\tfrac{L(\zeta)}{\zeta}\big)\zeta\Big) = z \zeta_0-L(\zeta_0).
  \end{align*} 
  As a convex function with finite values, $F$ is then locally Lipschitz-continuous.
  \end{part*}
  \begin{part*} Since $L$ is lower-semicontinuous,
  the statement follows from \cite[Theorem~23.5, Corollary~23.5.1, Theorem~26.3, page~52]{MR0274683}.
  \end{part*}
  \begin{part*}
   Note that \!\autoref{item:sconv} implies \!\autoref{item:growth} and \!\autoref{item:conv}
   (cf.~\cite[Theorem~D.6.1.2]{MR1865628})
   hence $F$ has finite values on $\R$ and $F'$ exists everywhere.
   If $z_i\in \partial L(\zeta_i)$, then $\zeta_i = F'(z_i)$ by \cite[Theorem~23.5]{MR0274683}.
   For $|z_1-z_2|\leq 1$ we thus have
   \begin{align*}
     |z_1-z_2||F'(z_1)-F'(z_2)| \geq c_{z_1} |F'(z_1)-F'(z_2)|^{1+\frac{1}{\gamma}}.
   \end{align*}
   which gives us $F'\in \Holder{\gamma}(\R)$ (see \eqref{eq:holder} in \autoref{def:holder}).
   \vspace{-14pt}\[\ \]\vspace{-\belowdisplayskip}
  \end{part*}
 \end{proof}
 \section*{Acknowledgements}
IC  was supported by the INSPIRE faculty fellowship (IFA22-MA187). ERJ received funding from the Research Council of Norway under Grant Agreement No. 325114 “IMod. Partial differential equations, statistics and data: An interdisciplinary approach to data-based modelling”.
 MK was supported by the Polish NCN grant 2016/23/B/ST1/00434 and Croatian Science Foundation grant IP-2018-01-2449.  
 The main part of the research behind this paper was conducted when  IC and MK were
 fellows of the ERCIM Alain Bensoussan Programme at NTNU.

 \bibliographystyle{siam}
 \let\OLDthebibliography\thebibliography
 \renewcommand\thebibliography[1]{
  \OLDthebibliography{#1}
  \setlength{\itemsep}{0pt plus 0.2ex}
 }
 \bibliography{MFG}
\end{document}